\documentclass[12pt,a4paper]{amsart}


\usepackage[pagebackref]{hyperref}
\hypersetup{colorlinks=true,urlcolor=blue,citecolor=blue,linkcolor=blue}
\usepackage[nameinlink]{cleveref}
\usepackage{fullpage}

\usepackage{colonequals}
\usepackage{bbm}

\usepackage{amssymb,amsxtra,stmaryrd}

\newenvironment{enumalph}
{\begin{enumerate}}
{\end{enumerate}}

\newenvironment{enumroman}
{\begin{enumerate}}
{\end{enumerate}}

\newenvironment{enumalg}
{\begin{enumerate}}
{\end{enumerate}}

\newenvironment{enumalgalph}
{\begin{enumerate}}
{\end{enumerate}}

\usepackage[all]{xy}

\newcommand{\psmod}[1]{~(\textup{\text{mod}}~{#1})}

\def\fraka{\mathfrak{a}}
\def\frakb{\mathfrak{b}}
\def\frakc{\mathfrak{c}}
\def\frakp{\mathfrak{p}}
\def\frakq{\mathfrak{q}}
\def\frakn{\mathfrak{n}}

\def\frakN{\mathfrak{N}}
\def\frakM{\mathfrak{M}}

\def\frakD{\mathfrak{D}}
\def\sM{M}

\def\hatx#1{\widehat{#1}^\times}

\def\set#1{\left\{{\def\st{\;:\;}#1}\right\}}
\def\<#1>{\left\langle{#1}\right\rangle}

\newcommand{\defi}[1]{{{\fontfamily{lmss}\selectfont \textit{#1}}}}
\newcommand{\abs}[1]{\lvert {#1} \rvert}

\newcommand{\C}{\mathbb{C}}

\newcommand{\F}{\mathbb{F}}

\newcommand{\HH}{\mathbb{H}}

\newcommand{\Q}{\mathbb{Q}}

\newcommand{\Z}{\mathbb{Z}}

\newcommand{\alphahat}{\widehat{\alpha}}
\newcommand{\betahat}{\widehat{\beta}}
\newcommand{\pihat}{\widehat{\pi}}
\newcommand{\Fhat}{\widehat{F}}
\newcommand{\Ghat}{\widehat{G}}
\newcommand{\Khat}{\widehat{K}}

\newcommand{\uhat}{\widehat{u}}

\newcommand{\calO}{O}  
\newcommand{\calOhat}{\widehat{\calO}}
\newcommand{\Bhat}{\widehat{B}}

\newcommand{\Snew}{S^{\textup{new}}}  

\newcommand{\tbigwedge}{\smash{\raisebox{0.2ex}{\ensuremath{\textstyle{\bigwedge}}}}}

\DeclareMathOperator{\adj}{adj}
\DeclareMathOperator{\AL}{AL}
\DeclareMathOperator{\Aut}{Aut}
\DeclareMathOperator{\Cl}{Cl}
\DeclareMathOperator{\Clf}{Clf}
\DeclareMathOperator{\Cls}{Cls}

\DeclareMathOperator{\opchar}{char}
\DeclareMathOperator{\End}{End}
\DeclareMathOperator{\disc}{disc}
\DeclareMathOperator{\discrd}{discrd}
\DeclareMathOperator{\id}{id}

\DeclareMathOperator{\Frac}{Frac}

\DeclareMathOperator{\Gen}{Gen}
\DeclareMathOperator{\GO}{GO}

\DeclareMathOperator{\Map}{Map}
\DeclareMathOperator{\GL}{GL}

\DeclareMathOperator{\Pic}{Pic}
\DeclareMathOperator{\diag}{diag}
\DeclareMathOperator{\SO}{SO}
\DeclareMathOperator{\OO}{O}
\DeclareMathOperator{\Orth}{O}

\DeclareMathOperator{\M}{M}
\DeclareMathOperator{\Ten}{Ten}

\DeclareMathOperator{\nrd}{nrd}

\DeclareMathOperator{\ord}{ord}

\DeclareMathOperator{\rad}{rad}

\DeclareMathOperator{\SL}{SL}

\DeclareMathOperator{\Sym}{Sym}
\DeclareMathOperator{\tr}{tr}

\DeclareMathOperator{\Typ}{Typ}

\DeclareMathOperator{\Rad}{rad}

\newcommand{\bdG}{\mathbf G}

\DeclareMathOperator{\Nm}{Nm}

\newcommand{\quat}[2]{\displaystyle{\biggl(\frac{#1}{#2}\biggr)}}

\numberwithin{equation}{section}
\newtheorem{theorem}[equation]{Theorem}
\newtheorem{thm}[equation]{Theorem}
\newtheorem{alg}[equation]{Algorithm}
\newtheorem{lemma}[equation]{Lemma}
\newtheorem{lem}[equation]{Lemma}
\newtheorem{proposition}[equation]{Proposition}
\newtheorem{prop}[equation]{Proposition}
\newtheorem{cor}[equation]{Corollary}

\newtheorem*{theorem*}{Theorem}

\theoremstyle{definition}
\newtheorem{defn}[equation]{Definition}

\theoremstyle{remark}
\newtheorem{remark}[equation]{Remark}
\newtheorem{rmk}[equation]{Remark}
\newtheorem{exm}[equation]{Example}

\usepackage{color}

\newcommand{\eps}{\varepsilon}

\newcommand{\Lambdahat}{\widehat{\Lambda}}

\newcommand{\tprodprime}[1]{\textstyle{\prod^{\prime}_{#1}}}

\title[Computing ternary orthogonal modular forms]{Computing Hilbert modular forms \\ as orthogonal modular forms}
\author{Jeffery Hein}
\address{}
\email{}

\author{Gonzalo Tornar\'ia} 
\address{Universidad de la Rep\'ublica, Montevideo, Uruguay}
\email{tornaria@cmat.edu.uy}

\author{John Voight}
\address{Department of Mathematics,
  Dartmouth College, 6188 Kemeny Hall, Hanover, NH 03755, USA}
\email{jvoight@gmail.com}

\begin{document}

\begin{abstract}
    We show how to efficiently compute Hilbert modular forms as orthogonal
    modular forms, generalizing and expanding upon the method of Birch.  
\end{abstract}

\maketitle

\setcounter{tocdepth}{1} 
\tableofcontents


\section{Introduction}

\subsection*{Motivation}

Algorithms for the efficient computation of classical modular forms have applications in many areas in mathematics, and consequently their study and implementation remains a topic of enduring interest \cite{11authors}.  In 1991, Birch \cite{Birch} provided such an algorithm based on the Hecke action on classes of ternary quadratic forms.  Of his method, Birch says \cite[p.~204, p.~191]{Birch}: 
\begin{quote}
[T]here is a great deal of interesting information to be calculated; since the program is very fast, it is possible for anyone who owns it to generate interesting numbers much faster than it is possible to read them.  ...  

[However,]\ this attempt ...\ has so far failed in two ways: first, it usually gives only half the information needed, and, second, when the level is not square-free it gives even less information.  At least the program is very fast!
\end{quote}
Birch's algorithm computes the \emph{Anzahlmatrizen} of Eichler \cite[\S 17]{Eichler}, which were studied in their relation to Brandt matrices by Ponomarev \cite{Ponomarev} and Schulze-Pillot \cite[\S 2, Lemma 1]{Schulze-Pillot}. 

Birch's approach was first extended by Tornar\'ia \cite{Tornaria-thesis} in his PhD thesis,
where the notion of equivalence of quadratic lattices is refined to the notion of $\Theta$-equivalence. In this way, the Hecke module is
enlarged to obtain the missing information in the squarefree
case, although there is no statement or proof of what
eigenforms are constructed in general.  In his PhD thesis, Hein \cite{Hein-thesis} generalized Birch's construction to work over a totally real field and interpreted the modular forms obtained using the construction of the even Clifford algebra.

In this paper, we combine these approaches and exhibit an algorithm that simultaneously addresses both of Birch's issues, generalizes to Hilbert modular forms, and is still very fast: it gives as output all Hilbert modular forms of even weight and trivial character \emph{except} in the case where the totally real base field has odd degree and the level is a square.  

\subsection*{Results}

Most of the effort in this paper is theoretical, but we begin with the motivating and recognizable algorithmic application.  We consider the space of Hilbert cuspforms $S_k(\widehat{\Gamma}_0(\frakN))$ with trivial central character, specified by:
\begin{itemize}
\item a \defi{base field}, a totally real field $F$ with ring of integers $R \colonequals \Z_F$;
\item an even \defi{weight}, a vector $k=(k_v)_{v\mid\infty} \in (2\Z_{\geq 1})^{[F:\Q]}$; and
\item a \defi{level}, a nonzero ideal $\frakN \subseteq R$.
\end{itemize}
We abbreviate $\abs{k} \colonequals \sum_{v} k_v$.  For full definitions, see \cref{sec:hmf}.  

The space $S_k(\widehat{\Gamma}_0(\frakN))$ comes equipped with an action by Hecke operators $T_\frakn$ indexed by nonzero ideals $\frakn \subseteq \Z_F$ with $\frakn$ coprime to $\frakN$.  The Hecke operators act semisimply and pairwise commute.  
The space $S_k(\widehat{\Gamma}_0(\frakN))$ further comes equipped with degeneracy operators $S_k(\widehat{\Gamma}_0(\frakN/\frakp)) \rightrightarrows S_k(\widehat{\Gamma}_0(\frakN))$ for all primes $\frakp \mid \frakN$, commuting with the Hecke operators.  Let $\Snew_k(\widehat{\Gamma}_0(\frakN))$ be the orthogonal complement of the images of these operators under the Petersson inner product, inheriting a Hecke action.
The space $\Snew_k(\widehat{\Gamma}_0(\frakN))$ is furthermore equipped with Atkin--Lehner involutions $W_{\frakq}$ for each prime power divisor $\frakq=\frakp^e \parallel \frakN$, again commuting with the Hecke operators by the theory of newforms.  For a \defi{sign vector} $\eps \in \prod_{\frakp \mid \frakN} \{\pm 1\}$, let
\begin{equation} 
\Snew_k(\widehat{\Gamma}_0(\frakN))^{\eps} \colonequals \{f \in \Snew_k(\widehat{\Gamma}_0(\frakN)) : W_\frakq f = \eps_\frakp f\} 
\end{equation}
be the subspace of forms with Atkin--Lehner eigenvalues matching the signs in $\eps$.  Then
\begin{equation} 
\Snew_k(\widehat{\Gamma}_0(\frakN)) = \bigoplus_{\eps} \Snew_k(\widehat{\Gamma}_0(\frakN))^{\eps}. 
\end{equation}


Our main algorithmic result is as follows.  

\begin{theorem} \label{thm:mainthm}
There exists an explicit algorithm that, given as input 
\begin{center}
a base field $F$, an even weight $k$, and a factored level $\frakN=\prod_{i=1}^r \frakp_i^{e_i}$ \\
such that $[F:\Q]$ is even or $\frakN$ is nonsquare, \\ 
and a sign vector $\eps$, 
\end{center}
computes as output the new space $\Snew_k(\widehat{\Gamma}_0(\frakN))^\eps$ 
as a Hecke module.  

If $F$ and $\frakN$ are fixed, then this algorithm takes $\widetilde{O}(d^2 \Nm(\frakp))$ bit operations to compute the Hecke operator $T_\frakp$, where $d \colonequals \dim_\C \Snew_k(\widehat{\Gamma}_0(\frakN))^\eps$.  
\end{theorem}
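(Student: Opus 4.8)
The plan is to realize $\Snew_k(\widehat{\Gamma}_0(\frakN))^{\eps}$ as a space of \emph{orthogonal modular forms} attached to a ternary quadratic lattice over $R$, and then to compute the Hecke action on the orthogonal side by enumerating Kneser $\frakp$-neighbours. All three ingredients — building the lattice, enumerating its genus, and computing neighbours — will be made effective, yielding the explicit algorithm.

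First I would fix the arithmetic input via the even Clifford algebra. The even Clifford algebra of a nondegenerate ternary quadratic space over $F$ is a quaternion algebra, and refining from spaces to lattices, orders $\calO$ in a quaternion algebra $B/F$ correspond to ternary quadratic lattices $\Lambda$ (concretely, $\Lambda$ is the trace-zero part of $\calO$ equipped with a rescaling of the reduced norm form). Using the hypothesis that $[F:\Q]$ is even or $\frakN$ is nonsquare, I would choose $B$ totally definite and choose $\calO \subseteq B$ of appropriate level so that the pair $(B,\calO)$ encodes $\frakN$ together with the sign pattern $\eps$ (ramified primes of $B$ forcing the corresponding Atkin--Lehner signs, the remaining level data recorded by $\calO$); the even weight $k$ is incorporated by tensoring the space of orthogonal modular forms with the local system $\bigotimes_{v \mid \infty} \Sym^{k_v - 2}$ of the standard representation of the special orthogonal group. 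The central claim — the content of the body of the paper — is that this dictionary, together with the Jacquet--Langlands correspondence, identifies $\Snew_k(\widehat{\Gamma}_0(\frakN))^{\eps}$ Hecke-equivariantly with the $\eps$-isotypic component of the orthogonal modular forms on the genus of $\Lambda$. The passage from isometry to $\Theta$-equivalence — equivalently, recording the action of the finite $2$-group attached to the discriminant module of $\Lambda$ — is exactly what splits the orthogonal space according to Atkin--Lehner characters and repairs Birch's ``half the information'' defect, while non-squarefree $\frakN$ is accommodated by allowing $\calO$ to be non-maximal.

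Next I would carry out the computation and bound its cost. The genus of $\Lambda$ has finitely many isometry classes $[\Lambda_1],\dots,[\Lambda_h]$, which I would enumerate by a neighbour search seeded at $\Lambda$ and certify complete using the mass formula; passing to the $\eps$-component produces an explicit basis for the $d$-dimensional target. For a prime $\frakp$ coprime to $\frakN$, the matrix of $T_\frakp$ is assembled one column per class $[\Lambda_i]$: reduce the form on $\Lambda_i$ modulo $\frakp$, enumerate its isotropic $\F_\frakp$-lines (there are $\Nm(\frakp)+1$, the points of a smooth conic), build the corresponding $\frakp$-neighbour lattices, $R$-reduce each, decide which class $[\Lambda_j]$ it is isometric to, and accumulate the contribution weighted by the value of the local system on the connecting isometry. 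With $F$ and $\frakN$ fixed, $h$, the reduced Gram matrices, and the cost of a single isometry test are all bounded independently of $\frakp$; reducing mod $\frakp$ and listing the $\Nm(\frakp)+1$ isotropic lines costs $\widetilde{O}(\Nm(\frakp))$ bit operations, constructing and reducing each neighbour costs $\widetilde{O}(1)$, and matching it against the $d$ stored classes costs $\widetilde{O}(d)$, for a total of $\widetilde{O}(d^2 \Nm(\frakp))$ over the $d$ seed classes — which also absorbs the $O(d^2)$ cost of writing down the output matrix.

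I expect the main obstacle to be the identification in the second paragraph: making the even Clifford/theta dictionary precise and Hecke-equivariant in the stated generality — arbitrary even weight, arbitrary level including non-squarefree, and with the $\Theta$-equivalence refinement matched exactly to the Atkin--Lehner characters — and, in particular, verifying that the resulting orthogonal space has dimension exactly $d$ with no spurious contributions (ruling out anything outside the cuspidal new subspace, and handling the low-weight and Eisenstein edge cases). By contrast, the neighbour enumeration and the complexity bookkeeping of the third paragraph are comparatively routine once this correspondence is in hand.
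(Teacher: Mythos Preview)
Your proposal is correct and takes essentially the same approach as the paper. One point of precision worth flagging: the sign vector $\eps$ is not encoded in the choice of $(B,\calO)$ as your second paragraph suggests, but rather via a character on the orthogonal side --- the paper's specific device is the \emph{radical character} $\nu_\frakM \colon \SO(\widehat{\Lambda}) \to \{\pm 1\}$ (defined via the action on the rank-one module $\Rad(\Lambda_\frakp)/2\frakN\Lambda_\frakp$), which is shown to pair perfectly with the Atkin--Lehner group $\AL(\widehat{\calO})$ under the even Clifford map; this is the paper's refinement of the $\Theta$-equivalence you cite, and is exactly the ``main obstacle'' you correctly anticipated.
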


To compute $\Snew_k(\widehat{\Gamma}_0(\frakN))^\eps$ \emph{as a Hecke module}, we mean to return for each ideal $\frakn$ coprime to $\frakN$ a matrix $[T_\frakn]$ representing the action of the Hecke operator $T_\frakn$ (with respect to a consistent choice of basis for the space).  

At least for $F=\Q$ \cite{Martin-sign} (and likely also true more generally), the Atkin--Lehner operators cut up the space of newforms into subspaces of approximately the same size, giving the expectation $\dim_\C \Snew_k(\widehat{\Gamma}_0(\frakN))^\eps \asymp 2^{-r} \Nm(\frakN)^{\abs{k-1}}$ as $\Nm(\frakN) \to \infty$ by the mass formula.  After a pre\-computation (to set up the lattice), the running time becomes $\widetilde{O}(d\Nm(\frakp))$, so approximately linear in the dimension.  To compute the systems of Hecke eigenvalues on $\Snew_k(\widehat{\Gamma}_0(\frakN))^\eps$, we could continue with techniques from linear algebra---our setup is especially well suited for this, as the matrices we produce are sparse.  

The algorithm provided in \Cref{thm:mainthm} uses orthogonal modular forms on a ternary quadratic space and passing through quaternionic modular forms.  We succeed in computing the forms that Birch did not find by adding a character: for each sign vector $\varepsilon$, we define a \emph{radical character} $\nu_{\varepsilon}$ in \eqref{eqn:radicalchar}, 
and we consider orthogonal modular forms that transform according to this character. 

To explain this in more detail, let $V$ be a totally positive definite ternary quadratic space over $F$.  Attached to $V$ is its even Clifford algebra $B \colonequals \Clf^0(V)$; let $\mathfrak{D}$ be its discriminant, the product of the ramified primes.  Let $\Lambda \subset V$ be an integral $R$-lattice such that $\Lambda^\sharp/\Lambda$ is cyclic.
Then the even Clifford algebra $\calO \colonequals \Clf^0(\Lambda) \subseteq B$ of $\Lambda$ is a locally residually unramified quaternion $R$-order.  

Our main result, underlying \Cref{thm:mainthm} (proven as \Cref{thm:mainthmo}), is the following.  

\begin{thm} \label{thm:equiv}
For each sign vector $\varepsilon$, there is a Hecke-equivariant bijection 
\begin{equation}
S_k(\SO(\widehat{\Lambda}), \nu_\varepsilon) \xrightarrow{\sim} S_k(\widehat{\calO})^{\varepsilon}
\end{equation}
between the space of cuspidal orthogonal modular forms for $\Lambda$ with weight $k$ and character $\nu_\varepsilon$ and the space of quaternionic cusp forms on $\calO$ of weight $k$ with Atkin--Lehner eigenvalues $\varepsilon$.
\end{thm}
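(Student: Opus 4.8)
The plan is to deduce the statement from the exceptional isomorphism relating the special orthogonal group of a ternary quadratic space to the projective unit group of a quaternion algebra, refined integrally by the even Clifford functor. First I would recall that the assignment $\Lambda\mapsto\calO=\Clf^0(\Lambda)$, together with its reverse recovering $\Lambda$ (up to similarity) from $\calO$, induces an isomorphism $\SO(V)\xrightarrow{\sim}B^\times/F^\times$ of algebraic groups over $F$ compatible with all completions; since $V$ is totally positive definite, $B$ is totally definite, $\SO(V)(F\otimes\R)\cong(\HH^\times/\R^\times)^{[F:\Q]}$, and the hypothesis that $k$ is \emph{even} is exactly what lets the weight-$k$ representation descend from $\SU(2)^{[F:\Q]}$ to this quotient, so the archimedean weight data on the two sides agree. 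The essential input at the finite places is that under this isomorphism the stabilizer group $\SO(\widehat\Lambda)$ of $\widehat\Lambda$ corresponds to $N_{\Bhat^\times}(\widehat\calO)/\Fhat^\times$, where $N_{\Bhat^\times}(\widehat\calO)$ is the \emph{full} normalizer of $\widehat\calO$ in $\Bhat^\times$ — a conjugation preserves $\widehat\calO$ precisely when the corresponding special isometry preserves $\widehat\Lambda$ — rather than merely $\Fhat^\times\widehat\calO^\times$. All of this is developed in \cref{sec:hmf} and the preceding material, following Hein \cite{Hein-thesis}.

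Next I would analyze that normalizer. The hypothesis that $\Lambda^\sharp/\Lambda$ is cyclic guarantees (as recalled in the excerpt) that $\calO$ is locally residually unramified, and for such an order one checks prime by prime that $N_{B_\frakp^\times}(\calO_\frakp)/(F_\frakp^\times\calO_\frakp^\times)$ is generated by an Atkin--Lehner uniformizer and isomorphic to $\{\pm1\}$ when $\frakp\mid\frakN$, and is trivial otherwise. Hence
\begin{equation*}
N_{\Bhat^\times}(\widehat\calO)\big/\bigl(\Fhat^\times\widehat\calO^\times\bigr)\;\cong\;\prod_{\frakq=\frakp^{e_\frakp}\,\parallel\,\frakN}\<w_\frakq>\;\cong\;\prod_{\frakp\mid\frakN}\{\pm1\},
\end{equation*}
which is exactly the Atkin--Lehner group $W$ acting on the space $S_k(\widehat\calO)$ of weight-$k$ quaternionic modular forms on $\calO$. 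Thus $N_{\Bhat^\times}(\widehat\calO)$ acts on $S_k(\widehat\calO)$ through its quotient $W$, i.e.\ via the operators $W_\frakq$.

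With these identifications, unwinding the definition of an orthogonal modular form with character identifies $S_k(\SO(\widehat\Lambda),\nu_\varepsilon)$ with the subspace of $S_k(\widehat\calO)$ on which $N_{\Bhat^\times}(\widehat\calO)$ acts by the character obtained by pulling $\nu_\varepsilon$ back along $\SO(\widehat\Lambda)\cong N_{\Bhat^\times}(\widehat\calO)/\Fhat^\times$. It then remains to compute, locally, that this pulled-back character of $W$ sends $w_\frakq\mapsto\varepsilon_\frakp$ — a prime-by-prime trace of how $\Clf^0$ interacts with the Atkin--Lehner uniformizers and the filtration on $\Lambda^\sharp/\Lambda$, drawing on the explicit shape of the radical character in \eqref{eqn:radicalchar}. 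Granting this, $S_k(\SO(\widehat\Lambda),\nu_\varepsilon)$ maps bijectively onto $S_k(\widehat\calO)^\varepsilon$; matching the cuspidality conditions (each cutting out the orthogonal complement of the Eisenstein/constant part, preserved because the bijection respects the natural pairing) completes the identification of the spaces. Hecke-equivariance then follows because for every $\frakp\nmid\frakN$ the entire package ($V,\Lambda,B,\calO$ and the isomorphism $\SO(V)\cong B^\times/F^\times$) has good reduction, and the orthogonal operator $T_\frakp$ is given by the same $\frakp$-neighbour/double-coset recipe that defines $T_\frakp$ on $S_k(\widehat\calO)$ — a compatibility going back to Birch \cite{Birch} and made precise over $F$ in \cite{Hein-thesis}.

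The hard part will be the local computation above: verifying that the explicitly defined radical character $\nu_\varepsilon$ corresponds to the Atkin--Lehner character $\varepsilon$ forces a careful case analysis of the even Clifford algebra of $\Lambda$ at every prime dividing $\frakN$ and $\frakD$ — in particular at the ramified primes, where $V$ is anisotropic and $\calO_\frakp$ is maximal, which also need their own check for the Hecke compatibility since there is no $\frakp$-neighbour picture there — and it is precisely here that the cyclicity of $\Lambda^\sharp/\Lambda$ and the classification of locally residually unramified orders must be used in full. A secondary point requiring care is the bookkeeping when the relevant class number exceeds one, so that the objects involved are functions on a finite set of double cosets rather than on a single quotient group.
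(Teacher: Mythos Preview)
Your proposal is correct and follows essentially the same route as the paper: the even Clifford functor gives $\SO(\widehat\Lambda)\cong N_{\widehat B^\times}(\widehat\calO)/\widehat F^\times$ (\Cref{thm:Heckeequiv} via \Cref{thm:evencliffbij}), the local normalizer computation yields the Atkin--Lehner group (\Cref{prop:alltheAL}), and the ``hard part'' you isolate---matching $\nu_\varepsilon$ to the Atkin--Lehner character prime by prime---is exactly \Cref{prop:A21}, whence \Cref{cor:ALinterp}. One small correction: the Hecke operators $T_\frakp$ in play are only those with $\frakp\nmid\frakN$, so no separate Hecke-compatibility check is needed at ramified primes, and $\calO_\frakp$ there is residually inert but not necessarily maximal.
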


When $\calO$ is a suitable quaternion order, the Eichler--Shimizu--Jacquet--Langlands correspondence (recalled in \Cref{thm:ESJL}) then produces a final Hecke-equivariant bijection between the space of quaternionic forms and a space of Hilbert cusp forms.  (Restricting orders in this way does not lose any newforms, and it keeps the newform theory quite simple.)  

\subsection*{Discussion}

The main result in \Cref{thm:equiv} could also be understood through analytic means via theta series or a Shimura correspondence for Hilbert modular forms.  This method of proof is quite challenging, requiring half-integral weight forms and twists to ensure nonvanishing.  By contrast, the proof using the even Clifford algebra is completely transparent---there is a natural bijection between the class set of $\Lambda$ and the type set of its even Clifford order $\calO$, and so the corresponding spaces of functions taking values in a representation are canonically isomorphic, equivariant with respect to the Hecke operators.  To highlight this simplicity, we present an overview of the proof in the case of $F=\Q$ in \ref{sec:classical}.  A key innovation in this paper is the use of the radical character allowing us to recover all forms, not just those invariant by the Atkin--Lehner operators.  

There are also other methods for computing with Hilbert modular forms, including quaternionic methods with either a definite or indefinite quaternion algebra: for an overview, see Demb\'el\'e--Voight \cite{DV}.  However, there are two key advantages of the approach using ternary orthogonal modular forms, both implying that we may work in smaller-dimensional spaces and therefore save substantially in the cost of linear algebra operations.
\begin{enumerate}
\item We work directly in the space with trivial central character, even when $F$ has nontrivial (narrow) class number.
\item We work directly in each Atkin--Lehner eigenspace.
\end{enumerate}
Our matrices are also as sparse as in the definite quaternion algebra case, but with a simpler reduction theory---working with ternary quadratic forms instead of quaternion ideals (whose reduced norm forms are quaternary quadratic forms).

The implementation in Magma \cite{Magma} of a very general version of the algorithm to compute orthogonal modular forms has been reported on by Assaf--Fretwell--Ingalls--Logan--Secord--Voight \cite{defortho}, based on an initial implementation described by Green\-berg--Voight \cite{gv}.  For this paper, we provide a complexity analysis and also implemented an optimized version in C++ restricted to $F=\Q$ and squarefree level $N$, available online \cite{code}.  As Birch notes, these algorithms perform very well in practice (see \cref{sec:examples})!

\subsection*{Organization}  

The paper is organized as follows.  In \cref{sec:classical}, to orient the reader we give a simplified overview and presentation of the method in the special case where $F=\Q$, recovering spaces of classical modular forms.  In \cref{sec:amfs}, we provide a brief setup for algebraic modular forms.  
We then specialize first to the case of definite quaternionic modular forms, relating them to Hilbert modular forms in \cref{sec:hmf}.  We then specialize to orthogonal modular forms and explain the functoriality of the even Clifford algebra relating ternary quadratic forms and quaternion orders in \cref{sec:ternarycliff}: this provides a natural, Hecke-equivariant relationship between the two spaces.  In \cref{sec:radchar}, we introduce the radical character.  We then discuss orthogonal newform theory in \cref{sec:newforms}, present the algorithms in \cref{sec:orthalg}, and finally report on a specialized implementation for $F=\Q$ in \cref{sec:examples}.  We conclude in \cref{AppendixA} with a categorical equivalence which gives an alternative and more general approach to the correspondence provided by the even Clifford functor.   

\subsection*{Acknowledgements}  

The authors would like to thank Eran Assaf, Asher Auel, Bryan Birch, Dan Fretwell, Benedict Gross, Adam Logan, Ariel Pacetti, Gustavo Rama, and Rainier Schulze-Pillot for their helpful comments over many years.  Voight was supported by grants from the Simons Foundation (550029 and SFI-MPS-Infrastructure-00008650).  


\section{Overview in the classical case} \label{sec:classical}

In this section, we briefly describe our results over $F=\Q$, eventually restricting to the case of squarefree level $N$ and weight $k=2$, a case of special interest to Birch \cite{Birch}.  The reader can skip this section entirely or take it as an overview of the proof of our main result.  

Let $Q \colon V \to \Q$ be a positive definite ternary (i.e., $\dim_\Q V=3$) quadratic space with associated bilinear form 
\begin{equation}
T(x,y) \colonequals Q(x+y)-Q(x)-Q(y) \quad \text{ for $x,y \in V$}. 
\end{equation}
Let $\Lambda \subset V$ be a lattice ($\Lambda \simeq \Z^3$ is the $\Z$-span of a $\Q$-basis for $V$) that is \defi{integral}, i.e., $Q(\Lambda) \subseteq \Z$.   Choosing a basis $\Lambda=\Z e_1+\Z e_2+\Z e_3 \simeq \Z^3$  gives a quadratic form 
\begin{equation} \label{eqn:Q}
Q_\Lambda(x e_1+ye_2+ze_3) = ax^2+by^2+cz^2+uyz+vxz+wxy \in \Z[x,y,z]  
\end{equation}
and vice versa.  Define the \defi{(half-)discriminant}
\begin{equation}
\begin{aligned}
N \colonequals \disc(\Lambda) =\disc(Q_\Lambda) & \colonequals \det(T(e_i,e_j))_{i,j}/2 = \frac{1}{2}\det\begin{pmatrix} 2a & w & v \\ w & 2b & u \\ v & u & 2c \end{pmatrix}  \\
&= 4abc+uvw-au^2-bv^2-cw^2  \in \Z_{>0} \end{aligned} 
\end{equation}

We define the orthogonal group
\begin{equation}
\OO(V) \colonequals \{g \in \GL(V) : Q(gx)=Q(x) \text{ for all $x \in V$}\}
\end{equation}
and define $\OO(\Lambda)$ similarly; we have $\#\OO(\Lambda)<\infty$.  
We say lattices $\Lambda,\Pi \subset V$ are \defi{isometric}, written $\Lambda \simeq \Pi$, if there exists $g \in \OO(V)$ such that $g\Lambda=\Pi$.  We make similar definitions over $\Q_p$.  
The \defi{genus} of $\Lambda$ is
\begin{equation} \Gen(\Lambda) \colonequals \{\Pi \subset V : \Lambda_p \simeq \Pi_p \text{ for all $p$}\}. 
\end{equation}
The \defi{class set} $\Cl(\Lambda)$ is the set of isometry classes in $\Gen(\Lambda)$.  We have $\#\Cl(\Lambda)<\infty$ by the geometry of numbers.

Kneser's theory of $p$-neighbors \cite{Kneser,Voight:Kneser} gives an effective method to compute the class set, and it also gives the Hecke action, as follows.
Let $p \nmid \disc(\Lambda)$ be prime.  We say that a lattice $\Pi \subset V$ is a \defi{$p$-neighbor} of $\Lambda$, and write $\Pi \sim_p \Lambda$,  if $\Pi$ is integral and
\begin{equation}
[\Lambda\colon \Lambda \cap \Pi]=[\Pi:\Lambda \cap \Pi]=p. 
\end{equation}
If $\Lambda \sim_p \Pi$, then $\Pi \in \Gen(\Lambda)$.  The set of $p$-neighbors is efficiently computable: $\Pi \sim_p \Lambda$ if and only if there exists $v \in \Lambda$, $v\not\in p\Lambda$ such that $Q(v) \equiv 0 \pmod{p^2}$ and
\[ \Pi = p^{-1}v + \{w \in \Lambda : T(v,w) \in p\Z\}.  \]
The line spanned by $v$ in $\Lambda/p\Lambda$ uniquely determines $\Pi$,  accordingly there are exactly $p+1$ neighbors $\Pi$
(see \cite[Theorem 3.5]{Tornaria-thesis}).


The space of \defi{orthogonal modular forms} for $\Lambda$ and trivial weight is
\begin{equation} 
M(\OO(\Lambda)) \colonequals \Map(\Cl(\Lambda),\C). 
\end{equation}
In the basis of characteristic functions for $\Lambda$, we have $M(\OO(\Lambda)) \simeq \C^{\#\!\Cl(\Lambda)}$.

For $p \nmid \disc(\Lambda)$, define the \defi{Hecke operator}
\begin{equation} \begin{aligned}
T_p \colon M(\OO(\Lambda)) &\to M(\OO(\Lambda)) \\
T_p(f)([\Lambda'])  & \colonequals \sum_{\Pi' \sim_p\, \Lambda'} f([\Pi']). 
\end{aligned} 
\end{equation}
The operators $T_p$ commute and are self-adjoint with respect to a natural inner product, so there is a basis of simultaneous eigenvectors, called \defi{eigenforms}.  The Hecke operators restrict to $S(\OO(\Lambda)) \subset M(\OO(\Lambda))$, the orthogonal complement of the constant functions.

To compute this action: there is an explicit reduction theory of integral ternary quadratic forms due to Eisenstein \cite{Eisenstein} and improved by Schiemann \cite[\S 2]{Schiemann}, generalizing Gauss reduction of integral quadratic forms, which allows us to uniquely identify (special) isometry classes.  


Birch observed that there is an inclusion 
\begin{equation} 
S(\OO(\Lambda)) \hookrightarrow S_2(\Gamma_0(N)) 
\end{equation}
adding that it was ``provable'' \cite[p.~203]{Birch} and sketching an argument.  Indeed, this inclusion is explained by the even Clifford algebra.  
We define the \defi{even Clifford algebra} of $\Lambda$ by
\begin{equation} 
\calO \colonequals \Clf^0(\Lambda)  \colonequals \Z \oplus \Z i \oplus \Z j \oplus \Z k 
\end{equation}
with standard involution and multiplication laws
\begin{equation} 
\begin{aligned}
i^2 &= ui-bc \phantom{yourmom}   & jk &= a\overline{i} = a(u-i)  \\
j^2 &= vj-ac  & ki &= b\overline{j} = b(v-j) \\
k^2 &= wk-ab & ij &= c\overline{k} = c(w-k).
\end{aligned} 
\end{equation}
Completing the square, we have 
\begin{equation}
\calO \subset \calO \otimes \Q \colonequals  B \simeq \quat{w^2-4ab,-aN}{\Q} \end{equation}
(with other similar symmetric expressions) so $\calO$ is an order in a definite quaternion algebra $B$.  Let $D \colonequals \disc(B)$ be the product of the primes $p$ that ramify in $B$.  

\begin{thm} \label{thm:QO}
The association $\Lambda \mapsto \calO=\Clf^0(\Lambda)$ is functorial and induces a (reduced) discrim\-inant-preserving bijection 
\begin{eqnarray} 
\left\{ \begin{minipage}{16ex} 
\begin{center}
Lattices $\Lambda \subset V$ \\
up to isometry
\end{center} 
\end{minipage}
\right\}  &\leftrightarrow &
\left\{ \begin{minipage}{26ex} 
\begin{center}
Quaternion orders $\calO \subset B$ \\
up to isomorphism
\end{center} 
\end{minipage}
\right\}. 
\end{eqnarray}
\end{thm}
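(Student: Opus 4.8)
The plan is to prove the theorem by exhibiting an explicit inverse to the even Clifford functor $\Lambda \mapsto \calO = \Clf^0(\Lambda)$, checking that the two composites are naturally isomorphic to the identity, and verifying discriminant-preservation, the latter two reduced to a prime-by-prime analysis. As a preliminary, I would record that $\Clf^0$ is functorial for isometries: the even Clifford algebra is a functor from ternary quadratic lattices with isometries to $\Z$-algebras with isomorphisms, so an isometry $g \colon \Lambda \xrightarrow{\sim} \Pi$ of lattices in $V$, extended to an isometry of $V$, induces an automorphism of $B = \Clf^0(V)$, necessarily inner by Skolem--Noether, hence conjugation by some $b \in B^\times$ carrying the suborder $\Clf^0(\Lambda)$ onto $\Clf^0(\Pi)$. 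Conversely, any isomorphism of $\Z$-orders $\Clf^0(\Lambda) \xrightarrow{\sim} \Clf^0(\Pi)$ inside $B$ extends $\Q$-linearly to an automorphism of $B$, again inner, whose restriction to $V$ lies in $\SO(V) \simeq B^\times/\Q^\times$ and --- once the inverse construction below is known to be natural --- carries $\Lambda$ to $\Pi$. (For a ternary lattice, proper and improper isometry coincide, since $-\id_V$ has determinant $-1$ and fixes every lattice, so the distinction between $\OO(V)$ and $\SO(V)$ never intervenes.) Granting the inverse functor, this exhibits $\Lambda \mapsto \Clf^0(\Lambda)$ as a bijection from isometry classes of lattices in $V$ onto isomorphism classes of orders in $B$.

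To construct the inverse, I would attach to a quaternion order $\calO \subseteq B$ a ternary quadratic lattice inside the trace-zero subspace $B^0 = \{x \in B : \trd(x) = 0\}$: a $\Z$-lattice built canonically from $\calO$ and its reduced trace pairing $(x,y) \mapsto \trd(\bar x y)$ --- for instance the trace-zero part of the dual lattice $\calO^\sharp$, rescaled by $\discrd(\calO)$ --- equipped with a suitable rescaling of the reduced norm $\nrd$, the normalization pinned down by the requirement that $\Clf^0$ of it return $\calO$. This is the classical inverse to the even Clifford functor, going back to Brandt, Eichler, and Brzezi\'nski, developed for definite orders by Gross, and treated systematically in Voight's book on quaternion algebras; I would record only the formulas needed to see that the output is a lattice in $V$ and to compute its discriminant.

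The heart of the argument is then to verify that the canonical comparison maps $\Lambda \to (\text{ternary lattice of }\Clf^0(\Lambda))$ and $\calO \to \Clf^0(\text{ternary lattice of }\calO)$ are isomorphisms; combined with functoriality in both directions, this yields the asserted bijection. Since the comparison maps are between $\Z$-lattices, it suffices to check them after base change to $\Q$ --- the classical fact that $\Clf^0$ is an equivalence at the level of ternary quadratic spaces and quaternion algebras over a field --- and after base change to $\Z_p$ for each prime $p$, where I would compute both functors and their composites explicitly from the local structure theory: Jordan splittings of ternary quadratic $\Z_p$-lattices on one side, and the classification of quaternion $\Z_p$-orders (Eichler orders and the residually ramified ones) on the other. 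Discriminant-preservation, $\disc(\Lambda) = \discrd(\Clf^0(\Lambda))$, is likewise local, and in any case visible directly: completing the square exhibits $B \simeq \quat{w^2-4ab,-aN}{\Q}$, and a short computation shows that the reduced trace pairing on $\calO = \Z \oplus \Z i \oplus \Z j \oplus \Z k$ has Gram determinant $N^2$, whence $\discrd(\calO) = N = \disc(\Lambda)$.

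I expect the main obstacle to be the local analysis at $p = 2$ and at the ramified primes $p \mid \disc B$. Away from $2$ a ternary form is diagonalizable and both the even Clifford algebra and its inverse are completely transparent, so there the comparison is routine; at $2$ one must carry along binary and unary Jordan blocks and track the reduced trace pairing carefully, and at ramified primes the correspondence pairs non-maximal local quaternion orders with ternary lattices whose dual quotient fails to be \'etale --- exactly where the discriminant bookkeeping is most prone to slipping by a power of $2$. Everything else is either formal (functoriality and Skolem--Noether) or a classical statement over a field.
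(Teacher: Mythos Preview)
Your approach is correct in outline and would succeed. The paper itself does not prove this statement in-line: it is recorded in the overview section with a citation to the literature (Voight's quaternion book), and the general version is developed later in \S\ref{sec:ternarycliff} and especially in the Appendix. It is the Appendix's method that differs from yours in a way worth noting.

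You propose to invert the even Clifford functor via the trace-zero part of $\calO^\sharp$ rescaled by $\discrd(\calO)$, then verify the composites are the identity by a prime-by-prime local analysis, anticipating difficulties at $p=2$ and at ramified primes. The paper instead constructs the inverse as the \emph{canonical exterior form} $\psi_\calO \colon \tbigwedge^2(\calO/R) \to \tbigwedge^4\calO$, characterized by $\psi(x \wedge y) = 1 \wedge x \wedge y \wedge xy$; it records that $N\nrd(\calO^\sharp) \sim \psi_\calO$, so your inverse and the paper's agree up to similarity. The payoff of the exterior-form formulation is that the composites are shown to be naturally the identity via canonical multilinear-algebra identities (such as $\tbigwedge^2(\tbigwedge^2 M) \simeq M \otimes \tbigwedge^3 M$) together with a single explicit computation in a good basis over the fraction field---no local case analysis is required, and the $p=2$ headaches you anticipate simply never arise. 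Your route is more concrete and would work over $\Z$; the exterior-form approach is cleaner and carries over unchanged to an arbitrary Dedekind (indeed noetherian) base.
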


\Cref{thm:QO} has a long history \cite[Remark 22.5.13]{Voight:quatbook}, with perhaps the earliest version going back to Hermite.  

Just as we defined the genus of $\Lambda$, we similarly define the genus of $\calO$ to be the set of orders in $B$ which are locally isomorphic to $\calO$; the set of (global) isomorphism classes in the genus is called the \defi{type set} $\Typ \calO$.  

\begin{cor} \label{cor:bij}
The even Clifford map induces a natural bijection
\begin{equation}  \label{eqn:ClTypO}
\Cl \Lambda \leftrightarrow \Typ \calO.
\end{equation}
\end{cor}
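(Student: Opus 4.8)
\section*{Proof proposal for \texorpdfstring{\Cref{cor:bij}}{the Corollary}}

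The plan is to deduce the corollary from \Cref{thm:QO} by showing that the even Clifford map respects genera: that two lattices in $V$ are locally isometric at every prime exactly when their even Clifford orders are locally isomorphic at every prime. The starting observation is that the formation of the even Clifford algebra commutes with base change: for every prime $p$ there is a canonical identification $\calO_p = \Clf^0(\Lambda)\otimes_\Z\Z_p \simeq \Clf^0(\Lambda_p)$, sitting inside $B_p = \Clf^0(V_p)$, and likewise for any lattice $\Pi\subset V$ with $\calO'\colonequals\Clf^0(\Pi)$. Moreover \Cref{thm:QO} is not special to $\Z$: the identical statement holds with $\Z$ replaced by the complete discrete valuation ring $\Z_p$, giving for each $p$ a reduced-discriminant-preserving bijection between $\Z_p$-lattices in $V_p$ up to isometry and $\Z_p$-orders in $B_p$ up to isomorphism, compatibly with the global one.

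Granting this local refinement, the corollary is immediate. For a lattice $\Pi\subset V$ and each prime $p$, the local form of \Cref{thm:QO} gives $\Lambda_p\simeq\Pi_p$ (as quadratic $\Z_p$-lattices) if and only if $\calO_p\simeq\calO'_p$ (as $\Z_p$-orders); for all but finitely many $p$ both conditions hold automatically, since away from $\discrd(\calO)=\disc(\Lambda)$ all the orders involved are maximal and the lattices unimodular. Intersecting over $p$ shows $\Pi\in\Gen(\Lambda)$ if and only if $\calO'\in\Gen(\calO)$. Hence the global bijection of \Cref{thm:QO} carries $\Gen(\Lambda)$ onto $\Gen(\calO)$, and so descends to a bijection between isometry classes in $\Gen(\Lambda)$ and isomorphism classes in $\Gen(\calO)$, which is exactly $\Cl\Lambda\leftrightarrow\Typ\calO$; both sides are finite, so there is no difficulty in imposing conditions at all primes at once.

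The step with genuine content — and the one I expect to require the most care — is the local form of \Cref{thm:QO}, specifically the implication that an abstract isomorphism of even Clifford $\Z_p$-orders $\calO_p\xrightarrow{\sim}\calO'_p$ is induced by an isometry $\Lambda_p\xrightarrow{\sim}\Pi_p$ (together with the fact that every $\Z_p$-order in $B_p$ arises as $\Clf^0$ of some lattice). This is not formal from functoriality; it is the assertion that $\Clf^0$ is a genuine bijection rather than merely a well-defined map, which one proves by exhibiting an explicit quasi-inverse reconstructing the ternary quadratic lattice from its even Clifford order (from the trace-zero part of a suitable dual, equipped with a twist of the reduced norm form) — a construction which, like $\Clf^0$ itself, commutes with base change. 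Alternatively, one can avoid localization entirely: both $\Cl\Lambda$ and $\Typ\calO$ admit adelic double-coset descriptions, and in dimension three the even Clifford functor realizes the exceptional isomorphism $\SO(V)\cong B^\times/\Q^\times$ (with $-\id_V$ acting trivially on every lattice, so that $\OO$-classes and $\SO$-classes of lattices coincide here), which matches the two double-coset spaces directly.
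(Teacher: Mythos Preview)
Your proposal is correct and follows essentially the same route as the paper. In the overview section the paper states the corollary without proof, treating it as immediate from \Cref{thm:QO}; the general version (\Cref{thm:evencliffbij} together with \eqref{eqn:yupClO}) is proved exactly as you outline---functoriality of $\Clf^0$ with respect to base change localizes the bijection to each prime, and the explicit quasi-inverse (via the reduced norm on the trace-zero part, \eqref{eqn:nrdosharppsi}) shows injectivity in both directions, so genera correspond and the bijection descends to class sets. Your alternative adelic description is likewise the one the paper uses in \cref{sec:newforms}, and the paper also records a third route via the equivalence of categories in \Cref{thm:invassocfunc}, which it notes has \Cref{cor:bij} as an immediate consequence.
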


Of course from this corollary we obtain an isomorphism
\[ M(\OO(\Lambda)) = \Map(\Cl \Lambda,\C) \xrightarrow{\sim} \Map(\Typ(\calO),\C). \]

We now move from orders to ideals.  Define the \defi{(right) class set} of $\calO$ to be the set of locally principal fractional right $\calO$-ideals (those locally isomorphic to $\calO$) up to left multiplication by $B^\times$.  Then taking the left order gives a surjective map of (pointed) sets
\begin{equation} \label{eqn:ClsTyp}
\begin{aligned}
\Cls \calO &\to \Typ \calO \\
[I] &\mapsto [\calO_{\textup{\textsf{L}}}(I)] 
\end{aligned}
\end{equation}
which induces an injective $\C$-linear map
\begin{equation} 
\Map(\Typ(\calO),\C) \to M(\calO) \colonequals \Map(\Cls \calO,\C).
\end{equation}

Suppose now for simplicity that $N$ is \emph{squarefree}.  Then $\calO$ is an Eichler order (in fact, a hereditary order).  The fibers of \eqref{eqn:ClsTyp} are naturally identified with two-sided ideals, measured by the group $\AL(\calO) \simeq \prod_{p \mid N} C_2$ of Atkin--Lehner involutions.  By naturality and restricting to the cuspidal subspace, we have a Hecke-equivariant isomorphism
\begin{equation} 
S(\OO(\Lambda)) \xrightarrow{\sim} S(\calO)^{+}
\end{equation}
where $S(\calO)^+$ are those forms invariant under $\AL(\calO)$.  Finally, the Jacquet--Langlands correspondence gives
\begin{equation}
S(\calO)^+ \hookrightarrow S_2^{\textup{$D$-new}}(\Gamma_0(N))^{\delta}
\end{equation}
where for $M \mid N$ we write $\varepsilon_M$ for the sign vector which is $-1$ exactly at the primes $p \mid M$ and then $\delta \colonequals \varepsilon_D$ for the Hilbert symbol for $B$.

To get all forms, we add a representation.  We equip $V$ with an orientation (e.g., choose an ordered $\Q$-basis $V \simeq \Q^3$) and define $\SO(V) \colonequals \OO(V) \cap \SL(V)$.  We note that $\Orth(V) \simeq \{\pm 1\} \times \SO(V)$.  Therefore, we have a natural homomorphism obtained from the composition 
\begin{equation} \nu\colon \Orth(V)  \to \Orth(V)/\{\pm 1\}  \simeq \SO(V)  \simeq B^\times/\Q^\times  \xrightarrow{\nrd} \Q^\times/\Q^{\times 2} 
\end{equation}
called the \defi{spinor norm}.  A delightful calculation (see \Cref{lem:spinornorm}) shows that if $\gamma \in \SO(V)$ has $\tr(\gamma)\neq -1$, then $\gamma(\nu)=\tr(\gamma)+1$. 

For $r \mid N$, we define the \defi{spinor character} for $r$ by
\begin{equation} 
\begin{aligned}
\nu_r: \Orth(V) \xrightarrow{\nu} \Q^\times/\Q^{\times 2}  &\to \{\pm 1\}  \\
a &\mapsto \prod_{p \mid r} (-1)^{\ord_p(a)} 
\end{aligned} 
\end{equation}
The spinor characters ``find'' the remaining forms with all Atkin--Lehner signs.  We let $M(\SO(\Lambda),\nu_r)$ be the space of functions from $\Gen \Lambda$ to $\C$ that transform by $\nu_r$: that is, $f(\gamma \Lambda')=\nu_r(\gamma)f(\Lambda')$ for all $\gamma \in \SO(V)$.  We take again $S(\SO(\Lambda),\nu_r)$ the orthogonal complement of the constant functions: when $r \neq 1$, we have $S(\SO(\Lambda),\nu_r)=M(\SO(\Lambda),\nu_r)$.  

Putting all of this together, we obtain \Cref{thm:equiv} in this special case: we have a Hecke-equivariant isomorphism
\begin{equation} 
S(\SO(\Lambda),\nu_r) \simeq S_2^{\text{$D$-new}}(\Gamma_0(N))^{\delta\varepsilon_r}.
\end{equation}

To prove \Cref{thm:mainthm}, we compute the Hecke action on $S(\SO(\Lambda),\nu_r)$ using ternary forms and $p$-neighbors, giving the Hecke action on the space of classical modular forms.  To compute $T_p$ on $S(\SO(\Lambda),\nu_r)$ for $p \nmid N$, starting with $\Lambda$ we iteratively compute $p$-neighbors and reduce to identify isomorphism classes: we repeat this $O(\#\Cl \Lambda)=O(d)$ times with $p+1$ neighbors, so the running time is $\widetilde{O}(pd)$ where 
\begin{equation} 
d \colonequals \dim S(\SO(\Lambda),\nu_r)
\end{equation}
is the dimension of the output space.

\begin{rmk}
Birch says that his original motivation was to generalize the \emph{method of graphs} due to Mestre--Oesterl\'e.  This method was recently studied by Cowan \cite{Cowan}, so we briefly explain the connection here as this method provides an extension to nonsquare level with many parallel aspects.  Let $\calO$ be a maximal order in a quaternion algebra $B$ with discriminant $\disc(B)=p$.  Then there are at most two supersingular curves $E$ up to isomorphism over $\F_p^{\textup{al}}$ such that $\End(E) \simeq \calO$; there are two such if and only if $j(E) \in \F_{p^2} \smallsetminus \F_p$ if and only if the unique two-sided ideal of $\calO$ of reduced norm $p$ is not principal.  So if we identify $E$ with its image $E^{(p)}$ under the Frobenius map when $j(E) \not\in \F_p$, then we get a bijection of such classes with the type set $\Typ \calO$, which is in bijection with ternary quadratic forms of discriminant $p$ (also known as the Deuring correspondence).  
\end{rmk}

We conclude with a simple example.  

\begin{exm}
Let $\Lambda=\Z^3=\Z e_1 + \Z e_2 + \Z e_3 \subset \Q^3$ have the quadratic form
\[ Q_\Lambda(x,y,z) = x^2+y^2+3z^2+xz  \]
and bilinear form given by
\[ [T_\Lambda]=\begin{pmatrix} 2 & 0 & 1 \\ 0 & 2 & 0 \\ 1 & 0 & 6 \end{pmatrix} \]
so $\disc(Q_\Lambda)=11$.  We compute that among the $2$-neighbors of $\Lambda$, one is isometric to $\Lambda$ and the other two reduce to a new lattice $\Lambda'$ with Gram matrix
\[ \begin{pmatrix} 2 & -1 & 1 \\ -1 & 2 & -1 \\ 1 & -1 & 8 \end{pmatrix}. \]
Among the $2$-neighbors of $\Lambda'$, all $3$ are isometric to $\Lambda$.  This yields 
\[ [T_2] = \begin{pmatrix} 1 & 2 \\ 3 & 0 \end{pmatrix}. \]
Similarly, we compute
\[ [T_3]=\begin{pmatrix} 2 & 2 \\ 3 & 1 \end{pmatrix}, \quad
[T_5]=\begin{pmatrix} 4 & 2 \\ 3 & 3 \end{pmatrix}, \quad \dots.  \]
The constant function $e=\begin{pmatrix} 1 \\ 1 \end{pmatrix} \in M(\SO(\Lambda))$  is an Eisenstein series with $T_p(e)=(p+1)e$.  
Another eigenvector is $f=\begin{pmatrix} 2 \\ -3 \end{pmatrix}$ with $T_p(f)=a_p(f)$: 
\[ a_3=-1, a_5=1, \dots  \]
We match it with the familiar modular form \href{https://www.lmfdb.org/ModularForm/GL2/Q/holomorphic/11/2/a/a/}{\textsf{11.2.a.a}}:
\[ \sum_{n=1}^{\infty} a_n q^n = \prod_{n=1}^{\infty} (1-q^n)^2(1-q^{11n})^2=q - 2q^2 - q^3 + \ldots \in S_2(\Gamma_0(11)).  \]
And indeed the Atkin--Lehner involution $z \mapsto \displaystyle{\frac{-1}{11z}}$ acts on $f(z)\,\textup{d}z$ with eigenvalue $w_{11}=-a_{11}=-1$, as expected.
\end{exm}


\section{Algebraic modular forms} \label{sec:amfs}

We now begin the general case.  In this section, we briefly review the theory of algebraic modular forms, as our quaternionic modular forms and orthogonal modular forms will be special cases of this theory.  For further reference, see Gross \cite{Gross1}.  Concrete versions of the general theory in this section will be provided in the two sections that follow.

Let $F$ be a number field with ring of integers $R \colonequals \Z_F$.  Let $\bdG$ be a reductive algebraic group defined over $F$.  
Let $G_\infty \colonequals \prod_{v\mid \infty} \bdG(F_v)$.  Then $G_\infty$ is a real Lie group with finitely many connected components.  We consider a very special case: suppose that $G_\infty$ is compact modulo its center.  This setup was investigated by Gross \cite{Gross1}, who showed that modular forms and automorphic representations can be fruitfully studied in the absence of analytic hypotheses; in particular, we are in an ideal setting to develop algorithms.

Let $\Fhat \colonequals \tprodprime{\frakp} F_\frakp$ be the finite adeles of $F$.  Let $\Khat$ be a compact open subgroup of $\Ghat \colonequals \bdG(\Fhat)$, let $G \colonequals\bdG(F)$, and consider the double coset space
\begin{equation} 
Y \colonequals G \backslash \Ghat/\Khat. 
\end{equation}
The set $Y$ is finite (being both discrete and compact), and so we can write a finite decomposition
\begin{equation} \label{eqn:decomp}
\Ghat=\bigsqcup_i^{<\infty} G \betahat_i \Khat. 
\end{equation}
Each point $G \betahat_i \Khat\in Y$ has a (finite) stabilizer group
\begin{equation} 
\Gamma_i \colonequals \betahat_i \Khat \betahat_i^{-1} \cap G
\end{equation}
whose conjugacy class in $G$ is independent of the choice of representative $\betahat_i$.

Let $W$ be a finite-dimensional representation of $G$ over $\C$ with a left action $x \mapsto g\cdot x$ for $g \in G$ and $x \in W$.  The space of \defi{modular forms on $\bdG$ of level $\Khat$ and weight $W$} is
\begin{equation} 
M_W( \Khat) \colonequals \{ f\colon \Ghat \to W : f(g\alphahat \uhat) = g \cdot f(\alphahat) \text{ for all
$g \in G$, $\uhat \in \Khat$}\}
\end{equation}
A form $f \in M_W(\Khat)$ is locally constant on $\Ghat$ since it is invariant under the open compact $\Khat$.  

With respect to a decomposition \eqref{eqn:decomp}, we see that as $\C$-vector spaces, we have an isomorphism
\begin{equation} \label{eqn:decompbetahati}
\begin{aligned} 
M_W(\Khat) &\xrightarrow{\sim} \bigoplus_i H^0(\Gamma_i, W) \\
f &\mapsto (f(\betahat_i))_i
\end{aligned}
\end{equation}
where $H^0(\Gamma_i,W)$ denotes $\Gamma_i$-invariants, since $g\betahat_i \Khat = \betahat_i \Khat$ if and only if $g \in \Gamma_i$.  

We will also make use of two slight generalizations and modifications.  First, we allow character.  Suppose that $\Khat \trianglelefteq \Khat_0$ is a normal inclusion such that $C \colonequals \Khat_0/\Khat$ is \emph{abelian}.  Since $\Khat_0$ normalizes $\Khat$, the group $C$ acts by right translation on $\Ghat/\Khat$, inducing operators on $M_W(\Khat)$ which we call \defi{diamond operators} by analogy with the classical case.  Then we have an isotypic decomposition
\begin{equation} \label{eqn:decomposechar}
M_W(\Khat) \xrightarrow{\sim} \bigoplus_{\clap{$\scriptstyle
  \chi \in C\spcheck\!\!\!\!$}} M_W(\Khat_0,\chi) 
\end{equation}
according to characters $\chi \colon C \to \C^\times$, where we define
\begin{equation} 
M_W(\Khat_0,\chi) \colonequals \{ f\colon \Ghat \to W : f(g\alphahat \uhat) = \chi(u)(g \cdot f(\alphahat)) \text{ for all
$g \in G$, $\uhat \in \Khat_0$}\}.
\end{equation}

Second, similarly the normalizer mod centralizer $A \colonequals N_{\Ghat}(\Khat)/(\Khat C_{\Ghat}(\Khat))$ acts by conjugation on $\Khat$ and hence on $M_W(\Khat)$. When $A$ is abelian we have an isotypic decomposition
\begin{equation} \label{eqn:decomposecharbig}
M_W(\Khat) \xrightarrow{\sim} \bigoplus_{\clap{$\scriptstyle
  \chi \in A\spcheck\!\!\!\!$}} M_W(\Khat)^{\chi} 
\end{equation}
where the subspace is defined by the condition that $A$ acts via $\chi$.

\begin{rmk} \label{rmk:reps}
As further generalizations, we could equally well work with $W$ a finite-dimensional representation over any field, and weaken the requirement that the groups $C=\Khat_0/\Khat$ and $A=N_{\Ghat}(\Khat)/(\Khat Z_{\Ghat}(\Khat))$ are abelian by instead taking the isotopic decomposition indexed instead by irreducible representations.
\end{rmk}

The space $M_W(\Khat)$ is equipped with the action of Hecke operators $T_{\pihat}$ for $\pihat \in \Ghat$, depending on the class $\Khat \pihat \Khat$, as follows: write
\begin{equation}
\Khat \pihat \Khat = \bigsqcup_j \, \pihat_j \Khat 
\end{equation}
as a finite disjoint union, and then for $f \in M_W(\Khat)$ define
\begin{equation} 
(T_{\pihat} f)(\alphahat \Khat) \colonequals \sum_j f(\alphahat \pi_j \Khat). 
\end{equation}
This is well-defined (independent of the choice of representative $\pihat$ and representatives $\pihat_j$) by the right $\Khat$-invariance of $f$.

We call the subspace of $M_W(\Khat)$ of functions that factor through $\Ghat/[\Ghat,\Ghat]$ the space of \defi{Eisenstein series}.  There is a natural inner product on $M_W(\Khat)$---the dot product weighted by $1/\#\Gamma_i$---for which the Hecke operators are self-adjoint, and we let the \defi{cuspidal subspace} be the orthogonal complement of the Eisenstein series under the inner product.

\section{Hilbert modular forms as quaternionic modular forms}
\label{sec:hmf}

In this section, we specialize the previous section to the case of groups from definite quaternion algebras.  

Let $B$ be a definite quaternion algebra over a totally real field $F$.  We take $\bdG$ to be the reductive group associated to $G=B^\times$, so that $G_\infty=B_\infty^\times \simeq \prod_{v \mid \infty} \HH^\times$  where $\HH$ is the division ring of real Hamiltonians, and $\widehat{G} = \widehat{B}^\times$.  

For level structure, we let $\calO \subseteq B$ be an $R$-order and take $\Khat=\calOhat^\times$; let $\frakN \colonequals \discrd(\calO)$.  Then the double coset space
\begin{equation} 
Y = G\backslash \Ghat / \Khat = B^\times \backslash \Bhat^\times\!
/\calOhat^\times \leftrightarrow \Cls \calO 
\end{equation}
is naturally identified with the \defi{(right) class set} of $\calO$ \cite[Lemma 27.6.8]{Voight:quatbook}.  

We restrict to weights $k=(k_v)_v \in \prod_{v \mid \infty} 2\Z_{>0}$, corresponding to the representation $W_k \colonequals \bigotimes_{v \mid \infty} \Sym^{k_v-2}(\C)(-k_v/2-1)$ with respect to a splitting $B^\times \hookrightarrow \GL_2(\C)$, but we also consider twists of these weights by characters.  On the associated space of modular forms, there is an action by diamond operators corresponding to the class group \cite[41.3.4]{Voight:quatbook} which picks out the central character; we let $M_k(\widehat{\calO}^\times)$ be the fixed space.  The Eisenstein subspace consists of those functions that factor through the norm map $\nrd\colon \hatx B \to \hatx F$; this space is trivial unless $k_v=2$ for all $v \mid \infty$.  
We let $S_k(\widehat{\calO}^\times) \subseteq M_k(\widehat{\calO}^\times)$ be the orthogonal complement of the Eisenstein subspace.  



We further define the group 
\begin{equation} \label{eqn:ALOHat}
\AL(\calOhat) \colonequals N_{\Bhat^\times}(\calOhat)/(\Fhat^\times \calOhat^\times)  \simeq \prod_{\frakp \mid \frakN} \AL(\calO_\frakp) 
\end{equation}
which we call the adelic \defi{Atkin--Lehner group}.  (Although there are complications in the global normalizer group \cite[\S 28.9]{Voight:quatbook}, they do not arise adelically.)  Then as in \eqref{eqn:decomposecharbig} when $\AL(\calOhat)$ is abelian---and more generally as in \Cref{rmk:reps}---we have
\begin{equation}
S_k(\calOhat^\times) \simeq \bigoplus_{\chi \in \AL(\calOhat)\spcheck} S_k(\calOhat^\times)^\chi
\end{equation}
checking that the Eisenstein subspace is preserved under the normalizer.

We now focus on a very general class of orders for which we have good control of the normalizers.  See Voight \cite[\S 24.3]{Voight:quatbook} for background reading.  Let $\frakN\subseteq R$ be a nonzero ideal.

\begin{defn}
A quaternion algebra $B$ over $F$ is \defi{suitable} for $\frakN$ provided that
\begin{enumroman}
    \item $B$ is ramified at all infinite places; and
    \item if $B$ is ramified at $\frakp$, then $v_\frakp(\frakN)$ is odd.
\end{enumroman}
\end{defn}

\begin{proposition}
There are quaternion algebras suitable for $\frakN$ \emph{except} when $[F:\Q]$ is odd and $\frakN$ is a square.
\end{proposition}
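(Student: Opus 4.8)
The plan is to reduce everything to the classification of quaternion algebras over a number field by their ramification sets: a finite set $S$ of places of $F$ equals $\Ram(B)$ for some quaternion algebra $B$ over $F$ if and only if $\#S$ is even (and then $B$ is determined up to isomorphism). Under this dictionary, $B$ being \emph{suitable} for $\frakN$ translates into the existence of a finite set $S$ of places with $\#S$ even that contains all $[F:\Q]$ archimedean places and whose finite members $\frakp$ all satisfy $v_\frakp(\frakN)$ odd. So the whole proposition becomes a counting question about such sets $S$.

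First I would handle the existence direction, i.e.\ show a suitable $B$ exists outside the stated exception. If $[F:\Q]$ is even, I would simply take $S$ to be the set of all archimedean places: it has even cardinality, contains all infinite places, and contains no finite place, so conditions (i)--(ii) of suitability hold (the second vacuously). If instead $[F:\Q]$ is odd but $\frakN$ is not a square, then some prime $\frakp$ divides $\frakN$ to an odd power, and I would take $S$ to be all archimedean places together with that single $\frakp$; now $\#S = [F:\Q]+1$ is even, and the only finite prime in $S$ divides $\frakN$ to an odd power. In both cases the quaternion algebra with ramification set $S$ is suitable for $\frakN$, which covers every case outside the exception.

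Then I would prove the converse: if $[F:\Q]$ is odd and $\frakN$ is a square, no suitable $B$ exists. Indeed, for $\frakN$ a square, $v_\frakp(\frakN)$ is even for \emph{every} prime $\frakp$, so condition (ii) forbids $S=\Ram(B)$ from containing any finite place; but condition (i) forces $S$ to contain all archimedean places, hence $S$ is exactly the set of archimedean places and $\#S=[F:\Q]$ is odd, which is impossible for a ramification set. This is the parity obstruction, and it is the crux of the ``except'' clause.

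I do not expect any genuine obstacle here: the only ingredient of substance is the classification theorem, and the remainder is parity bookkeeping. The one point I would state carefully is that $F$ totally real means all $[F:\Q]$ infinite places are \emph{real}, so each one must lie in $\Ram(B)$ when $B$ is ramified there (condition (i)) and hence contributes to the parity of $\#S$; this is exactly why the parity of $[F:\Q]$ is the governing quantity.
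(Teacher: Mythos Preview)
Your argument is correct and follows essentially the same route as the paper: take $S$ equal to the set of archimedean places when $[F:\Q]$ is even, and adjoin one prime of odd valuation when $[F:\Q]$ is odd and $\frakN$ is nonsquare. You additionally supply the converse (nonexistence in the exceptional case), which the paper's proof omits; this is a welcome completion rather than a deviation.
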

\begin{proof}
When $[F:\Q]$ is even, we can take $B$ ramified at all the infinite places and split at all the finite places.
When $[F:\Q]$ is odd and $\frakN$ is not a square, we can take $B$ ramified at all infinite places and at one $\frakp$, where $v_\frakp(\frakN)$ is odd, and split at all other finite places.
\end{proof}

\begin{remark}
We will not assume that the quaternion algebra is as given by the proof above.  For reasons that will be clear later, it will be more convenient to have a quaternion algebra ramified at more primes.
\end{remark}

An order $\calO$ is \defi{locally residually unramified} \cite[24.3]{Voight:quatbook} if for all primes $\frakp \mid \discrd(\calO)$, the quotient $\calO_\frakp/\rad(\calO_\frakp)$ of the quotient of the completed order by the Jacobson radical is either $\F_\frakp \times \F_\frakp$ (\defi{residually split} or \defi{Eichler} at $\frakp$) or $\F_{\frakp^2}$ (\defi{residually inert} at $\frakp$), corresponding to the Eichler symbol values $1,-1$.  

We will need to recall some facts in the residually inert case, studied by Pizer \cite[\S2]{Pizer} (see also \cite[24.3.7]{Voight:quatbook}).  Let $\calO_\frakp$ be residually inert.  Lifting the extension $\calO_\frakp/\rad \calO_\frakp \simeq \F_{\frakp^2}$ we have a quadratic unramified extension $S_\frakp \supseteq R_\frakp$, the valuation ring in the quadratic unramified extension $K_\frakp \supseteq F_\frakp$.  Diagonalizing, we find $\calO_\frakp \simeq \quat{S_\frakp,b}{R_\frakp}$ for some $b \in R \smallsetminus \{0\}$.  Since $\Nm_{S_\frakp|R_\frakp}(S_\frakp^\times)=R_\frakp^\times$, we may suppose that $b=\pi^e$ for some $e \geq 0$ with $\pi$ a uniformizer for $R_\frakp$, and then $\discrd \calO_\frakp = \frakp^e$.  We have $e$ even when $B_\frakp \simeq \M_2(F_\frakp)$ and $e$ odd when $B_\frakp$ a division algebra \cite[24.4.9]{Voight:quatbook}.  Then \cite[Proof of Proposition 24.4.7]{Voight:quatbook} the order $\calO_\frakp \otimes S_\frakp \hookrightarrow \M_2(S_\frakp)$ is residually split, and the regular representation identifies
\begin{equation} \label{eqn:Opembed}
\calO_\frakp \hookrightarrow \left\{\begin{pmatrix} u & v \\ b\overline{v} & \overline{u} \end{pmatrix} : u,v \in S_\frakp\right\} \subseteq \M_2(S_\frakp)
\end{equation}
with $j \mapsto \begin{pmatrix} 0 & 1 \\ b & 0 \end{pmatrix}$.  

\begin{proposition} \label{prop:unique}
If $B$ is a suitable quaternion algebra for $\frakN$,
then there exists a residually unramified order $\calO \subseteq \frakN$ with $\discrd(\calO)=\frakN$.  
Moreover, the genera of such orders are determined by the choice of residually inert or split for $\frakp^e \parallel \frakN$ with $e$ even.  
\end{proposition}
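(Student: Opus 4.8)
The plan is to argue locally and then glue via the local--global dictionary for quaternion orders. First I would recall that an $R$-order $\calO\subseteq B$ is determined by the collection of its completions $\calO_\frakp\subseteq B_\frakp$, subject only to $\calO_\frakp$ being maximal for all but finitely many $\frakp$; conversely, fixing a maximal order $\calO^{\max}\subseteq B$ together with, for each $\frakp$ in a finite set $S$, a local order $\calO_\frakp\subseteq B_\frakp$, determines a unique global order with these local pieces and $\calO^{\max}$-completions off $S$ \cite[\S 9.4]{Voight:quatbook}. Since $\discrd$ is local and multiplicative over completions, $\discrd\calO_\frakp=R_\frakp$ exactly when $\calO_\frakp$ is maximal, and ``residually unramified'' is a local condition, it suffices to produce, for each $\frakp$ with $e_\frakp\colonequals v_\frakp(\frakN)\geq 1$, a residually unramified order in $B_\frakp$ of reduced discriminant $\frakp^{e_\frakp}$, and then to classify such orders up to local isomorphism. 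Note that suitability forces $\frakD\mid\frakN$, so there is nothing to do at primes dividing $\frakD$ but not $\frakN$.

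Next I would carry out the local construction, by cases on $\frakp$. If $B_\frakp\simeq\M_2(F_\frakp)$, the standard Eichler order of level $\frakp^{e_\frakp}$ is residually split of reduced discriminant $\frakp^{e_\frakp}$ (Eichler symbol $1$), valid for any $e_\frakp\geq 1$; and when $e_\frakp$ is even, the order of \eqref{eqn:Opembed} with $b=\pi^{e_\frakp}$ is residually inert of reduced discriminant $\frakp^{e_\frakp}$ (Eichler symbol $-1$), which lies inside $\M_2(F_\frakp)$ precisely because $e_\frakp$ is even \cite[24.4.9]{Voight:quatbook}. If $B_\frakp$ is a division algebra, then by suitability $e_\frakp$ is odd, and the order of \eqref{eqn:Opembed} with $b=\pi^{e_\frakp}$ is residually inert of reduced discriminant $\frakp^{e_\frakp}$ (for $e_\frakp=1$ this recovers the maximal order); here there is no residually split option, as residually split orders embed in the split algebra. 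Taking maximal orders at the remaining primes then assembles the desired global $\calO$, which establishes existence.

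For the ``moreover'' clause I would unwind the definition of genus: two residually unramified orders $\calO,\calO'$ of reduced discriminant $\frakN$ lie in the same genus iff $\calO_\frakp\simeq\calO'_\frakp$ for all $\frakp$. At $\frakp\nmid\frakN$ both are maximal, hence conjugate. At $\frakp\mid\frakN$ I would invoke the local classification of residually unramified orders due to Pizer \cite[\S 2]{Pizer} (see also \cite[24.3.7, 24.4.7]{Voight:quatbook}): the local isomorphism class of such an order of reduced discriminant $\frakp^{e_\frakp}$ is determined by its Eichler symbol, and the two possible symbols are realized by non-isomorphic orders (already distinguished by $\calO_\frakp/\rad\calO_\frakp$ being $\F_\frakp\times\F_\frakp$ versus $\F_{\frakp^2}$). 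When $e_\frakp$ is odd the symbol is forced ($-1$ if $B_\frakp$ ramifies, $1$ if not, since a residually inert order of odd reduced-discriminant exponent forces $B_\frakp$ to be division); when $e_\frakp$ is even, $B_\frakp$ is split (suitability again), and both symbols occur by the constructions above. Hence assigning to $\calO$ the vector of Eichler symbols at those $\frakp$ with $\frakp^{e_\frakp}\parallel\frakN$ and $e_\frakp$ even is a bijection from the set of genera of such orders to $\prod\{\pm 1\}$, as claimed.

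The main obstacle — really the only nonroutine ingredient — is the local structure theory in the residually inert case: the uniqueness up to conjugacy of the orders \eqref{eqn:Opembed} and the precise way the parity of the reduced-discriminant exponent is coupled to whether $B_\frakp$ splits. But this is exactly what is recorded in the discussion preceding \eqref{eqn:Opembed}, following Pizer \cite[\S 2]{Pizer} and \cite[24.3.7, 24.4.7, 24.4.9]{Voight:quatbook}; modulo that input, the proof is the standard local--global bookkeeping for quaternion orders.
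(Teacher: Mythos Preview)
Your proposal is correct and follows the same local--global strategy as the paper, using the key parity constraint on residually inert orders \cite[24.4.9]{Voight:quatbook} to see that suitability is exactly what is needed at ramified primes. The paper's own proof is much terser---it simply records that this parity obstruction is handled by suitability and refers to Gross \cite[Proposition 3.4]{Gross}---whereas you spell out the explicit local constructions and the genus classification in full, which is helpful but not a different argument.
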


We could make a unique choice in \Cref{prop:unique} by taking $\calO_\frakp$ residually split whenever $B_\frakp$ is split.  

\begin{proof}
As we have seen, all residually unramified orders in a local division quaternion algebra have odd exponent in their reduced discriminant, which is precisely guaranteed by the algebra being suitable; there is no condition at primes $\frakp$ split in $B$.  
See also Gross \cite[Proposition 3.4]{Gross}.  
\end{proof}

\begin{remark}
Gross \cite{Gross} and Martin \cite{Martin} also consider residually ramified orders with bounded level, so our methods could be extended to that case too (e.g., the Lipschitz order in the Hurwitz order).  
\end{remark}

We are now ready to state the correspondence between Hilbert modular forms and quaternionic modular forms.  Let $B$ be a quaternion algebra suitable for $\frakN$, let $\frakD=\disc B$, and let $\calO$ be a residually unramified order with $\discrd(\calO)=\frakN$.  Write $\frakN = \frakN_{-} \frakN_{+}$ where
$\frakN_{-}$ is divisible by exactly those primes where $\calO$ is residually inert (i.e., Eichler invariant $-1$) and $\frakN_{+}$ by primes where $\calO$ is residually split (Eichler invariant $+1$).  Whenever $\frakp \mid \frakD$, we must have $\frakp \mid \frakN_{-}$.  

For Hilbert modular forms, we take adelic level structure $\widehat{\Gamma}_0(\frakN) \leq \GL_2(\widehat{R})$ writing the associated space of cuspidal Hilbert modular forms (with trivial central character) as $S_k(\widehat{\Gamma}_0(\frakN))$.  For $\frakM\mid\frakN$, let $S_k^{\frakM\text{-}\rm{new}}(\widehat{\Gamma}_0(\frakN)) \subseteq S_k(\widehat{\Gamma}_0(\frakN))$ be the subspace consisting of forms which are $\frakp$-new for all $\frakp\mid\frakM$ with respect to the natural degeneracy operators.  

\begin{theorem}[Eichler--Shimizu--Jacquet--Langlands correspondence] \label{thm:ESJL}
Let $\calO$ be a suitable quaternion order of level $\frakN$.
Then there is an isomorphism of Hecke-modules
\begin{equation} \label{eqn:JL}
    S_k(\widehat{\calO}) \simeq
    \bigoplus_{\frakM}
    S_k^{\textup{$\frakM$-new}}(\widehat{\Gamma}_0(\frakM\frakN_{+}))
\end{equation}
where the sum runs over all divisors $\frakM \mid \frakN_{-}$ such that $v_\frakp(\frakM)$ and $v_\frakp(\frakN)$ have the same parity for all $\frakp\mid\frakN_{-}$.
\end{theorem}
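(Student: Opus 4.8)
The plan is to reduce the statement to the classical Eichler--Shimizu--Jacquet--Langlands correspondence, applied one Atkin--Lehner-type piece at a time, together with a careful bookkeeping of which newspaces appear. The starting point is the standard Jacquet--Langlands correspondence for definite quaternion algebras: for a maximal order, $S_k(\widehat{\calO}) \simeq S_k^{\frakD\text{-new}}(\widehat{\Gamma}_0(\frakD))$, and more generally for an Eichler order of level $\frakN_{+}$ inside such $B$, $S_k(\widehat{\calO})$ is Hecke-isomorphic to the $\frakD$-new subspace of $S_k(\widehat{\Gamma}_0(\frakD\frakN_{+}))$. The content of our theorem is the extra contribution coming from the residually inert primes (those dividing $\frakN_{-}$, in particular those dividing $\frakD$), where the order is no longer Eichler: at such $\frakp$ the local order $\calO_\frakp \simeq \quat{S_\frakp,\pi^e}{R_\frakp}$ with $e = v_\frakp(\frakN)$ has the shape recorded in~\eqref{eqn:Opembed}, and one must identify the local representation-theoretic contribution of $\calOhat_\frakp^\times$ as a Hecke module.

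First I would treat each residually inert prime $\frakp \mid \frakN_{-}$ locally. The key local input, due to Pizer \cite{Pizer} (cf.\ \cite[\S24.3]{Voight:quatbook}), is the computation of the space of $\calO_\frakp^\times$-fixed vectors in an irreducible admissible representation $\pi_\frakp$ of $B_\frakp^\times$ (or of $\GL_2(F_\frakp)$ when $B_\frakp$ is split). When $B_\frakp$ is a division algebra, $e$ is odd and the local condition is that $\pi_\frakp$ is a twist of the Steinberg or a ramified supercuspidal of the appropriate conductor; when $B_\frakp \simeq \M_2(F_\frakp)$, $e$ is even and the order interpolates between Eichler orders, picking up newforms of conductor $\frakp^{f}$ for a controlled range of $f$ with $f \equiv e \pmod 2$. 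In both cases the upshot is that, locally at $\frakp$, the multiplicity of a given $\pi_\frakp$ in functions on $B_\frakp^\times/\calO_\frakp^\times$ equals the number of divisors $\frakp^{m} \parallel \frakM$ with $0 \le m \le e$, $m \equiv e \pmod 2$, such that $\pi_\frakp$ has conductor exponent $m$ (at the split primes; with the evident division-algebra analogue). This is exactly the parity condition on $\frakM$ in the statement.

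Next I would globalize: write the space of automorphic forms on $\Bhat^\times$ of level $\calOhat^\times$ as a restricted tensor product over all places, plug in the local multiplicity computation above at each $\frakp \mid \frakN$ (trivial local condition away from $\frakN$, Eichler computation at $\frakp \mid \frakN_{+}$), and collect terms according to the global conductor $\frakM\frakN_{+}$; then transport through classical Jacquet--Langlands to land in spaces of Hilbert cusp forms, noting that Hecke operators at primes away from $\frakN$ are matched place-by-place and the Eisenstein/cuspidal dichotomy is preserved (as already observed in the text via the norm map). The main obstacle, and where the real work lies, is the local analysis at the residually inert primes dividing $\frakN_{-}$ --- in particular verifying that the order~\eqref{eqn:Opembed} is the correct "residually inert" order and that its fixed-vector multiplicities are given precisely by the parity-constrained divisor count; this is where I would lean hardest on Pizer's local computations and the embedding $\calO_\frakp \otimes S_\frakp \hookrightarrow \M_2(S_\frakp)$ recorded above, reducing the inert case to an Eichler-order computation over the unramified quadratic extension. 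Assembling the Hecke-equivariance of the resulting bijection is then routine given strong approximation and the compatibility of the local Hecke algebras.
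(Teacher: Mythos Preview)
Your sketch is reasonable and in the right spirit, but you should be aware that the paper does not actually prove this theorem: its proof consists entirely of pointers to Jacquet--Langlands \cite[Chap.~XVI]{jacqlang}, Gelbart--Jacquet \cite[\S 8]{geljac}, Hida \cite[Proposition 2.12]{HidaCM}, and most directly Martin \cite[Theorem 1.1(ii)]{Martin}, who proves exactly this statement for residually unramified orders. So the paper treats \Cref{thm:ESJL} as a black box imported from the literature.

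Your outline --- start from the classical correspondence for Eichler orders, handle each residually inert prime via a local fixed-vector computation, then globalize through the restricted tensor product decomposition of automorphic forms --- is precisely the strategy of those references, Martin's in particular. You have also correctly located the crux: the local multiplicity at $\frakp \mid \frakN_{-}$. What one needs is that for the residually inert local order of exponent $e$, the space of $\calO_\frakp^\times$-fixed vectors in an irreducible admissible $\pi_\frakp$ is at most one-dimensional, and is nonzero precisely when the conductor exponent $m$ of $\pi_\frakp$ (or of its Jacquet--Langlands transfer, in the division case) satisfies $0 \le m \le e$ and $m \equiv e \pmod{2}$. Your instinct to lean on Pizer and on the base-change-to-Eichler device via \eqref{eqn:Opembed} is sound; the cleanest modern account of this local computation and its globalization is in Martin's paper, which is why the authors cite it rather than reprove it.
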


\begin{proof}
See Jacquet--Langlands~\cite[Chap. XVI]{jacqlang}, Gelbart--Jacquet \cite[\S 8]{geljac}, and Hida \cite[Proposition 2.12]{HidaCM}, and more recently Martin \cite[Theorem 1.1(ii)]{Martin}.
\end{proof}


For these nice lattices and orders, we have a simple description of the normalizer.

\begin{prop} \label{prop:alltheAL}
Suppose that $\calO$ is residually unramified at $\frakp$.  Then the following statements hold.
\begin{enumalph}
\item
The Atkin--Lehner group $\AL(\calO_\frakp)$ has order $2$, generated by $w_\frakq$ where $\frakq=\frakp^e \parallel \frakN$.
\item Under the correspondence with Hilbert modular forms \eqref{eqn:JL}, the involution $w_\frakq$ on $S_k(\calOhat^\times)$ corresponds to either the Atkin--Lehner involution on $S_k(\widehat{\Gamma}_0(\frakN))$ or its negative, according as $\frakp \mid \frakD$ or not.  
\end{enumalph}
\end{prop}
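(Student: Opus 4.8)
The plan is to treat the two parts locally at $\frakp$, since both the Atkin--Lehner group and the statement about the correspondence are built from local data (the normalizer mod $\Fhat^\times\calOhat^\times$ factors over primes by \eqref{eqn:ALOHat}, and the Jacquet--Langlands isomorphism \eqref{eqn:JL} is Hecke-equivariant prime by prime). For part (a), I would split into the residually split (Eichler) and residually inert cases. In the residually split case, $\calO_\frakp$ is a local Eichler order of level $\frakp^e$ in $\M_2(F_\frakp)$, and the normalizer $N_{B_\frakp^\times}(\calO_\frakp)/(F_\frakp^\times\calO_\frakp^\times)$ is the standard cyclic group of order $2$ generated by the Atkin--Lehner element $\begin{pmatrix} 0 & 1 \\ \pi^e & 0 \end{pmatrix}$ (up to scaling); this is classical, see \cite[\S23.3, \S28.9]{Voight:quatbook}. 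In the residually inert case, I would use the explicit embedding \eqref{eqn:Opembed}: writing $\calO_\frakp \simeq \quat{S_\frakp,\pi^e}{R_\frakp}$ and using that $\Nm_{S_\frakp|R_\frakp}(S_\frakp^\times)=R_\frakp^\times$, the element $j$ (mapping to $\begin{pmatrix}0&1\\ \pi^e&0\end{pmatrix}$) normalizes $\calO_\frakp$ and has $\nrd(j)=-\pi^e$, hence generates a group of order $2$ modulo $F_\frakp^\times\calO_\frakp^\times$; one checks there is nothing more by a reduced-norm/valuation argument (any normalizing element has reduced norm in $\pi^{\Z}R_\frakp^\times$, and those with even valuation lie in $F_\frakp^\times\calO_\frakp^\times$ because $\Nm(S_\frakp^\times)=R_\frakp^\times$ and $\calO_\frakp$ is residually unramified — this is essentially \cite[24.5.3, 24.5.5]{Voight:quatbook}). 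I would also record here that when $B_\frakp$ is a division algebra the exponent $e$ is odd and the unique maximal order is already Atkin--Lehner-generated in the same way.

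For part (b), the key input is tracking the action of the local Atkin--Lehner element $w_\frakq$ across the Jacquet--Langlands correspondence \eqref{eqn:JL}. On an automorphic representation $\pi$ of $\GL_2$ with local component $\pi_\frakp$ a twist-minimal special or supercuspidal representation (which is the situation at $\frakp \mid \frakN_-$, since $\frakp$ residually inert forces $v_\frakp(\frakN)$ odd or, when $B_\frakp$ split, $\pi_\frakp$ a ramified principal series/twist of Steinberg), the Atkin--Lehner eigenvalue is computed by the local functional equation / epsilon factor $\varepsilon(1/2,\pi_\frakp)$ up to a controlled sign. The Jacquet--Langlands transfer $\pi_\frakp \mapsto \pi_\frakp^B$ multiplies this local root number by $-1$ exactly when $B_\frakp$ is ramified (this is the local Jacquet--Langlands sign, see \cite[Chap.~XVI]{jacqlang} and \cite[\S8]{geljac}). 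So the plan is: (i) identify the $w_\frakq$-eigenvalue on $S_k(\calOhat^\times)$ at the level of local representations with the normalized local root number of the corresponding $\GL_2$-representation, using the explicit model \eqref{eqn:Opembed} to see that $j$ acts as the analogue of the classical Atkin--Lehner operator; (ii) invoke the local Jacquet--Langlands relation to conclude the eigenvalue is the Hilbert-modular Atkin--Lehner eigenvalue times $(-1)$ if and only if $\frakp \mid \frakD$. Alternatively, and perhaps more cleanly for our purposes, one can phrase (i)--(ii) entirely in terms of the two-sided ideal generated by $j$: its class in $\Cls\calO / \!\sim$ governs the permutation on the class set, and comparing reduced norms ($\nrd(j) = -\pi^e$ versus the Hilbert side's $\pi^e$) produces exactly the sign $\left(\tfrac{-1}{\ }\right)$-type discrepancy accounting for ramification.

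The main obstacle will be step (b)(ii): making the local Jacquet--Langlands sign bookkeeping rigorous and matching conventions, since the Atkin--Lehner operator, the local root number, and the JL transfer each carry normalization choices (half-integral shifts in the weight, the twist $(-k_v/2-1)$ in $W_k$, the choice of uniformizer, orientation of the ideal $j\calO_\frakp$) and the claimed sign $\frakp\mid\frakD$ versus $\frakp\nmid\frakD$ is precisely the net effect of all of these. I would handle this by reducing to a single clean local statement — that on a residually inert order the generator $w_\frakq = j$ of $\AL(\calO_\frakp)$ acts on the $\frakp$-local JL-matching with eigenvalue $-\eta_\frakp$, where $\eta_\frakp$ is the Hilbert Atkin--Lehner eigenvalue and the extra $-1$ comes from $\nrd(j)$ being a non-norm from $S_\frakp$ exactly when $B_\frakp$ is division — and then citing \cite[Theorem 1.1(ii)]{Martin} (which already incorporates this sign for $F=\Q$) together with Hida \cite[Proposition 2.12]{HidaCM} to transport it to general totally real $F$. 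Everything else (part (a), the factorization over primes, the preservation of the Eisenstein subspace) is routine given the structure theory already recalled in the excerpt.
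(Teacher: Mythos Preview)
Your proposal is correct and broadly follows the paper's strategy, but there are two places where the tactics diverge that are worth flagging.

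For part (a) in the residually inert case, you plan a direct reduced-norm/valuation argument to bound the normalizer. The paper instead uses a shortcut: the embedding \eqref{eqn:Opembed} realizes $\calO_\frakp \otimes S_\frakp$ as a residually \emph{split} (Eichler) order over $S_\frakp$, so its normalizer is already known from the Eichler case, and one simply intersects $N_{B_{K_\frakp}^\times}(\calO_{S_\frakp})$ back with $B_\frakp^\times$, checking that $j$ survives and that the rest is $\calO_{S_\frakp}^\times \cap B_\frakp^\times = \calO_\frakp^\times$. Your direct argument works too, but the paper's base-change trick avoids re-deriving the normalizer structure and reduces everything to the single Eichler computation.

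For part (b), you set out to track local root numbers and the Jacquet--Langlands sign flip explicitly, worrying (rightly) about normalization conventions. The paper does not carry this out: it simply cites Jacquet--Langlands \cite[Proposition 15.5]{jacqlang} and Gelbart--Jacquet \cite[Theorem (8.1)]{geljac} for the representation-theoretic statement, and mentions B\"ocherer--Schulze-Pillot \cite[Lemma 8.2]{BSP} as an alternative route via theta series. What you sketch is essentially the content behind those citations, so your plan is not wrong---just more work than the paper undertakes. If you want a self-contained argument, your epsilon-factor outline is the right shape; if you are happy to cite, the references above suffice and sidestep the bookkeeping you flag as the main obstacle.
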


\begin{proof}
We begin with (a).  When $\calO_\frakp$ is residually split (Eichler) the result is well-known \cite[Proposition 23.4.14]{Voight:quatbook}.  The residually inert case can be derived from this, as follows.  The inclusion \eqref{eqn:Opembed} provides an inclusion $N_{B_\frakp^\times}(\calO_\frakp) \hookrightarrow N_{B_{K_\frakp}^\times}(\calO_{S_\frakp})$.  We check that indeed $j \in N_{B_\frakp^\times}(\calO_\frakp)$, and the rest is $\calO_{S_\frakp}^\times \cap B_\frakp^\times = \calO^\times$.  

Part (b) follows from work of Jacquet--Langlands \cite[Proposition 15.5]{jacqlang}, see also \cite[Theorem (8.1)]{geljac} Gelbart--Jacquet).  The result may also be derived via theta series using work of B\"ocherer--Schulze-Pillot \cite[Lemma 8.2]{BSP}.
\end{proof}

As in the introduction, a \defi{sign vector} (for $\frakN$) is 
\begin{equation}
\varepsilon = (\varepsilon_\frakp)_\frakp \in \prod_{\frakp^e \parallel \frakN} \{\pm 1\}.
\end{equation}
For a sign vector, we define
\begin{equation}
S_k(\widehat{\calO})^\varepsilon =
\{ f\in S_k(\widehat{\calO}) : w_\frakq f = \varepsilon_\frakp f \}
\end{equation}

We conclude this section with newform theory, with attention to Atkin--Lehner eigenvalues.  The algorithmic application will be to knowing precisely when we have carved away the old subspace.


\begin{defn}
Let $\alpha$ be the multiplicative function on divisors of $\frakN$ defined by
\[
\alpha(\frakp^e) =
\begin{cases}
1 + e, & \text{if $\frakp\mid\frakN_{+}$;} \\
1, & \text{if $\frakp\mid\frakN_{-}$ and $e$ is even; and} \\
0, & \text{if $\frakp\mid\frakN_{-}$ and $e$ is odd.} \\
\end{cases}
\]
\end{defn}

The function $\alpha$ records multiplicities in the newform decomposition as follows.  
 
\begin{theorem} \label{thm:skOtimes}
We have
\[
    S_k(\widehat{\calO}^\times) \simeq
    \bigoplus_{\frakM\mid\frakN}
    S_k^{\rm new}(\widehat{\Gamma}_0(\frakM))^{\oplus \alpha(\frakN/\frakM)}
\]
\end{theorem}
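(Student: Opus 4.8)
The plan is to combine the Eichler--Shimizu--Jacquet--Langlands correspondence of \Cref{thm:ESJL} with the classical newform decomposition of the spaces $S_k^{\textup{$\frakM$-new}}(\widehat{\Gamma}_0(\frakN'))$ and then bookkeep the multiplicities. First I would start from \eqref{eqn:JL}, which gives
\[
S_k(\widehat{\calO}) \simeq \bigoplus_{\frakM} S_k^{\textup{$\frakM$-new}}(\widehat{\Gamma}_0(\frakM\frakN_{+})),
\]
the sum over divisors $\frakM \mid \frakN_{-}$ with $v_\frakp(\frakM) \equiv v_\frakp(\frakN) \psmod{2}$ for all $\frakp \mid \frakN_{-}$. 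Next I would apply the classical decomposition of each summand into newspaces: writing $\frakN' = \frakM\frakN_{+}$, the subspace of $S_k(\widehat{\Gamma}_0(\frakN'))$ that is $\frakp$-new for all $\frakp \mid \frakM$ decomposes as a direct sum of $S_k^{\textup{new}}(\widehat{\Gamma}_0(\frakL))$ over divisors $\frakL$ of $\frakN'$ which are divisible by $\frakM$ (so that $\frakp$-newness at $\frakp \mid \frakM$ is preserved), with the multiplicity of $S_k^{\textup{new}}(\widehat{\Gamma}_0(\frakL))$ equal to the number of divisors of $\frakN'/\frakL$ — equivalently $\prod_{\frakp \mid \frakN_{+}} (1 + v_\frakp(\frakN') - v_\frakp(\frakL))$, coming from the degeneracy maps at primes dividing $\frakN_{+}$. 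This is the standard Atkin--Lehner--Li newform theory, applied over $F$.

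The heart of the argument is then a combinatorial reindexing: collecting all pairs $(\frakM, \frakL)$ that contribute a copy of $S_k^{\textup{new}}(\widehat{\Gamma}_0(\frakL))$ and showing the total multiplicity equals $\alpha(\frakN/\frakL)$. I would work prime by prime, since $\alpha$ is multiplicative and both the JL sum and the newform multiplicities factor over primes. Fix a prime $\frakp^e \parallel \frakN$ and let $f = v_\frakp(\frakL)$ be the target newlevel exponent at $\frakp$; I would tabulate the local contribution in three cases. If $\frakp \mid \frakN_{+}$: then $\frakp \nmid \frakM$ always, $\frakN' = \frakM\frakN_{+}$ has $v_\frakp(\frakN') = e$, and the local multiplicity is the number of choices $0 \leq f \leq e$ with weight $1$ each and total count $e+1-f$ for a fixed $f$; matching $f = v_\frakp(\frakL)$ and $v_\frakp(\frakN/\frakL) = e - f$, this gives $e+1-(e-f) $... more directly, $\alpha(\frakp^{e-f}) = 1 + (e-f)$, which is exactly the number of degeneracy-map copies. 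If $\frakp \mid \frakN_{-}$ and $e$ is odd: the parity constraint $v_\frakp(\frakM) \equiv e$ forces $v_\frakp(\frakM) \in \{1, 3, \dots\}$ to be odd and $\leq e$; but $\frakp \nmid \frakN_{+}$ so no degeneracy maps occur at $\frakp$, hence $v_\frakp(\frakL) = v_\frakp(\frakM)$ must equal $f$, i.e. $f$ is determined to be odd; when $e - f$ is odd (so $\frakN/\frakL$ has odd exponent at $\frakp$) this is consistent and contributes exactly $1$, matching $\alpha(\frakp^{e-f}) = 0$ only when... — here I need to check carefully that $\alpha(\frakp^{e-f}) = 1$ precisely when $e-f$ is even, i.e. $f \equiv e \psmod 2$; since $e$ is odd this says $f$ odd, the unique admissible value modulo the parity constraint giving multiplicity one. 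If $\frakp \mid \frakN_{-}$ and $e$ is even: similarly $v_\frakp(\frakM)$ ranges over even values $\leq e$, no degeneracy contributions, so $f = v_\frakp(\frakM)$ is even and $\alpha(\frakp^{e-f}) = 1$ exactly when $e - f$ is even, which holds; again multiplicity one.

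The main obstacle I anticipate is getting the bookkeeping in the residually inert cases exactly right — in particular making sure that the parity restriction on $\frakM$ in \eqref{eqn:JL} matches precisely the support condition "$\alpha(\frakp^e) = 0$ when $\frakp \mid \frakN_{-}$ and $e$ odd," and that there really are no degeneracy/oldform maps at primes $\frakp \mid \frakN_{-}$ (which is where the constraint "residually unramified" and the structure of $S_k^{\textup{$\frakM$-new}}$ matters: $\frakp$-newness is imposed at every $\frakp \mid \frakM$, and at $\frakp \mid \frakN_{-} \smallsetminus \frakM$ the parity forces $v_\frakp(\frakN') = v_\frakp(\frakN)$ with $\frakp \nmid \frakN_{+}$, so no room for oldforms). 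Once the three local cases are verified, multiplicativity of $\alpha$ and of the newform multiplicity function assembles them into the global identity $\bigoplus_{\frakL} S_k^{\textup{new}}(\widehat{\Gamma}_0(\frakL))^{\oplus \alpha(\frakN/\frakL)}$, after renaming $\frakL$ as $\frakM$ in the statement. I would also remark that the Hecke-equivariance is inherited directly from \Cref{thm:ESJL} and from the Hecke-equivariance of the classical degeneracy maps, so no separate argument is needed there.
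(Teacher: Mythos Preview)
Your approach is essentially the same as the paper's: start from \Cref{thm:ESJL}, decompose each summand into newspaces via Atkin--Lehner theory, and match multiplicities to $\alpha$ prime by prime using multiplicativity. The paper organizes the argument more cleanly by first absorbing the parity constraint on $\frakM$ into the function $\alpha$ at primes dividing $\frakN_{-}$ (rewriting \eqref{eqn:JL} as $\bigoplus_{\frakM_{-}\mid\frakN_{-}} S_k^{\textup{$\frakM_{-}$-new}}(\widehat{\Gamma}_0(\frakM_{-}\frakN_{+}))^{\oplus \alpha(\frakN_{-}/\frakM_{-})}$), and only then applying newform theory at primes dividing $\frakN_{+}$; this separation avoids the case-by-case stumbles you encountered. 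One small correction to your bookkeeping: your parenthetical claim that ``at $\frakp \mid \frakN_{-} \smallsetminus \frakM$ the parity forces $v_\frakp(\frakN') = v_\frakp(\frakN)$'' is wrong---in that situation $v_\frakp(\frakN') = 0$ while $v_\frakp(\frakN) = e$ is even and positive---but this does not affect the multiplicity count, since then $v_\frakp(\frakL) = 0$ and $\alpha(\frakp^{e}) = 1$ as required.
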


\begin{proof}
First, we rewrite \Cref{thm:ESJL} as 
\begin{equation} \label{eqn:skmnew0}
    S_k(\widehat{\calO}^\times) \simeq
    \bigoplus_{\frakM_{-}\mid\frakN_{-}}
    S_k^{\textup{$\frakM_{-}$-new}}(\widehat{\Gamma}_0(\frakM_{-}\frakN_{+}))^{\oplus \alpha(\frakN_{-}/\frakM_{-})}.  
\end{equation}
Second, we apply the usual Atkin--Lehner theory for Hilbert modular forms, see Roberts--Schmidt \cite[p.~4]{RobertsSchmidt} for a succinct summary.  We obtain
\begin{equation} \label{eqn:skmnew}
    S_k^{\text{$\frakM_{-}$-new}}(\widehat{\Gamma}_0(\frakM_{-}\frakN_{+}))
    \simeq
    \bigoplus_{\frakM_{+}\mid\frakN_{+}}
    S_k^{\rm new}(\widehat{\Gamma}_0(\frakM_{-}\frakM_{+}))^{\oplus \alpha(\frakN_{+}/\frakM_{+})}.
\end{equation}
We conclude by substituting \eqref{eqn:skmnew} into \eqref{eqn:skmnew0}, using multiplicativity of $\alpha$, and rewriting divisors as $\frakM = \frakM_{-}\frakM_{+} \mid \frakN_{-}\frakN_{+}=\frakN$.
\end{proof}

Now we refine this theorem to take Atkin--Lehner signs into account.  For $\frakp \mid \frakN_{-}$, there is no change in the Atkin--Lehner signs under degeneracy operators; for $\frakp \mid \frakN_{+}$, the possible Atkin--Lehner signs are as balanced as possible.  

\begin{defn}
For a sign vector $\varepsilon$, let $\alpha^\varepsilon$ be the multiplicative function on divisors of $\frakN$ such that
\[
\alpha^\varepsilon(\frakp^e) =
\begin{cases}
\left\lceil\frac{1 + e}2\right\rceil, & \text{if $\frakp \mid \frakN_+$ and $\varepsilon_\frakp = +1$;} \\
\left\lfloor\frac{1 + e}2\right\rfloor, & \text{if $\frakp \mid \frakN_+$ and $\varepsilon_\frakp = -1$;} \\
1, & \text{if $\frakp\mid\frakN_{-}$, $e$ is even, and $\varepsilon_\frakp=+1$; and} \\
0, & \text{if $\frakp\mid\frakN_{-}$, otherwise} \\
\end{cases}
\]
\end{defn}

Considering cases, it is straightforward to show that $\sum_\varepsilon \alpha^\varepsilon = \alpha$.

\begin{prop} \label{prop:ALSK}
Let $\varepsilon$ be a sign vector.  Then
\begin{equation} \label{eqn:Skimis}
    S_k^{\frakM_{-}\textup{-}\rm new}(\widehat{\Gamma}_0(\frakM_{-}\frakN_{+}))^\varepsilon
    \simeq
    \bigoplus_{\frakM_{+}\mid\frakN_{+}}
    \bigoplus_{\varepsilon'}
    \bigl(S_k^{\rm new}(\widehat{\Gamma}_0(\frakM_{-}\frakM_+))^{\varepsilon'}\bigr)^{\oplus \alpha^{\varepsilon\varepsilon'}(\frakN_{+}/\frakM_{+})}
\end{equation}
the sum over all sign vectors $\varepsilon'$.
\end{prop}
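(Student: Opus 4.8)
The plan is to upgrade the degeneracy‑operator decomposition \eqref{eqn:skmnew} --- the one used in the proof of \Cref{thm:skOtimes} --- to an isomorphism that is equivariant for all the Atkin--Lehner involutions $w_\frakq$, $\frakq=\frakp^e\parallel\frakM_{-}\frakN_{+}$, and then to read off the eigenvalue multiplicities one prime at a time. First I would observe that \eqref{eqn:skmnew} is assembled from the degeneracy maps $\iota_\frakd$ with $\frakd\mid\frakN_{+}/\frakM_{+}$, all of which are supported only at primes dividing $\frakN_{+}$; since $\frakM_{-}$ is coprime to $\frakN_{+}$ these commute with $w_{\frakp^e}$ for every $\frakp\mid\frakM_{-}$, and by the theory of newforms all the $w_\frakq$ commute with one another and with the Hecke operators. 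Hence \eqref{eqn:skmnew} may be chosen compatibly with the joint $w_\frakq$-eigenspace decompositions of both sides, and the problem reduces to determining, prime by prime, how $w_{\frakp^{v_\frakp(\frakM_{-}\frakN_{+})}}$ acts on the span of the $\iota_\frakd$-images (in the $\frakp$-direction) of a fixed newform constituent.

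For $\frakp\mid\frakN_{-}$ this is immediate: since $\frakp\nmid\frakN_{+}$ the $\frakp$-part of the level does not grow along \eqref{eqn:skmnew}, no $\iota_\frakd$ is supported at $\frakp$, and each newform constituent already carries its full $\frakp$-level $v_\frakp(\frakM_{-})=v_\frakp(\frakM_{-}\frakM_{+})$. Thus $w_{\frakp^{v_\frakp(\frakM_{-})}}$ acts with exactly the eigenvalue it has on that newform: the Atkin--Lehner sign at $\frakp$ is transported unchanged, which is consistent with the right‑hand side of \eqref{eqn:Skimis}, where $\alpha^{\varepsilon\varepsilon'}(\frakN_{+}/\frakM_{+})$ contributes a trivial factor at every prime away from $\frakN_{+}$.

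The substance is at $\frakp\mid\frakN_{+}$. Fix such a prime, put $e=v_\frakp(\frakN_{+})$ and $f=v_\frakp(\frakM_{+})$, and let $g$ be a newform whose $\frakp$-conductor is $\frakp^{f}$ and whose $\frakp$-Atkin--Lehner eigenvalue is $\eta$ (for $f=0$, $g$ is unramified at $\frakp$ and one reads the formula below with $\eta=+1$). The vectors $\iota_{\frakp^{j}}(g)$ for $0\le j\le e-f$ span the full space of oldforms of $g$ at $\frakp$-level $\frakp^{e}$, and by the standard local Atkin--Lehner theory for $\GL_2$, valid verbatim at each completion of a totally real field (see \cite{RobertsSchmidt}; cf.\ also \cite[\S 23, \S 24]{Voight:quatbook}), the operator $w_{\frakp^{e}}$ acts on this $(e-f+1)$-dimensional space, in the standard normalization, as $\eta$ times the order‑reversing involution $\iota_{\frakp^{j}}(g)\mapsto\iota_{\frakp^{\,e-f-j}}(g)$. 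Such an involution has $\lceil(e-f+1)/2\rceil$ eigenvectors of eigenvalue $+1$ and $\lfloor(e-f+1)/2\rfloor$ of eigenvalue $-1$; hence $g$ contributes $\lceil(e-f+1)/2\rceil$ copies of $w_{\frakp^{e}}$-sign $\eta$ and $\lfloor(e-f+1)/2\rfloor$ copies of sign $-\eta$. Writing $\varepsilon'_\frakp=\eta$ and unwinding the definition of $\alpha^{\varepsilon\varepsilon'}$, these are precisely the multiplicities $\alpha^{\varepsilon\varepsilon'}(\frakp^{e-f})$: the target sign $\varepsilon_\frakp=\eta$ forces $\varepsilon_\frakp\varepsilon'_\frakp=+1$ and gives the ceiling, while $\varepsilon_\frakp=-\eta$ gives the floor --- the sense in which the Atkin--Lehner signs are ``as balanced as possible''.

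Finally, multiplying the local contributions of the two cases over all $\frakp\mid\frakM_{-}\frakN_{+}$ and grouping the newform constituents by their level $\frakM_{-}\frakM_{+}\mid\frakN$ and by their Atkin--Lehner sign vector $\varepsilon'$ yields \eqref{eqn:Skimis}; multiplicativity of $\alpha^{\varepsilon\varepsilon'}$ is exactly what packages the per‑prime counts into the single exponent $\alpha^{\varepsilon\varepsilon'}(\frakN_{+}/\frakM_{+})$, and as a consistency check, summing \eqref{eqn:Skimis} over all $\varepsilon$ recovers \eqref{eqn:skmnew} via the identity $\sum_\varepsilon\alpha^\varepsilon=\alpha$. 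I expect the only genuinely technical point to be the claim in the previous paragraph that $w_{\frakp^{e}}$ acts on the $\frakp$-oldspace of $g$ as $\eta$ times the order‑reversing involution --- equivalently, that the scalar relating $w_{\frakp^{e}}\circ\iota_{\frakp^{j}}$ and $\iota_{\frakp^{\,e-f-j}}\circ w_{\frakp^{f}}$ is independent of $j$ and equal to $1$ --- since this is what pins down the ceiling‑versus‑floor assignment relative to the chosen normalization of the $w_\frakq$; everything else is bookkeeping with multiplicative functions.
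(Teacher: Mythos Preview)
Your proposal is correct and follows the same route the paper takes: the paper's own proof is a one-line appeal to standard Atkin--Lehner newform theory for Hilbert modular forms, citing Roberts--Schmidt \cite[p.~4]{RobertsSchmidt}, and what you have written is precisely an unpacking of that standard argument---refining the degeneracy decomposition \eqref{eqn:skmnew} prime by prime, observing that at $\frakp\mid\frakM_-$ the signs pass through unchanged while at $\frakp\mid\frakN_+$ the Atkin--Lehner involution reverses the order of the degeneracy images, yielding the ceiling/floor counts encoded in $\alpha^{\varepsilon\varepsilon'}$. You have also correctly flagged the one point that requires genuine input (the intertwining relation $w_{\frakp^e}\circ\iota_{\frakp^j}=\iota_{\frakp^{\,e-f-j}}\circ w_{\frakp^f}$ on newforms), which is exactly what the cited reference supplies.
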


\begin{proof}
This is standard newform theory of Atkin--Lehner in the case of Hilbert modular forms: see e.g.\ Roberts--Schmidt \cite[p.~4]{RobertsSchmidt} for a concise statement.
\end{proof}

\begin{cor}
Let $\delta$ be the sign vector with $\delta_{\frakp}=-1$ if and only if $\frakp \mid \frakD$.  Then we have
\begin{equation} \label{eqn:Skimiseps}
   S_k(\widehat{\calO}^\times)^{\varepsilon} \simeq
    \bigoplus_{\frakM \mid \frakN}
    \bigoplus_{\varepsilon'} \bigl(S_k^{\rm new}(\widehat{\Gamma}_0(\frakM))^{\delta\varepsilon\varepsilon'}\bigr)^{\oplus \alpha^{\delta\varepsilon\varepsilon'}(\frakN/\frakM)}. 
    \end{equation}
\end{cor}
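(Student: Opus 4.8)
The plan is to deduce the corollary by combining \Cref{thm:skOtimes} (or rather its Atkin--Lehner-refined precursors \eqref{eqn:skmnew0} and \Cref{prop:ALSK}) with the sign-translation recorded in \Cref{prop:alltheAL}(b). First I would take the decomposition \eqref{eqn:skmnew0}, which expresses $S_k(\widehat{\calO}^\times)$ as a sum over $\frakM_{-}\mid\frakN_{-}$ of $\frakM_{-}$-new spaces at level $\frakM_{-}\frakN_{+}$ with multiplicity $\alpha(\frakN_{-}/\frakM_{-})$. Since all the isomorphisms are Hecke-equivariant and compatible with the $w_\frakq$ action, passing to the $\varepsilon$-eigenspace commutes with the direct sum; but one must be careful that the $w_\frakq$ on the quaternionic side corresponds to the Atkin--Lehner involution on the Hilbert side twisted by $\delta$ (the sign that is $-1$ exactly at $\frakp\mid\frakD$), by \Cref{prop:alltheAL}(b). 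So the quaternionic $\varepsilon$-eigenspace matches the Hilbert $\delta\varepsilon$-eigenspace (interpreting the sign vectors on the appropriate index sets; at primes $\frakp\nmid\frakM_-\frakM_+$ dividing $\frakN_-/\frakM_-$ the "old'' Atkin--Lehner sign is rigid and forced, as noted just before the definition of $\alpha^\varepsilon$).

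Next I would plug in \Cref{prop:ALSK}: for each $\frakM_{-}$, the space $S_k^{\frakM_{-}\text{-}\mathrm{new}}(\widehat{\Gamma}_0(\frakM_{-}\frakN_{+}))$ with a prescribed Atkin--Lehner sign decomposes over $\frakM_{+}\mid\frakN_{+}$ and auxiliary sign vectors $\varepsilon'$ into genuinely new spaces $S_k^{\mathrm{new}}(\widehat{\Gamma}_0(\frakM_{-}\frakM_{+}))^{\bullet}$ with multiplicity $\alpha^{\bullet}(\frakN_{+}/\frakM_{+})$, where the bullet is the appropriate product of signs. Substituting, writing $\frakM=\frakM_{-}\frakM_{+}\mid\frakN$, and using multiplicativity of $\alpha^{\varepsilon}$ (so that $\alpha(\frakN_{-}/\frakM_{-})\cdot\alpha^{\bullet}(\frakN_{+}/\frakM_{+})$ reassembles as $\alpha^{\delta\varepsilon\varepsilon'}(\frakN/\frakM)$ once the $\frakN_-$-part contributes its forced factor $1$), collapses the double sum over $(\frakM_-,\frakM_+)$ into a single sum over $\frakM\mid\frakN$ and a sum over global sign vectors $\varepsilon'$, yielding \eqref{eqn:Skimiseps}.

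The bookkeeping hinge—and the step I expect to be the main obstacle—is tracking the sign vectors consistently across the two reductions: the sign vector $\varepsilon$ lives on primes $\frakp^e\parallel\frakN$, but in \eqref{eqn:skmnew0} each summand only "sees'' the primes of $\frakM_{-}\frakN_{+}$, and in \Cref{prop:ALSK} the sign gets split as $\varepsilon\varepsilon'$ across $\frakM_-$-new-versus-oldform directions. One has to check that (i) at a prime $\frakp\mid\frakN_{-}$ with $\frakp\nmid\frakM_{-}$ and $e$ even, the rigidity of the old Atkin--Lehner sign forces exactly the combinatorics encoded by $\alpha^\varepsilon(\frakp^e)$ being $1$ or $0$ according to whether $\varepsilon_\frakp=+1$ or not (this is precisely the "no change under degeneracy operators'' remark preceding the definition of $\alpha^\varepsilon$); (ii) at $\frakp\mid\frakN_{+}$ the balanced counts $\lceil(1+e)/2\rceil$, $\lfloor(1+e)/2\rfloor$ of \Cref{prop:ALSK} are exactly $\alpha^{\varepsilon\varepsilon'}(\frakp^e)$; and (iii) the twist by $\delta$ from \Cref{prop:alltheAL}(b) is applied once, globally, and threads through both (i) and (ii) without double-counting. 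Once the indexing is nailed down, the identity $\sum_\varepsilon\alpha^\varepsilon=\alpha$ (noted right after the definition) gives a sanity check that summing \eqref{eqn:Skimiseps} over all $\varepsilon$ recovers \Cref{thm:skOtimes}, and the remaining manipulations—reindexing divisors and invoking multiplicativity—are routine.
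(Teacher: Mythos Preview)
Your proposal is correct and follows essentially the same route as the paper's own proof, which simply says to ``repeat the proof of \Cref{thm:skOtimes} but restrict to the Atkin--Lehner subspaces using \Cref{prop:ALSK} and \Cref{prop:alltheAL}(b).'' You have unpacked this instruction faithfully: start from \eqref{eqn:skmnew0}, pass to the $\varepsilon$-eigenspace with the $\delta$-twist from \Cref{prop:alltheAL}(b), substitute \Cref{prop:ALSK}, and reassemble via multiplicativity of $\alpha^{\varepsilon}$---and your identification of the bookkeeping at primes of $\frakN_{-}$ versus $\frakN_{+}$ is exactly the content the paper leaves implicit.
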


\begin{proof}
Repeat the proof of \Cref{thm:skOtimes} but restrict to the Atkin--Lehner subspaces using \Cref{prop:ALSK} and \Cref{prop:alltheAL}(b).
\end{proof}

\begin{remark}
It is also enough in \eqref{eqn:Skimis} and \eqref{eqn:Skimiseps} to sum only over those sign vectors $\varepsilon'$ such that $\varepsilon'_\frakp=1$ for all $\frakp \mid \frakN_-$.
\end{remark}

\section{Ternary quadratic forms and the even Clifford algebra} \label{sec:ternarycliff}

In this section, we review the even Clifford functor, relating ternary quadratic forms and quaternion orders.  We then observe that this functor gives a Hecke-equivariant isomorphism between certain spaces of orthogonal modular forms and quaternionic modular forms.  Importantly, we show that (because we are in odd rank) we can work equivalently with isometry classes or (twisted) similarity classes, the former being simpler for computation.  
We then restrict this result to a nice class of lattices---those with cyclic discriminant---corresponding to a nice class of quaternion orders---those which are locally residually unramified, as in the previous section.  For further background reading, see Voight \cite[Chapters 22--24]{Voight:quatbook}.

Let $R$ be a Dedekind domain with field of fractions $F \colonequals \Frac R$.  Let $V$ be an $F$-vector space with $\dim_F V =3$ and let $Q \colon V \to F$ be a nondegenerate quadratic form.  Recall that the \defi{similarity group} of $V$ is the algebraic group $\GO(V)$ consisting of linear maps that preserve the quadratic form $V$ up to a scalar called the \defi{similitude factor}, the \defi{isometry group} $\OO(V)$ of $V$ is the algebraic subgroup of similarities with similitude factor $1$ \cite[\S 4.2]{Voight:quatbook}, and the \defi{special} (or \defi{oriented}) \defi{isometry group} $\SO(V)$ of $V$ is the subgroup fixing an orientation.  

In fact, every similarity is in fact an oriented isometry composed with a scaling.

\begin{lem} \label{lem:SOV}
Multiplication gives an isomorphism
\begin{equation} \label{eqn:GOSO}
\SO(V) \times \GL_{1,F} \to \GO(V)
\end{equation}
of algebraic groups over $F$.  
\end{lem}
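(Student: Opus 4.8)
The statement is that multiplication $\SO(V)\times\GL_{1,F}\to\GO(V)$, sending $(g,\lambda)\mapsto \lambda g$, is an isomorphism of algebraic groups. The key input is that $V$ has odd dimension $3$, so that scalar matrices have nonzero determinant controlled by the cube of the scalar. I would first recall that the similitude character $\mu\colon\GO(V)\to\GL_{1,F}$ (sending a similarity $h$ to its similitude factor $\mu(h)$, with $Q(hx)=\mu(h)Q(x)$) fits into an exact sequence $1\to\OO(V)\to\GO(V)\xrightarrow{\mu}\GL_{1,F}$, and that the scalar homothety $\lambda\cdot\mathrm{id}_V$ is a similarity with similitude factor $\lambda^2$; since squaring $\GL_1\to\GL_1$ need not be surjective, this alone does not split $\mu$, but combined with the determinant it will. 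Concretely, for $h\in\GO(V)$ with similitude factor $\mu=\mu(h)$, the map $\mu^{-1}h$ lies in $\OO(V)$ only after we know $\mu$ is a square — which is exactly what odd rank buys us: taking determinants in $Q(hx)=\mu\,Q(x)$, i.e. comparing Gram matrices, gives $\det(h)^2=\mu^{3}\cdot(\text{unit})$, hence $\mu^3$ is a square, hence $\mu$ itself is a square in $F^\times$ (and functorially in any $F$-algebra, so this works at the level of schemes).

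**Main steps.** (1) Define $\varphi\colon\SO(V)\times\GL_{1,F}\to\GO(V)$ by $(g,\lambda)\mapsto\lambda g$ and check it is a homomorphism of algebraic groups, using that homotheties are central in $\GO(V)$ and that $\lambda g$ is a similarity with similitude factor $\lambda^2$. (2) Construct the inverse. Given $h\in\GO(V)(A)$ for an $F$-algebra $A$, set $\mu\colonequals\mu(h)\in A^\times$. Using the odd-rank identity $\det(h)^2 = \mu^3$ (up to the fixed Gram determinant, which cancels), define $\lambda\colonequals\det(h)/\mu\in A^\times$; one checks $\lambda^2=\mu$, so $g\colonequals\lambda^{-1}h$ satisfies $Q(gx)=\lambda^{-2}\mu\,Q(x)=Q(x)$, i.e. $g\in\OO(V)(A)$, and moreover $\det(g)=\lambda^{-3}\det(h)=\lambda^{-3}\cdot\lambda^3=1$, so $g\in\SO(V)(A)$. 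Then $h\mapsto(g,\lambda)$ is the desired inverse morphism — it is given by polynomial/rational formulas in the entries of $h$ and in $\mu(h)^{\pm1}$, hence is a morphism of schemes. (3) Verify $\varphi$ and this map are mutually inverse: one composite is $(g,\lambda)\mapsto(\,(\lambda g),\,\lambda\,)\mapsto\lambda^{-1}\lambda g=g$ using $\det(\lambda g)=\lambda^3\det(g)=\lambda^3$ and $\mu(\lambda g)=\lambda^2$ so the recovered scalar is $\lambda^3/\lambda^2=\lambda$; the other composite is immediate.

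**The main obstacle.** The only genuine subtlety is passing from "$\mu$ is a square after extending by $\det$" to a \emph{canonical, functorial} choice of square root — this is where oddness of the rank is essential, and it must be phrased so that $\lambda=\det(h)/\mu(h)$ is a well-defined regular function on $\GO(V)$ (not just on points), so that the inverse is a genuine morphism of algebraic groups rather than just a bijection on $\kbar$-points. Everything else is bookkeeping: the relation $\det(h)^2=\mu(h)^3$ requires fixing an orientation (a volume form) to make $\det$ and $\SO$ well-defined, but that is already built into the hypothesis that $V$ is oriented. I would also remark that this is why the construction fails in even rank: there $\det(h)^2=\mu(h)^{\dim V}$ gives no information forcing $\mu$ to be a square, and indeed $\GO(V)\not\simeq\SO(V)\times\GL_1$ in general. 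As a consequence one gets, as the paper will use, $\GO(V)/\GL_{1,F}\simeq\SO(V)$ and hence the identification of similarity classes of lattices with isometry classes of their scalings.
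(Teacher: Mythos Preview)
Your proposal is correct and follows essentially the same approach as the paper: both construct the inverse by setting $\lambda=\det(h)/\mu(h)$ (the paper writes $d/u$) via the odd-rank identity $\det(h)^2=\mu(h)^3$, and then $g=\lambda^{-1}h\in\SO(V)$. If anything, you are slightly more careful about phrasing this as a morphism of schemes, whereas the paper argues on $F$-points and then remarks that the same computation works over any extension.
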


\begin{proof}
Choose a basis $V \simeq F^3$ and let $[T]$ be the Gram matrix of the symmetric bilinear form $T$ associated to $Q$ in this basis and $A=[\phi]$.  Certainly we have a homomorphism, it is injective because if $cA=1$ then $c^3\det A = c^3=1$ but also $A^{\textsf{t}} [T] A = [T]$ so $c^2=1$ so in fact $c = 1$.  

To see the map is surjective, we define an inverse.  If $A \in \GL_3(F)$ has $A^{\textsf{t}} [T] A = u[T]$ with $u \in F^\times$, let $d \colonequals \det A$, then taking determinants and cancelling $\det [T] \in F^\times$ gives $d^2=u^3$ so $u=(d/u)^2$ is a square; whence $(u/d)A \in \SO(V)$ and so we can map $A \mapsto ((u/d)A,d/u)$.  

This shows the isomorphism on $F$-points; repeating this over an arbitrary extension gives the result on the level of algebraic groups.  
\end{proof}

\begin{remark}
\Cref{lem:SOV} generalizes to nondegenerate quadratic forms of any odd dimension.
\end{remark}

Let $\Lambda \subseteq V$ be an $R$-lattice.  An \defi{isometry} or \defi{similarity} from $\Lambda$ to an $R$-lattice $\Lambda'$ is such a map $\phi$ on $V$ such that $\phi(\Lambda)=\Lambda'$, and we define $\SO(\Lambda) \leq \OO(\Lambda) \leq \GO(\Lambda)$ for the corresponding stabilizers.  Restricting \eqref{lem:SOV} gives an isomorphism
\begin{equation} \SO(\Lambda) \times \GL_{1,R} \xrightarrow{\sim} \GO(\Lambda). \end{equation}

Attached to $\Lambda$ will be four possible class sets corresponding to the notions above---but with one asterisk, they all end up being in natural bijection.  

We first define the usual \defi{genus} $\Gen_{\OO}(\Lambda) = \Gen(\Lambda)$ of $\Lambda$, consisting of all lattices $\Lambda' \subseteq V$ such that $\Lambda_{(\frakp)}$ is isometric to $\Lambda'_{(\frakp)}$ for all (nonzero) primes $\frakp \subseteq R$, where the subscript denotes localization; then we define the \defi{class set} $\Cl_{\OO} \Lambda = \Cl \Lambda$ to be the (global) isometry classes in $\Gen(\Lambda)$.  

Second, we could replace isometry with oriented isometry to get $\Cl_{\SO} \Lambda$, in fact the natural inclusion map $\Cl_{\SO} \Lambda \to \Cl_{\OO} \Lambda$ is a bijection: we have $\OO(\Lambda) \simeq \{\pm 1 \} \times \SO(\Lambda)$, so two lattices are isometric if and only if they are oriented isometric, the same being true locally.  (This holds more generally in odd rank.)  

Third, we replace isometry with similarity, and get the \defi{similarity genus} $\Gen_{\GO}(\Lambda)$ (lattices locally similar to $\Lambda$) and \defi{similarity class set} $\Cl_{\GO} \Lambda$ (up to global similarity).  On $\Cl_{\GO} \Lambda$, we have an action of $\Cl R$ by $[\fraka] \cdot [\Lambda] = [\fraka \Lambda]$.  

\begin{defn}
The \defi{twist} of $\Lambda$ by a fractional ideal $\mathfrak{a}$ is the lattice $\mathfrak{a} \Lambda$.  
A \defi{twisted similarity} between two $R$-lattices $\Lambda,\Lambda' \subseteq V$ is a similarity between a twist of $\Lambda$ and $\Lambda'$.  
\end{defn}

Finally, we define analogously the \defi{twisted similarity class set}, the quotient set $(\Cl_{\GO} \Lambda)^{\Cl R}$.  

\begin{lem} \label{lem:OOGO}
There is a natural inclusion map $\Cl_{\OO} \Lambda \hookrightarrow \Cl_{\GO} \Lambda$ which induces a bijection $\Cl_{\OO} \Lambda \leftrightarrow (\Cl_{\GO} \Lambda)^{\Cl R}$.  
\end{lem}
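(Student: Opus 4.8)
## Proof proposal

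The plan is to leverage the decomposition $\OO(V) \simeq \{\pm 1\} \times \SO(V)$ together with the structure theorem $\GO(V) \simeq \SO(V) \times \GL_{1,F}$ from \Cref{lem:SOV}, applied both globally and locally at each prime $\frakp$. First I would construct the natural map: an isometry class $[\Lambda]_{\OO}$ maps to its similarity class $[\Lambda]_{\GO}$, which is well-defined since an isometry is in particular a similarity (with similitude factor $1$). Composing with the quotient by the $\Cl R$-action on $\Cl_{\GO}\Lambda$ gives the candidate map
\begin{equation}
\Cl_{\OO}\Lambda \to (\Cl_{\GO}\Lambda)^{\Cl R}.
\end{equation}
One must first check this lands in $\Gen_{\GO}(\Lambda)$, i.e., that genus is respected---but isometric localizations are a fortiori similar localizations, so this is immediate.

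For surjectivity, suppose $\Lambda' \in \Gen_{\GO}(\Lambda)$; I want to show that, after twisting by some fractional ideal $\fraka$, the lattice $\fraka\Lambda'$ lies in $\Gen_{\OO}(\Lambda)$. At each prime $\frakp$, fix a similarity $\phi_\frakp\colon \Lambda_{(\frakp)} \to \Lambda'_{(\frakp)}$ with similitude factor $u_\frakp \in F_\frakp^\times$. By \Cref{lem:SOV} applied over $F_\frakp$, the factor $u_\frakp$ is a square in $F_\frakp^\times$ (the key computation $d^2 = u^3$ in the proof of \Cref{lem:SOV} shows $u_\frakp = (d_\frakp/u_\frakp)^2$), so scaling $\phi_\frakp$ by $u_\frakp/d_\frakp$ produces an \emph{oriented isometry} $\Lambda_{(\frakp)} \to (u_\frakp/d_\frakp)\Lambda'_{(\frakp)}$. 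The scalars $(u_\frakp/d_\frakp)$ are trivial at almost all $\frakp$, so they assemble into a fractional ideal $\fraka$ via $v_\frakp(\fraka) = v_\frakp(u_\frakp/d_\frakp)$; then $\fraka\Lambda'$ is locally isometric to $\Lambda$ at every $\frakp$, hence $\fraka\Lambda' \in \Gen_{\OO}(\Lambda)$. Thus $[\Lambda']_{\GO}$ modulo $\Cl R$ is hit by $[\fraka\Lambda']_{\OO}$.

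For injectivity, suppose $[\Lambda_1]_{\OO}$ and $[\Lambda_2]_{\OO}$ have the same image, so there is a fractional ideal $\frakb$ and a global similarity $\psi\colon \frakb\Lambda_1 \to \Lambda_2$. Again by \Cref{lem:SOV} over $F$, the similitude factor of $\psi$ is a square, so after rescaling $\psi$ by a global scalar in $F^\times$ we obtain a global oriented isometry $\frakc\Lambda_1 \to \Lambda_2$ for some fractional ideal $\frakc$ (absorbing both $\frakb$ and the square root). Comparing with the local isometries $\Lambda_1{}_{(\frakp)} \simeq \Lambda_2{}_{(\frakp)}$ (both are in $\Gen_{\OO}(\Lambda)$) forces $v_\frakp(\frakc) = 0$ at every $\frakp$ by comparing, say, the scaling of the dual lattice or the value set $Q(\Lambda_i)$, so $\frakc = R$ and $\Lambda_1 \simeq \Lambda_2$ as oriented---equivalently, ordinary---isometric lattices, using $\Cl_{\SO}\Lambda \leftrightarrow \Cl_{\OO}\Lambda$.

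The main obstacle is pinning down that the fractional ideal $\frakc$ appearing in the injectivity step is actually trivial: one needs an invariant of the isometry class of a \emph{lattice} (not just of the quadratic space) that scales nontrivially under twisting by $\fraka$, so that equality of the lattices forces $\frakc = R$. The natural choice is the "norm" or scale ideal $\mathfrak{n}(\Lambda) \colonequals \langle Q(x) : x \in \Lambda\rangle$, which satisfies $\mathfrak{n}(\fraka\Lambda) = \fraka^2\mathfrak{n}(\Lambda)$; since $\Lambda_1, \Lambda_2$ are integral with the same scale locally everywhere, $\frakc^2 = R$, and combined with $v_\frakp(\frakc) = 0$ from the local isometries (or from the class-group action being free enough), we get $\frakc = R$. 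Making this last comparison airtight---ensuring no subtlety from $2$-torsion in $\Cl R$ or from the interaction of the global scalar rescaling with the local data---is the one place where care is required; everything else is a formal consequence of \Cref{lem:SOV} and $\OO \simeq \{\pm1\}\times\SO$.
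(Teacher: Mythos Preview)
Your approach is correct and essentially the same as the paper's: both use \Cref{lem:SOV} locally to assemble the twisting ideal $\fraka$ for surjectivity, and your injectivity argument is simply a more detailed version of the paper's terse one-line claim. Your closing worry, however, is a non-issue: once the norm-ideal comparison gives $\frakc^2 = R$ as an equality of \emph{fractional ideals} (not ideal classes), you are already done, since the group of nonzero fractional ideals of a Dedekind domain is free abelian on the primes and hence torsion-free---equivalently, $2v_\frakp(\frakc)=0$ in $\Z$ forces $v_\frakp(\frakc)=0$ for every $\frakp$. There is no role for $2$-torsion in $\Cl R$ here, so no additional care is required.
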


\begin{proof}
The two maps are injective, since isometries are in particular similarities and twisted similarities.  The second map is also surjective, as follows.  Suppose $\Lambda'$ is locally twisted similar to $\Lambda$; we find a twist of $\Lambda'$ which is locally isometric to $\Lambda$.  Since the localizations are DVRs we conclude that $\Lambda'$ is locally similar to $\Lambda$.  Considering a local similarity over $F$, using \eqref{eqn:GOSO} we see that there exists local isometries between $\Lambda_{(\frakp)}$ and $a_{(\frakp)} \Lambda'$ for each prime $\frakp$.  Since $\Lambda_{(\frakp)}=\Lambda'_{(\frakp)}$ for all but finitely many $\frakp$, there is a unique fractional ideal $\fraka \subseteq F$ such that $\fraka_{(\frakp)}=a_{(\frakp)} R_{(\frakp)}$ for all $\frakp$, and hence $\Lambda$ is locally isometric to $\fraka\Lambda'$, as desired.
\end{proof}

We now turn to the main goal of this section: Clifford algebras.  The even Clifford algebra \cite[\S 5.3]{Voight:quatbook} $B \colonequals \Clf^0(Q)$ is a quaternion algebra over $F$ (defined by a universal property \cite[Exercise 5.20]{Voight:quatbook}); and we recover $Q=\nrd|_{B^0}$ up to similarity as the reduced norm on the trace zero subspace.  The construction of the even algebra extends naturally to $R$-lattices in $V$.  

\begin{thm} \label{thm:functorial}
The association $\Lambda \mapsto \Clf^0(\Lambda)$ of the even Clifford algebra to a lattice defines a functor from the category of
\begin{center}
\emph{$R$-lattices $\Lambda \subseteq V$, under similarities}
\end{center}
to the category of
\begin{center}
\emph{Gorenstein quaternion $R$-orders $\calO \subseteq B$, under isomorphisms}
\end{center}
Moreover, the association is functorial with respect to ring homomorphisms $R \to R'$.
\end{thm}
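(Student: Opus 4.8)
The plan is to derive functoriality almost entirely from the universal property of the even Clifford algebra of a quadratic module over a ring, and then to carry out the two pieces of genuine work: checking that the output is a Gorenstein quaternion order, and dealing with the part of a similarity that is not an isometry.

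First I would recall $\calO = \Clf^0(\Lambda)$ as the even Clifford algebra of the ternary quadratic $R$-module $(\Lambda, Q|_\Lambda)$ defined by its universal property (\cite[\S 5.3 and Exercise~5.20]{Voight:quatbook}, with the module version in \cite[Ch.~22]{Voight:quatbook}), sitting inside $B = \Clf^0(Q)$. By the universal property, any morphism of ternary quadratic $R$-modules $(\Lambda, Q|_\Lambda) \to (\Lambda', Q|_{\Lambda'})$ --- in particular an isometry --- induces a homomorphism of $R$-algebras $\Clf^0(\Lambda) \to \Clf^0(\Lambda')$, with identities and composites going to identities and composites; this yields functoriality on the subcategory of isometries for free, and the same formalism yields the compatibility with base change, since $\Clf^0(\Lambda \otimes_R R') \cong \Clf^0(\Lambda)\otimes_R R'$ is immediate from the universal property (or from a presentation). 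So the only content beyond the classical isometry statement is (i) that the output is a Gorenstein quaternion $R$-order and (ii) that similarities, not merely isometries, act.

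For (i), I would localize at a prime $\frakp$ and choose a pseudobasis, so that $\Clf^0(\Lambda)_\frakp$ acquires the presentation $R \oplus Ri \oplus Rj \oplus Rk$ with multiplication laws $i^2 = ui - bc$, $jk = a(u-i)$, and the cyclic analogues as in \cref{sec:classical}; one reads off a rank-$4$ $R$-algebra with a standard involution, hence a quaternion order in $B$, and a short computation of its codifferent --- or a reference to the structure theory of ternary even Clifford algebras in \cite[\S\S 22.3--22.4]{Voight:quatbook} --- shows it is Gorenstein. (One also recovers the sharper fact that the reduced discriminant is governed by $\disc(\Lambda)$, though only the Gorenstein property is needed here.)

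For (ii), I would use \Cref{lem:SOV} to factor a similarity $\phi\colon\Lambda\to\Lambda'$ as $\phi = c\,\sigma$ with $\sigma\in\SO(V)$ and $c$ a scaling, and handle the two cases separately. An oriented isometry $\sigma$ is in particular an isometry of quadratic modules, hence covered by the universal property (concretely, under $\SO(V)\simeq B^\times/F^\times$ it acts on $B$ by the inner automorphism of a representing $\beta\in B^\times$, carrying $\Clf^0(\Lambda)$ isomorphically onto $\Clf^0(\sigma\Lambda)$). The scaling $x\mapsto cx$ is the delicate step, and I expect it to be the main obstacle. It is an isometry $(V,c^2Q)\xrightarrow{\sim}(V,Q)$, so one wants the fact --- special to odd rank --- that the even Clifford algebra is insensitive to rescaling the form, giving a canonical isomorphism $\Clf^0(Q)\cong\Clf^0(c^2Q)$ and hence $\Clf^0(\Lambda)\xrightarrow{\sim}\Clf^0(c\Lambda)$. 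This is exactly where the naive ``$R$-span of products of pairs of vectors of $\Lambda$'' fails: that definition gives a proper suborder $\Clf^0(c\Lambda)=R + c^2\Pi\subsetneq\Clf^0(\Lambda)$, and one must instead use the universal-property even Clifford algebra, whose built-in content normalization makes $\Clf^0(\fraka\Lambda)$ and $\Clf^0(\Lambda)$ coincide. Establishing this --- reducing to a local check over a DVR, in the spirit of the proof of \Cref{lem:OOGO} --- is the heart of the matter; once it is in hand, one finishes by checking that the composite of the two pieces is independent of the factorization $\phi=c\sigma$ and respects composition of similarities (routine, since scalings are central), and the compatibility with $R\to R'$ follows formally from the universal property.
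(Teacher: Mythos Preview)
Your proposal is correct and follows essentially the same route as the paper, which simply cites \cite[Theorems 22.2.11, 22.3.1, 24.2.10]{Voight:quatbook} and then makes one remark: the quadratic modules here are \emph{primitive} because the codomain $L$ is taken to be the $R$-submodule generated by the values of $Q$, and this is exactly what forces the even Clifford order to be Gorenstein. That primitivity convention is precisely the ``content normalization'' you single out as the heart of the scaling step, so your diagnosis of where the naive $R$-span-of-products definition fails and why the quadratic-module definition succeeds is on the mark; your factorization of a similarity via \Cref{lem:SOV} and the handling of the scalar piece by this normalization are the same ingredients the cited references use.
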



\begin{proof}
See Voight \cite[Theorems 22.2.11 and 22.3.1]{Voight:quatbook}.  Restricting $R$ to a Dedekind domain implies that every order is projective over $R$, and we obtain only Gorenstein orders \cite[Theorem 24.2.10]{Voight:quatbook} because our quadratic forms are necessarily primitive: we take the codomain of the quadratic module to be the $R$-submodule generated by the values of the quadratic form.  
\end{proof}

\begin{cor} \label{thm:evencliffbij}
The even Clifford map gives a bijection between the set of twisted similarity classes of $R$-lattices $\Lambda \subseteq V$ and the set of isomorphism classes of Gorenstein $R$-orders $\calO \subseteq B$.  Moreover, it induces a natural bijections 
\[ \Cl_{\GO}(\Lambda)^{\Cl R} \leftrightarrow \Typ \calO \] 
to the type set of $\calO=\Clf^0(\Lambda)$.  
\end{cor}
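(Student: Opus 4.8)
The plan is to assemble this corollary from the functorial statement (Theorem \ref{thm:functorial}) together with the two bookkeeping lemmas, Lemma \ref{lem:SOV} and Lemma \ref{lem:OOGO}, without introducing any new input. The functor of Theorem \ref{thm:functorial} is defined on the category of $R$-lattices under similarities, so it immediately descends to a map on isomorphism classes. The first task is to upgrade it to a map on \emph{twisted} similarity classes: I would check that for a fractional ideal $\fraka$ the even Clifford algebra $\Clf^0(\fraka\Lambda)$ is isomorphic (as an $R$-order in $B$) to $\Clf^0(\Lambda)$. This follows because twisting $\Lambda$ by $\fraka$ rescales the quadratic form by $\fraka^{\otimes 2}$, and the even Clifford construction is insensitive to rescaling the quadratic form by a square of an invertible module — concretely, one can exhibit the isomorphism locally at each prime, where $\fraka$ is principal, generated by some $a_\frakp$, and multiplication by $a_\frakp$ on $V$ is a similarity with square similitude factor, so it induces (via the functor and the $\SO$-part of Lemma \ref{lem:SOV}) an isometry of the rescaled lattice with $\Lambda$ itself; these local isomorphisms are compatible because they agree with a fixed global isomorphism of $B$-algebras away from finitely many primes. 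Hence $\Clf^0$ factors through $(\Cl_{\GO}\Lambda)^{\Cl R}$.

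Next I would establish bijectivity. Surjectivity onto isomorphism classes of Gorenstein $R$-orders in $B$ is exactly the essential-surjectivity half of the correspondence underlying Theorem \ref{thm:functorial} (Voight \cite[Theorems 22.3.1]{Voight:quatbook}): every Gorenstein quaternion order arises as the even Clifford algebra of some ternary lattice. For injectivity, suppose $\Clf^0(\Lambda)\simeq\Clf^0(\Lambda')$ as $R$-orders in $B$. The inverse construction of Theorem \ref{thm:functorial} — taking the trace-zero part with its reduced norm form — recovers the ternary quadratic module up to similarity and up to the ambiguity of a square twist (since $Q=\nrd|_{B^0}$ is only recovered up to a global scalar, and locally that scalar can be absorbed into the lattice only up to its square class, which is where the twist enters). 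Tracking this ambiguity and invoking Lemma \ref{lem:OOGO} to re-express twisted similarity classes identifies $\Lambda$ and $\Lambda'$ in $(\Cl_{\GO}\Lambda)^{\Cl R}$. This gives the stated bijection between twisted similarity classes of lattices and isomorphism classes of Gorenstein orders.

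Finally, to get the second displayed bijection $\Cl_{\GO}(\Lambda)^{\Cl R}\leftrightarrow\Typ\calO$ for the fixed $\calO=\Clf^0(\Lambda)$, I would restrict the global bijection to a single genus: on the quaternion side, the type set $\Typ\calO$ is by definition the set of isomorphism classes of orders \emph{locally isomorphic} to $\calO$, and on the lattice side, the functoriality with respect to ring homomorphisms $R\to R'$ (in particular $R\to R_\frakp$) in Theorem \ref{thm:functorial} shows that $\Lambda'$ lies in the twisted similarity genus of $\Lambda$ — i.e.\ $\Cl_{\GO}(\Lambda)^{\Cl R}$ — if and only if $\Clf^0(\Lambda')$ is locally isomorphic to $\calO$. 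Hence the global bijection identifies the genus-restricted sets, which is the claim. I expect the main obstacle to be making the twist-invariance of $\Clf^0$ and the corresponding ``square-class ambiguity'' in the inverse direction fully precise — i.e.\ verifying that the only indeterminacy in recovering $(V,Q,\Lambda)$ from $\calO$ is exactly a twist by a fractional ideal, no more and no less — since this is what makes Lemma \ref{lem:OOGO} the right bookkeeping device; once that is pinned down, everything else is a formal consequence of Theorem \ref{thm:functorial} and localization.
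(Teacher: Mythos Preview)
The paper's own proof is a direct citation to \cite[Main Theorem 22.5.7 and Corollary 22.5.12]{Voight:quatbook}; your proposal amounts to sketching the argument behind those citations using the ingredients already assembled in this section, and that outline is sound.  Your well-definedness step (twist-invariance of $\Clf^0$) and your genus-restriction step are both correct.  For the former you do not even need to localize: the even Clifford algebra is built from tensors $\Lambda \otimes \Lambda \otimes L^\vee$ (with $L$ the ideal of values), and replacing $(\Lambda,L)$ by $(\fraka\Lambda,\fraka^2 L)$ leaves this module canonically unchanged, so $\Clf^0(\fraka\Lambda)=\Clf^0(\Lambda)$ as orders in $B$.

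The genuine imprecision---which you rightly flag as the main obstacle---is in the injectivity step.  The twist ambiguity does \emph{not} arise because ``$Q=\nrd|_{B^0}$ is only recovered up to a global scalar'': once $B$ is fixed, that form is canonical, and your parenthetical conflates the similitude-factor-is-a-square fact of \Cref{lem:SOV} with something else entirely.  What actually happens is that the explicit inverse (the canonical exterior form $\psi_\calO\colon \tbigwedge^2(\calO/R)\to \tbigwedge^4\calO$, or equivalently the scaled reduced norm on $(\calO^\sharp)^0$) recovers not $\Lambda$ but canonically $\Lambda\otimes P$, where $P=\tbigwedge^3\Lambda\otimes(L^\vee)^{\otimes 2}$ is an invertible $R$-module recording a Steinitz class.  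This is precisely a twist in the sense defined before \Cref{lem:OOGO}; over a DVR $P$ is free and the inverse recovers $\Lambda$ on the nose, which is why the obstruction is invisible locally.  The computation is carried out in the appendix (see \eqref{eqn:domain1}--\eqref{eqn:tbigwedgealign} in the proof of \Cref{mthm:bigthmcliffisom}); once you track $P$ correctly, the rest of your argument goes through.
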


By \Cref{lem:OOGO}, we similarly get a natural bijection 
\begin{equation} \label{eqn:yupClO}
\Cl_{\SO}(\Lambda) \leftrightarrow \Typ \calO.
\end{equation}

\begin{proof}
For the first statement, apply Voight \cite[Main Theorem 22.5.7]{Voight:quatbook} with the same modifications as in the proof of \Cref{thm:functorial}; an explicit inverse is also given (arising from the reduced norm).  The second statement is Voight \cite[Corollary 22.5.12]{Voight:quatbook}.
\end{proof}

\begin{remark}
Although it is not needed in what follows, the bijections in \Cref{thm:evencliffbij} also follow naturally from an equivalence of categories, something which holds over a more general base ring: this is developed in Voight \cite[Theorem B]{Voight:char}.  Algorithmically it is most convenient to work with \eqref{eqn:yupClO} as isometries are easier to compute, so we explain this associated equivalence of categories in \Cref{thm:invassocfunc}.
\end{remark}

We now apply this to orthogonal modular forms.  Recall \cite[Proposition 4.5.10]{Voight:quatbook} that there is an exact sequence
\begin{equation} \label{eqn:blang}
1 \to F^\times \to B^\times \to \SO(\nrd|_{B^0}) \to 1 
\end{equation}
giving an isomorphism $B^\times/F^\times \simeq \SO(Q)(F)$ (and $Q \simeq \nrd|_{B^0}$).  This extends integrally: for a lattice $\Lambda \subseteq V$, computing stabilizers gives 
\begin{equation} 
\SO(\Lambda)(R) \simeq N_{B^\times}(\calO)/F^\times 
\end{equation}
with $\calO \colonequals \Clf^0(\Lambda)$.

Let $\rho \colon \SO(V) \to \GL(W)$ be a finite-dimensional representation.  Via \eqref{eqn:blang} we can lift to get a representation $\rho \colon B^\times \to \GL(W)$ whose restriction to $F^\times$ is trivial, i.e., has trivial central character.  


Recalling the Atkin--Lehner group $\AL(\calOhat)$ in \eqref{eqn:ALOHat}, we have the following result.

\begin{thm} \label{thm:Heckeequiv}
The even Clifford map induces a Hecke-equivariant bijection
\[ \sM_W(\SO(\widehat{\Lambda})) \xrightarrow{\sim}  \sM_W(\widehat{\calO}^\times)^{\AL(\calOhat)} \]
that restricts to a Hecke-equivariant bijection cuspidal subspaces
\[ S_W(\SO(\widehat{\Lambda})) \xrightarrow{\sim} S_W(\widehat{\calO}^\times)^{\AL(\calOhat)}. \]
\end{thm}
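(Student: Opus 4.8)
The plan is to work through the general adelic double coset picture from \Cref{sec:amfs} and match the two sides term-by-term using the bijection \eqref{eqn:yupClO}. First I would observe that by \Cref{lem:SOV} and its integral restriction, $\GO(\widehat\Lambda) \simeq \SO(\widehat\Lambda) \times \widehat{R}^\times$, and by \eqref{eqn:blang} applied adelically, $\SO(\widehat{V}) \simeq \Bhat^\times/\Fhat^\times$ with $\SO(\widehat\Lambda) \simeq N_{\Bhat^\times}(\calOhat)/\Fhat^\times$. The key structural point is that passing to $\SO$ (rather than $\GO$, or $\OO$) corresponds exactly to passing to $\Bhat^\times$ modulo centre and to the \emph{normalizer} $N_{\Bhat^\times}(\calOhat)$ rather than $\calOhat^\times$; the quotient $N_{\Bhat^\times}(\calOhat)/(\Fhat^\times\calOhat^\times)$ is precisely $\AL(\calOhat)$ from \eqref{eqn:ALOHat}. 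So the level group for orthogonal forms, after this identification, differs from $\calOhat^\times$ by exactly the Atkin--Lehner group acting by conjugation, which is why the $\AL(\calOhat)$-invariants appear on the right.

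Next I would set up the comparison of double coset spaces. On the orthogonal side, $\SO(V)\backslash \SO(\widehat V)/\SO(\widehat\Lambda)$ is by definition $\Cl_{\SO}(\Lambda)$ (using the strong approximation-free adelic description of the class set, cf.\ \cite[Lemma 27.6.8]{Voight:quatbook} on the quaternion side); on the quaternion side, $\AL(\calOhat)$ acts on $\Cls\calO = B^\times\backslash\Bhat^\times/\calOhat^\times$ with quotient the type set $\Typ\calO$ (this is the fibration \eqref{eqn:ClsTyp} in the general residually unramified setting). Then \eqref{eqn:yupClO} gives a bijection $\Cl_{\SO}(\Lambda)\leftrightarrow\Typ\calO$, and I would check that under \eqref{eqn:decompbetahati} the stabilizer groups $\Gamma_i$ match: the stabilizer of a lattice class in $\SO(V)$ is $N_{B^\times}(\calO_i)/F^\times$, which is the image in $\Typ\calO$ of the Atkin--Lehner-orbit stabilizer of the corresponding order. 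Since the weight representation $W$ is inflated from $\SO(V)$ to $B^\times$ with trivial central character via \eqref{eqn:blang}, the spaces $H^0(\Gamma_i,W)$ agree on both sides once one restricts the quaternionic space to its $\AL(\calOhat)$-invariants (the Atkin--Lehner action being exactly the difference between $\calOhat^\times$-level and normalizer-level). This yields the isomorphism of $\C$-vector spaces $\sM_W(\SO(\widehat\Lambda)) \xrightarrow{\sim} \sM_W(\widehat{\calO}^\times)^{\AL(\calOhat)}$.

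For Hecke-equivariance I would argue that the Hecke operators $T_{\pihat}$ on both sides are defined by the same recipe \eqref{eqn:decomp}--\eqref{eqn:decompbetahati} in terms of the coset decomposition $\Khat\pihat\Khat = \bigsqcup_j \pihat_j\Khat$, and that for $\pihat$ supported away from $\frakN$ the $p$-neighbour cosets on the $\SO$ side correspond under \eqref{eqn:blang} and the even Clifford functor to the quaternionic Hecke cosets (this is the functoriality with respect to ring homomorphisms $R\to R'$ in \Cref{thm:functorial}, applied at each completion, together with the fact that away from $\frakN$ the normalizer contributes nothing and $\SO(\widehat V)\simeq\Bhat^\times/\Fhat^\times$ intertwines the two double-coset actions). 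Finally, for the cuspidal statement, I would recall that the Eisenstein subspace on the quaternionic side is the forms factoring through $\nrd$, and on the orthogonal side the forms factoring through $\SO(\widehat V)^{\mathrm{ab}}$; the isomorphism $B^\times/F^\times\simeq\SO(V)$ carries the reduced norm to the spinor norm on $\SO$, so the Eisenstein subspaces correspond, and one checks the $\AL(\calOhat)$-action preserves each (as already noted above \eqref{eqn:decomposecharbig}). Restricting to orthogonal complements then gives the bijection on cuspidal subspaces.

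The main obstacle I anticipate is bookkeeping at the bad primes $\frakp\mid\frakN$: one must verify carefully that $N_{B_\frakp^\times}(\calO_\frakp)/(F_\frakp^\times\calO_\frakp^\times)$ really is the full local Atkin--Lehner group of order $2$ in both the residually split and residually inert cases (this is exactly \Cref{prop:alltheAL}(a), which I would invoke), and that the action of this group on the local class set by conjugation matches the two-sided-ideal description of the fibres of $\Cls\calO\to\Typ\calO$. The residually inert case requires the explicit embedding \eqref{eqn:Opembed} and the observation that $j$ normalizes $\calO_\frakp$; once that local input is in hand, the global statement is a formal consequence of \Cref{thm:evencliffbij} and the double-coset formalism.
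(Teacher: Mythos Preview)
Your proposal is correct and follows essentially the same approach as the paper: invoke the bijection $\Cl\Lambda \leftrightarrow \Typ\calO$ from \Cref{thm:evencliffbij} together with functoriality (\Cref{thm:functorial}) for Hecke-equivariance, then observe that the Eisenstein subspaces and inner product are preserved so the map restricts to cuspidal subspaces. The paper's proof is far terser than yours; note that your final paragraph's concern about computing the local Atkin--Lehner groups at bad primes is superfluous for this theorem, since $\AL(\calOhat)$ enters only as the abstract normalizer quotient and its explicit structure (\Cref{prop:alltheAL}) is not needed until \Cref{cor:ALinterp}.
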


\begin{proof}
From \Cref{thm:evencliffbij}, we have a bijection $\Cl \Lambda \leftrightarrow \Typ \calO$; from \Cref{thm:functorial} (restricting to oriented isometries), it is functorial with respect to the Hecke operators.  The constant functions and inner product are preserved by this map, so the result restricts to their orthogonal complements, the cuspidal subspaces.
\end{proof}

\Cref{thm:Heckeequiv} applies to a general ternary lattice with arbitrary weight, yielding a general (Gorenstein) quaternion order.  Looking towards our application, where we restrict our classes of orders to those which are residually unramified, we characterize the corresponding lattices.  

\begin{lem} \label{lem:equivconds}
Let $\Lambda$ be a ternary $R$-lattice.  Then the following are equivalent.
\begin{enumroman}
\item The discriminant module $\Lambda^\sharp/\Lambda$ is a cyclic $R$-module;
\item For all (nonzero) primes $\frakp \subseteq R$ we have 
\begin{equation} \label{eqn:psplitting}
\Lambda_\frakp \simeq U_\frakp \boxplus R_\frakp z
\end{equation}
where $U_\frakp$ is nonsingular (i.e., $\disc(U_\frakp) \in R_\frakp^\times/R_\frakp^{\times 2}$); and
\item $\Clf^0(\Lambda)$ is residually unramified at all primes $\frakp$.  
\end{enumroman}
\end{lem}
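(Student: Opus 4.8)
The plan is to prove the equivalence of (i), (ii), (iii) by reducing everything to a local statement at each prime $\frakp$ and then invoking the structure theory of ternary lattices over a DVR together with the explicit description of the even Clifford order. Since $\Lambda^\sharp/\Lambda$ is a finitely generated torsion $R$-module, it is cyclic if and only if each localization $(\Lambda^\sharp/\Lambda)_\frakp \cong \Lambda_\frakp^\sharp/\Lambda_\frakp$ is cyclic over $R_\frakp$, and likewise the conditions in (ii) and (iii) are local by construction. So it suffices to fix a prime $\frakp$ and prove the equivalence of the three localized conditions over the DVR $R_\frakp$.

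First I would treat (i)$\Leftrightarrow$(ii). Over a DVR, every ternary lattice $\Lambda_\frakp$ admits an orthogonal splitting into modular (Jordan) components $\Lambda_\frakp \simeq \Lambda^{(0)} \boxplus \Lambda^{(1)} \boxplus \cdots$ scaled by increasing powers of the uniformizer $\pi$ (when $\frakp$ is odd this is classical diagonalization; when $\frakp \mid 2$ one uses the Jordan decomposition allowing binary modular blocks). The discriminant module $\Lambda_\frakp^\sharp/\Lambda_\frakp$ has length equal to $v_\frakp(\disc \Lambda_\frakp)$ and its module structure is dictated by the scales of the Jordan blocks. A direct computation shows $\Lambda_\frakp^\sharp/\Lambda_\frakp$ is cyclic precisely when at most one Jordan block is non-unimodular and that block has rank one — equivalently, $\Lambda_\frakp$ decomposes as a unimodular (rank $\leq 3$) piece orthogonally summed with $R_\frakp z$ where $\langle z \rangle$ carries all the non-unit discriminant; absorbing the unit-discriminant rank-two part into $U_\frakp$ gives exactly the splitting \eqref{eqn:psplitting}. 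Conversely \eqref{eqn:psplitting} makes $\Lambda_\frakp^\sharp/\Lambda_\frakp \cong R_\frakp z^\sharp/R_\frakp z$ manifestly cyclic.

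Next I would treat (ii)$\Leftrightarrow$(iii) using the even Clifford functor. Starting from the splitting $\Lambda_\frakp \simeq U_\frakp \boxplus R_\frakp z$ with $U_\frakp$ binary nonsingular, I would compute $\calO_\frakp = \Clf^0(\Lambda_\frakp)$ explicitly via the multiplication laws of the even Clifford algebra (as recalled in the classical case, \eqref{eqn:Q} and the ensuing formulas, adapted to the chosen basis). The even Clifford algebra of a binary nonsingular form is a quadratic étale $R_\frakp$-algebra $S_\frakp$, and adjoining the third generator corresponding to $z$ with its single non-unit parameter $b = \pi^e$ realizes $\calO_\frakp$ as precisely the order $\left(\frac{S_\frakp, b}{R_\frakp}\right)$ appearing in \eqref{eqn:Opembed} — residually split if $S_\frakp \cong R_\frakp \times R_\frakp$ and residually inert if $S_\frakp$ is the unramified quadratic valuation ring; in both cases residually unramified. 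This shows (ii)$\Rightarrow$(iii). For (iii)$\Rightarrow$(ii) I would argue contrapositively: if \eqref{eqn:psplitting} fails then either the non-unimodular part of $\Lambda_\frakp$ has rank $\geq 2$ or there are $\geq 2$ distinct non-unit Jordan scales, and in either case the even Clifford order has a Jacobson radical whose quotient is neither $\F_\frakp \times \F_\frakp$ nor $\F_{\frakp^2}$ (it is $\F_\frakp$, i.e. local but with residually ramified behaviour), contradicting residual unramifiedness. Alternatively, and perhaps more cleanly, one can cite the discriminant-preserving functoriality of \Cref{thm:functorial} together with the known classification \cite[24.3]{Voight:quatbook} of residually unramified orders as exactly those whose "odd part" at $\frakp$ has the form displayed in \eqref{eqn:Opembed}, and then match that normal form back against the lattice side.

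The main obstacle is the case $\frakp \mid 2$ in the implications involving \eqref{eqn:psplitting}: diagonalization can fail, binary modular Jordan blocks must be carried along, and one must check that a binary modular block of scale $> 0$ really does obstruct cyclicity of the discriminant module and correspondingly forces the Clifford order out of the residually unramified class. I would handle this by working directly with the Jordan decomposition over $R_\frakp$ for $\frakp \mid 2$ and verifying that any such block contributes a non-cyclic summand $R_\frakp/\frakp \oplus R_\frakp/\frakp$ (or worse) to $\Lambda_\frakp^\sharp/\Lambda_\frakp$; the Clifford-side consequence then follows from the explicit multiplication table. Everything else is a routine local computation, and the global statement is immediate from localization.
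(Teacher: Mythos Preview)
Your proposal is correct and follows essentially the same route as the paper, only with substantially more detail spelled out. The paper's own proof is extremely terse: for (i)$\Leftrightarrow$(ii) it notes that (ii) is equivalent to the reduction of $Q$ modulo $\frakp$ having rank at least $2$ (with a nonsingularity caveat at even primes), which is your Jordan-block argument in compressed form; for (ii)$\Leftrightarrow$(iii) it simply cites \cite[24.3.9]{Voight:quatbook}, which is exactly your ``alternative, cleaner'' route at the end. Your explicit Clifford computation for (ii)$\Rightarrow$(iii) and the contrapositive for (iii)$\Rightarrow$(ii) are a faithful unpacking of that reference rather than a genuinely different method.
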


\begin{proof}
The equivalences of (i) and (ii) are almost immediate: for odd primes $\frakp$, the condition in (ii) is equivalent to the rank of the reduction of $Q$ modulo $\frakp$ (defined on $\Lambda/\frakp\Lambda$) being at least $2$, but when $\frakp$ is even we are asking further that the reduction is nonsingular.  The equivalence (ii) $\Leftrightarrow$ (iii) is given in Voight \cite[24.3.9]{Voight:quatbook}.
\end{proof}

\section{The radical character} \label{sec:radchar}


We keep notations as in previous section: in particular, let $\Lambda \subset V$ be a definite ternary $R$-lattice of discriminant $\frakN$, and let $\calO = \Clf^0(\Lambda)$ be its even Clifford algebra, an  $R$-order in $B=\Clf^0(V)$.  

Let $\frakp\mid\frakN$.  We define the \defi{radical} of $\Lambda_\frakp$ as
\begin{equation}
\Rad(\Lambda_\frakp) \colonequals \{x\in\Lambda_\frakp : T(x,y) \equiv 0\psmod{2\frakN}\text{ for all $y \in \Lambda_\frakp$}\}.
\end{equation}
The name is justified, as this recovers the usual radical of a quadratic space when the discriminant is $0$.  


We see that $\Rad_\frakp(\Lambda)$ is a $R_\frakp$-lattice with
$\Rad(\Lambda_\frakp)\supseteq 2\frakN\Lambda_\frakp$.  

\begin{prop} \label{prop:cycooRad}
If $\Lambda_\frakp$ has cyclic discriminant, then $\Rad(\Lambda_\frakp)/(2\frakN\Lambda_\frakp)$ is free of rank 1 over $R_\frakp/2\frakN R_\frakp$.
\end{prop}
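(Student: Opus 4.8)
The plan is to work locally at the fixed prime $\frakp$ and use the splitting from \Cref{lem:equivconds}(ii), reducing everything to an explicit $2\times 2$-block computation. First I would write $e \colonequals v_\frakp(\frakN)$, fix a uniformizer $\pi$ of $R_\frakp$, and use \eqref{eqn:psplitting} to write $\Lambda_\frakp \simeq U_\frakp \boxplus R_\frakp z$ with $U_\frakp$ nonsingular of rank $2$. On $U_\frakp$ the bilinear form $T$ is unimodular (up to the usual factor-of-$2$ conventions), so for $x = u + cz$ with $u \in U_\frakp$, $c \in R_\frakp$, the condition $T(x,y)\equiv 0 \psmod{2\frakN}$ for all $y \in U_\frakp$ forces $u \in 2\frakN U_\frakp = \pi^e \cdot (2) \cdot U_\frakp$ (since $T|_{U_\frakp}$ is a perfect pairing), while testing against $y = z$ gives the single further congruence $T(z,z)\,c \equiv 0 \psmod{2\frakN}$, i.e.\ $Q(z)$-part of the constraint on $c$. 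Since $\disc(\Lambda_\frakp) = \disc(U_\frakp)\cdot T(z,z)$ up to a unit and $\disc(U_\frakp)$ is a unit, we get $v_\frakp(T(z,z)) = v_\frakp(\frakN) = e$ (at odd $\frakp$; at even $\frakp$ one instead reads off $T(z,z)$ from the nonsingularity hypothesis on the whole lattice, which forces $z$ to carry the entire defect). So the constraint on $c$ becomes $c \equiv 0$ modulo $2\frakN/\gcd(2\frakN, T(z,z)) = 2 R_\frakp$ (the $\pi$-part cancels), leaving exactly the coset $2\Lambda$-worth... let me instead phrase the conclusion cleanly below.

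More precisely, I would identify $\Rad(\Lambda_\frakp)$ as $(2\frakN U_\frakp) \boxplus \frakc z$ where $\frakc \subseteq R_\frakp$ is the ideal $\{c : T(z,z)c \in 2\frakN R_\frakp\}$; since $v_\frakp(T(z,z))=e=v_\frakp(\frakN)$, we have $\frakc = 2 R_\frakp$. Then
\begin{equation}
\Rad(\Lambda_\frakp)/(2\frakN\Lambda_\frakp) \;\simeq\; \bigl(2\frakN U_\frakp/2\frakN U_\frakp\bigr) \oplus \bigl(2R_\frakp z / 2\frakN R_\frakp z\bigr) \;\simeq\; 2R_\frakp/2\frakN R_\frakp,
\end{equation}
which is visibly free of rank $1$ over $R_\frakp/2\frakN R_\frakp$ (multiplication by $2$ is an isomorphism $R_\frakp/\frakN R_\frakp \to 2R_\frakp/2\frakN R_\frakp$, but in any case it is generated by the single element $2z$). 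The $U_\frakp$-part contributes nothing because $2\frakN U_\frakp$ is exactly the set of $u$ with $T(u,U_\frakp)\subseteq 2\frakN R_\frakp$, by unimodularity of $U_\frakp$.

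The main obstacle is the prime $\frakp = 2$ (equivalently, primes dividing $2$), where the naive "diagonalize and read off valuations" argument does not literally apply: the bilinear form $T$ is $2Q$ on the diagonal, so the distinction between $Q$ and $T$ matters, and nonsingularity of $U_\frakp$ has to be interpreted in the stronger (non-degenerate-mod-$\frakp$) sense already flagged in the proof of \Cref{lem:equivconds}. I would handle this by invoking that stronger form of (ii): nonsingularity of $U_\frakp$ means $T|_{U_\frakp}$ is a unimodular symmetric bilinear form over $R_\frakp$, so $T(U_\frakp, U_\frakp) = R_\frakp$ and the "$u \in 2\frakN U_\frakp$" step goes through verbatim even at even primes; and the defect $\Lambda_\frakp^\sharp/\Lambda_\frakp$ being cyclic of order $\frakN_\frakp$ forces $v_\frakp(T(z,z)) = v_\frakp(2\frakN)/... $ — here one must be slightly careful that "discriminant $\frakN$" in the paper's normalization (the $\det/2$ convention from \S\ref{sec:classical}) already absorbs one factor of $2$, so that the relevant comparison is $v_\frakp(T(z,z))$ against $v_\frakp(2\frakN)$, making $\frakc = 2R_\frakp$ come out correctly in all cases. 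Once this bookkeeping is pinned down, freeness of rank $1$ is immediate.
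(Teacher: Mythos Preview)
Your approach is the same as the paper's---use the splitting \eqref{eqn:psplitting} and identify the generator of the quotient---but you have a valuation slip that propagates to a wrong answer at primes above $2$.

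The issue is the claim $v_\frakp(T(z,z))=e$. In fact $T(z,z)=2Q(z)$, and the half-discriminant convention gives $\disc(\Lambda_\frakp)=\det(T|_{U_\frakp})\cdot Q(z)$ up to units, so $v_\frakp(Q(z))=e$ and hence $v_\frakp(T(z,z))=v_\frakp(2)+e=v_\frakp(2\frakN)$. With this corrected valuation, the condition $T(z,z)c\in 2\frakN R_\frakp$ is vacuous: $\frakc=R_\frakp$, not $2R_\frakp$. Thus $\Rad(\Lambda_\frakp)=2\frakN U_\frakp \oplus R_\frakp z$ and the quotient is $R_\frakp z/2\frakN R_\frakp z\simeq R_\frakp/2\frakN R_\frakp$, generated by $z$ itself (exactly as the paper asserts). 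Your version $2R_\frakp z/2\frakN R_\frakp z\simeq R_\frakp/\frakN R_\frakp$ has the wrong cardinality at primes $\frakp\mid 2$ and is \emph{not} free of rank $1$ over $R_\frakp/2\frakN R_\frakp$ there, so the stated conclusion would fail. (You nearly catch this in your last paragraph---you correctly note the comparison should be $v_\frakp(T(z,z))$ against $v_\frakp(2\frakN)$---but then draw the wrong conclusion: equality of those valuations gives $\frakc=R_\frakp$, not $\frakc=2R_\frakp$.) Once this is fixed, your argument is complete and matches the paper's.
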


\begin{proof}
We apply \Cref{lem:equivconds}, (i) $\Rightarrow$ (ii): it is generated by $z$, since $Q(z) \equiv 0 \pmod{\frakN}$ and hence $T(z,z)=2Q(z) \equiv 0 \pmod{2\frakN}$; see also \cite[Lemma 6.13]{DPRT}.
\end{proof}

Since it is defined by the bilinear form, $\Rad(\Lambda_\frakp)$ is invariant
under $\OO(\Lambda_\frakp)$, hence we obtain a character
\begin{equation} \label{eqn:radicalchar}
    \nu_\frakp \colon \SO(\Lambdahat)\to \SO(\Lambda_\frakp) \to \OO(\Rad_\frakp(\Lambda)/2\frakN \Lambda_\frakp) \simeq (R_\frakp/2\frakN R_\frakp)^\times
\end{equation}
which we call the \defi{radical character} at $\frakp$.

\begin{lem}
The image of the radical character $\nu_\frakp$ is $\{\pm 1\}$, and it is uniquely defined by
\[
  \sigma(z) \equiv \nu_\frakp(\sigma)z \pmod{2\frakp\Lambda}
\]
for $\sigma \in \SO(\Lambda_\frakp)$.
\end{lem}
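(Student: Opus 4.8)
The plan is to read the whole statement off the single relation $Q(\sigma z) = Q(z)$ together with the local structure already in hand. First I would fix notation: by \Cref{lem:equivconds} the cyclicity of the discriminant provides an \emph{orthogonal} splitting $\Lambda_\frakp \simeq U_\frakp \boxplus R_\frakp z$ with $U_\frakp$ nonsingular, and \Cref{prop:cycooRad} gives $\Rad(\Lambda_\frakp)/2\frakN\Lambda_\frakp = (R_\frakp/2\frakN R_\frakp)\,\overline z$; moreover $v_\frakp(Q(z)) = e \colonequals v_\frakp(\frakN)$, since $\disc(U_\frakp) \in R_\frakp^\times$ while $\disc(\Lambda_\frakp) = \frakp^e$. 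As $\nu_\frakp$ is pulled back along the surjection $\SO(\widehat{\Lambda}) \twoheadrightarrow \SO(\Lambda_\frakp)$, it suffices to treat $\sigma \in \SO(\Lambda_\frakp)$ --- in fact any $\sigma \in \OO(\Lambda_\frakp)$: such $\sigma$ preserves $\Rad(\Lambda_\frakp)$ and hence acts on the free rank-one module $\Rad(\Lambda_\frakp)/2\frakN\Lambda_\frakp$ by a unit, i.e.\ $\sigma(z) = cz + w$ with $c \in R_\frakp^\times$ well-defined modulo $2\frakN$ and $w \in 2\frakN\Lambda_\frakp$, and $\nu_\frakp(\sigma)$ is the class of $c$.

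The heart of the matter is a valuation count. Expanding $Q(z) = Q(\sigma z) = Q(cz+w) = c^2 Q(z) + c\,T(z,w) + Q(w)$ and using orthogonality of the splitting --- so that $T(z,\Lambda_\frakp) = T(z,z)R_\frakp = 2Q(z)R_\frakp$ --- one gets $T(z,w) \in 2\frakN \cdot 2Q(z)R_\frakp$ and $Q(w) \in (2\frakN)^2 Q(\Lambda_\frakp) \subseteq (2\frakN)^2 R_\frakp$; hence $(c^2-1)Q(z) \in 4\frakN\,Q(z)\,R_\frakp$, and cancelling the nonzerodivisor $Q(z)$ yields $c^2 \equiv 1 \psmod{4\frakN R_\frakp}$. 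I would then descend this to a congruence modulo $2\frakN$: with $a = v_\frakp(c-1)$ and $b = v_\frakp(c+1)$ one has $a+b \geq v_\frakp(4\frakN) = 2v_\frakp(2)+e$; the case $a = b$ is impossible, since it forces $v_\frakp(2) \geq a$ and then $2a \geq 2v_\frakp(2)+e \geq 2a+e$, contradicting $e \geq 1$; so $a \neq b$, whence $\min(a,b) = v_\frakp((c+1)-(c-1)) = v_\frakp(2)$ and $\max(a,b) \geq v_\frakp(2)+e = v_\frakp(2\frakN)$, so $2\frakN$ divides $c-1$ or $c+1$. Thus $\nu_\frakp(\sigma) \in \{\pm 1\}$. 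The value $1$ occurs at the identity; the value $-1$ occurs for $\sigma = (-\id) \circ (\tau \oplus \id_{R_\frakp z})$, where $\tau$ is a determinant $-1$ isometry of $U_\frakp$ (which exists since $U_\frakp$ is nonsingular): this $\sigma$ has determinant $1$, lies in $\SO(\Lambda_\frakp)$, and satisfies $\sigma(z) = -z$. Hence the image of $\nu_\frakp$ is exactly $\{\pm 1\}$.

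Finally, since $\sigma(z) \equiv cz \psmod{2\frakN\Lambda_\frakp}$ and $c \equiv \nu_\frakp(\sigma) \psmod{2\frakN}$ with $\nu_\frakp(\sigma) \in \{\pm 1\}$, reducing modulo the coarser ideal $2\frakp\Lambda_\frakp \supseteq 2\frakN\Lambda_\frakp$ (coarser because $e \geq 1$) gives the asserted congruence $\sigma(z) \equiv \nu_\frakp(\sigma)z \psmod{2\frakp\Lambda}$; and it pins down $\nu_\frakp(\sigma)$ because $z \not\equiv -z \psmod{2\frakp\Lambda_\frakp}$ --- otherwise $2z \in 2\frakp\Lambda_\frakp$, and dividing by the nonzerodivisor $2$ would put $z \in \frakp\Lambda_\frakp$, impossible as $z$ is part of an $R_\frakp$-basis. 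The step I expect to require the most care is the valuation bookkeeping in the second paragraph, carried out uniformly in $\frakp$ --- in particular at the even primes $\frakp \mid 2$, where $2$ and $4$ are no longer units; this is exactly where the cyclic-discriminant hypothesis is indispensable, since it is what makes the splitting $\Lambda_\frakp = U_\frakp \boxplus R_\frakp z$ orthogonal and thereby forces the clean identity $T(z,\Lambda_\frakp) = 2Q(z)R_\frakp$ on which all the estimates rest.
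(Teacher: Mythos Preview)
Your proof is correct and follows essentially the same line as the paper's: use the orthogonal splitting $\Lambda_\frakp = U_\frakp \boxplus R_\frakp z$, expand $Q(\sigma z)=Q(z)$ to force $c^2\equiv 1\psmod{4\frakN}$, deduce $c\equiv\pm1\psmod{2\frakN}$, and exhibit an explicit $\sigma\in\SO(\Lambda_\frakp)$ with $\sigma(z)=-z$. The only real difference is that the paper first absorbs the $z$-component of $w$ into $c$ (so $w\in 2\frakN U_\frakp$ and the cross term vanishes), whereas you bound $T(z,w)$ directly via $T(z,\Lambda_\frakp)=2Q(z)R_\frakp$; and you spell out the valuation bookkeeping taking $c^2\equiv 1\psmod{4\frakN}$ to $c\equiv\pm1\psmod{2\frakN}$ uniformly in $\frakp\mid 2$, which the paper leaves implicit.
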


\begin{proof}
For the first statement, we have a (noncanonical) splitting $\Lambda_\frakp \simeq U_\frakp \boxplus R_\frakp z$ in \Cref{lem:equivconds}(ii) with $z \in \Rad(\Lambda_\frakp)$.  Let $\sigma \in \SO(\Lambda_\frakp)$.  Then from \eqref{eqn:radicalchar} we have $\sigma(z) = az+y$ with $a \in R_\frakp^\times$ and $y \in 2\frakN \Lambda_\frakp$.  Without loss of generality, we may suppose that $y \in 2\frakN U_\frakp$.  Then $Q(z)=Q(\sigma z) = a^2Q(z) + Q(y) \equiv a^2Q(z) \pmod{4\frakN^2}$ since $y \in 2\frakN U_\frakp$.  Since $\ord_\frakp Q(z) = \ord_\frakp \frakN$, we conclude that $a^2 \equiv 1 \pmod{4\frakN}$, whence $a=\nu_\frakp(\sigma) \equiv \pm 1 \pmod{2\frakN}$.  Composing a reflection in $z$ with reflection in a vector in $U_\frakp$ shows that the image is fully $\{\pm 1\}$, and this can already be seen modulo $2\frakp$.
\end{proof}

We extend the radical character multiplicatively, defining 
\begin{equation}
\nu_\frakM \colonequals \prod_{\frakp \mid \frakM} \nu_\frakp
: \SO(\Lambdahat)\to \{\pm 1\}
\end{equation}
for $\frakM \mid \frakN$ squarefree.

There is another character, the spinor norm character; we show these agree in odd exponent, as follows.  Recall that the \defi{spinor norm} is the map composition 
\begin{equation}
\theta \colon \SO(V) \simeq B^\times/F^\times \xrightarrow{\nrd} F^\times/F^{\times 2}.
\end{equation}
We define the \defi{spinor norm character} for $\frakM \mid \frakN$ by composing
\begin{equation}  \label{eqn:sigmaMM}
\begin{aligned}
\theta_\frakM \colon \SO(V) \xrightarrow{\theta} F^\times/F^{\times 2} &\to \{\pm 1\} \\
a &\mapsto \prod_{\frakp \mid \frakM} (-1)^{\ord_\frakp(a)}.
\end{aligned}
\end{equation}

We let $A \colonequals \langle \nu_{\frakM} \rangle \simeq \prod_{\frakp \mid \frakN} C_2$ be the \defi{radical character group}.  

\begin{prop} \label{prop:A21}
Suppose $\calO$ is residually unramified.  Then the following statements hold, under the isomorphism $\SO(\Lambdahat) \xrightarrow{\sim} N_{\Bhat^\times}(\calOhat)$ induced by the even Clifford map.  
\begin{enumalph}
\item The pairing 
\begin{equation}
\begin{aligned}
A \times N_{\Bhat^\times}(\calOhat) &\to \{\pm 1\} \\
(\nu,\alpha) &\mapsto \nu(\alpha)
\end{aligned}
\end{equation}
is perfect.  
\item If $\ord_\frakp \frakN$ is \emph{odd}, then $\nu_\frakp = \theta_\frakp$.  
\end{enumalph}
\end{prop}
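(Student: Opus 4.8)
\textbf{Proof plan for Proposition \ref{prop:A21}.}

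The plan is to verify both statements locally, prime by prime, since all the objects involved---the radical characters $\nu_\frakp$, the spinor norm characters $\theta_\frakp$, the adelic normalizer $N_{\Bhat^\times}(\calOhat)$, and the perfectness of the pairing---decompose as restricted products over the primes $\frakp \mid \frakN$. For part (a), by \Cref{prop:alltheAL}(a) the local Atkin--Lehner group $\AL(\calO_\frakp) = N_{B_\frakp^\times}(\calO_\frakp)/(F_\frakp^\times \calO_\frakp^\times)$ has order $2$, and the radical character group $A$ is by construction $\prod_{\frakp \mid \frakN} \langle \nu_\frakp \rangle$ with each factor of order $2$ by the preceding lemma. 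So the pairing $A \times N_{\Bhat^\times}(\calOhat) \to \{\pm 1\}$ will be perfect provided that at each $\frakp$ the local pairing $\langle \nu_\frakp \rangle \times \AL(\calO_\frakp) \to \{\pm 1\}$ is nondegenerate, i.e., that $\nu_\frakp$ is nontrivial on the Atkin--Lehner element. First I would identify, via the even Clifford map, the nontrivial element of $\AL(\calO_\frakp)$ with an explicit oriented isometry of $\Lambda_\frakp$ and compute its effect on the radical line $R_\frakp z$ modulo $2\frakp\Lambda$: in the residually split (Eichler) case this is the usual Atkin--Lehner element normalizing the Eichler order, and in the residually inert case it is (the image of) the element $j$ from \eqref{eqn:Opembed}. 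One then checks directly that this isometry acts by $-1$ on $z \pmod{2\frakp\Lambda}$, so $\nu_\frakp$ detects it; equivalently, since $\SO(\Lambda_\frakp)$ surjects onto $\AL(\calO_\frakp)$ and $\nu_\frakp$ is a nontrivial character of $\SO(\Lambda_\frakp)$ that factors through $\AL(\calO_\frakp)$ (being trivial on $\SO(\Lambda_\frakp) \cap \calO_\frakp^\times = \OO(\Lambda_\frakp)^\circ$, the local integral units), it must be the unique nontrivial character of the order-$2$ group $\AL(\calO_\frakp)$.

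For part (b), when $\ord_\frakp \frakN = e$ is odd I would again work purely locally at $\frakp$ and show $\nu_\frakp = \theta_\frakp$ as characters of $\SO(\Lambda_\frakp) \simeq N_{B_\frakp^\times}(\calO_\frakp)/F_\frakp^\times$. Both are characters of an order-$2$ group (the local units $\OO(\Lambda_\frakp)^\circ$ lie in their common kernel: $\calO_\frakp^\times$ has reduced norm in $R_\frakp^\times$, so $\theta_\frakp$ kills it, and we saw $\nu_\frakp$ kills it), so it suffices to check they agree on one element generating $\AL(\calO_\frakp)$. Since $e$ is odd, the even Clifford order $\calO_\frakp$ is a local division-algebra order (this uses the suitability/residual-unramifiedness hypothesis: odd exponent forces $B_\frakp$ division, as recalled before \Cref{prop:unique}), and in the residually inert division case a uniformizer $j$ of $\calO_\frakp$ has $\nrd(j) = \pi$ up to squares (from $\calO_\frakp \simeq \quat{S_\frakp, \pi^e}{R_\frakp}$ with $e$ odd, and a standard uniformizer-computation), so $\ord_\frakp \nrd(j)$ is odd and $\theta_\frakp$ sends $j$ to $-1$; by part (a) applied at this $\frakp$, $\nu_\frakp$ also sends the nontrivial element to $-1$. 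Hence the two nontrivial characters of the order-$2$ group coincide.

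The main obstacle I anticipate is the bookkeeping in the residually inert case: translating between the lattice side (the explicit radical line $R_\frakp z$ inside $\Lambda_\frakp \simeq U_\frakp \boxplus R_\frakp z$, with the isometry action read off modulo $2\frakp\Lambda$) and the quaternion-order side (the embedding \eqref{eqn:Opembed}, the uniformizer $j$, and its reduced norm), in a way that is uniform in $\frakp$ including the dyadic primes---where the radical is controlled modulo $2\frakN$ rather than $\frakN$, and where the ``completing the square'' identification of $\Clf^0(\Lambda_\frakp)$ must be handled with care. I would isolate this as a single local computation, quoting \Cref{prop:cycooRad} for the structure of the radical, \eqref{eqn:Opembed} and \cite[24.3.7, 24.4.9]{Voight:quatbook} for the shape of $\calO_\frakp$, and the spinor-norm formula (the ``$\gamma(\nu) = \tr(\gamma)+1$'' style identity and its integral refinement, cf.\ the discussion around \eqref{eqn:blang}) to pin down $\theta_\frakp(j)$; the rest is then formal, since every relevant group is cyclic of order $2$.
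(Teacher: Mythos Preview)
Your overall strategy matches the paper's: work prime by prime, identify the Atkin--Lehner generator on the quaternion side with an explicit isometry via the Clifford map, and compute its action on the radical line. But there are two genuine problems.

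For part (a), the step you gloss over is that $\nu_\frakp$ is \emph{trivial} on the image of $\calO_\frakp^\times$---equivalently, that it factors through $\AL(\calO_\frakp)$. You assert this parenthetically (``being trivial on \ldots\ the local integral units'') as if it were known, but this is the main computational content of the proof, not a formality. The paper establishes it by writing $\calO_\frakp^\times = S_\frakp^\times \cdot (1+J_\frakp)$ with $S_\frakp = \Clf^0(U_\frakp)$ and $J_\frakp = U_\frakp z = S_\frakp j$, then checking explicitly that $S_\frakp^\times$ commutes with $z$ (from orthogonality of $U_\frakp$ and $z$) and that $(1+\beta_\frakp)^{-1} z (1+\beta_\frakp) \equiv z \pmod{2\frakN\Lambda_\frakp}$ for $\beta_\frakp \in J_\frakp$. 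Only once this is done does the verification $j^{-1} z j = -z$ yield nondegeneracy. Without it, your reduction ``perfect provided the local pairing $\langle\nu_\frakp\rangle \times \AL(\calO_\frakp) \to \{\pm 1\}$ is nondegenerate'' is circular: you have not yet shown there \emph{is} a well-defined pairing with $\AL(\calO_\frakp)$.

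For part (b), your claim ``odd exponent forces $B_\frakp$ division'' is false. That implication holds only in the residually \emph{inert} case; a residually split (Eichler) order in $\M_2(F_\frakp)$ can perfectly well have odd level $\frakp^e$. The uniform argument, which the paper gives, is that in either case the Atkin--Lehner generator $j = w_\frakp$ satisfies $\ord_\frakp \nrd(j) = e$, so $\theta_\frakp(w_\frakp) = (-1)^e = -1$ exactly when $e$ is odd, regardless of whether $B_\frakp$ is split or division. Your conclusion survives, but the argument as written omits the Eichler case.
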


\begin{proof}
First part (a).  We computed $N_{B_\frakp^\times}(\calO_\frakp) = F_\frakp^\times \calO_\frakp^\times \langle w_\frakp \rangle$ in \Cref{prop:alltheAL}(i).  We now make that a bit more explicit.  Writing
\[ \Lambda_\frakp = U_\frakp \boxplus R_\frakp z \]
we compute that
\[ \Clf^0(\Lambda_\frakp) = \Clf^0(U_\frakp) \oplus U_\frakp z. \]
Write $S_\frakp \colonequals \Clf^0(U_\frakp)$.  Then $S_\frakp$ is unramified over $R_\frakp$, either split or inert according as $U_\frakp$ is isotropic or anisotropic and according as $\frakp \mid \frakN_{+}$ or $\frakp \mid \frakN_{-}$.  Abbreviate $J_\frakp \colonequals U_\frakp z$.  Then $J_\frakp = S_\frakp j$ with $j^2=b$ with $\ord_\frakp(b)=\ord_\frakp \frakN=e$, and $j=w_\frakp$.  

Suppose first that $\alpha_\frakp \in S_\frakp^\times$.  Then $\alpha_\frakp$ commutes with $z$, since $U_\frakp$ is orthogonal to $z$.  Similarly, suppose $\alpha_\frakp = 1 + \beta_\frakp \in 1 + J_\frakp$.  Then $\Nm(\alpha_\frakp)=1-\beta_\frakp^2$ with $\beta_\frakp^2 \in \frakN$, and $\beta_\frakp z \in \frakN \Lambda_\frakp$.  Hence
\begin{equation}
\begin{aligned}
\alpha_\frakp^{-1} z \alpha_\frakp &= (1+\beta_\frakp)^{-1}(1-\beta_\frakp)z = \frac{(1-\beta_\frakp)^2}{1-\beta_\frakp^2}z \\
&\equiv \frac{1+\beta_\frakp^2}{1-\beta_\frakp^2} z - \frac{2\beta_\frakp}{1-\beta_\frakp^2}z \equiv z \pmod{2\frakN\Lambda_\frakp} 
\end{aligned}
\end{equation}
since $\beta_\frakp^2 \equiv -\beta_\frakp^2 \pmod{2\frakN}$.  This shows that $\nu_\frakp$ is trivial on $\calO_\frakp^\times = S^\times \cdot (1+J_\frakp)$.  Thus it must be nontrivial on $j$, and indeed we quickly verify that $j^{-1} z j = j^{-1} (-j) z = -z$.  

This shows that the radical character on the normalizer is simply the projection onto 
\begin{equation} \label{eqn:NBw}
N_{B_\frakp^\times}(\calO_\frakp)/F_\frakp^\times \to N_{B_\frakp^\times}(\calO_\frakp)/(F_\frakp^\times \calO_\frakp^\times) \simeq \langle w_\frakp \rangle \simeq \{\pm 1\}.
\end{equation}  
Part (a) follows directly.  

For part (b), since $\ord_\frakp \frakN=\ord_\frakp(b)=\nrd(j)$ is odd, the map in \eqref{eqn:NBw} is also exactly described by the spinor norm character $\theta_\frakp$ in \eqref{eqn:sigmaMM}.  (And this does not hold when $\ord_\frakp \frakN$ is even, in that case $\theta_\frakp=1$)  
\end{proof}

\begin{cor} \label{cor:ALinterp}
Let $\frakM \mid \frakN$ be squarefree, and let $\varepsilon_\frakM$ be the sign vector with $(\varepsilon_\frakM)_\frakp = -1$ if and only if $\frakp \mid \frakM$.  Suppose that $\calO$ is residually unramified.  Then there is a Hecke-equivariant bijection 
\begin{equation}  \label{eqn:MWSW}
M_k(\SO(\widehat{\Lambda}),\nu_\frakM) \xrightarrow{\sim} M_k(\widehat{\calO}^\times)^{\varepsilon_\frakM}
\end{equation}
where the image is the space of forms where the Atkin--Lehner operators have signs agreeing with $\varepsilon_\frakM$.  The map \eqref{eqn:MWSW} restricts to a similar bijection on cuspidal subspaces.
\end{cor}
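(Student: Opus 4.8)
The plan is to obtain this as a character-twisted refinement of \Cref{thm:Heckeequiv}: introduce the radical characters as diamond characters for a finer level, apply the even Clifford map at that level, and then match the resulting isotypic pieces with the Atkin--Lehner eigenspaces on the quaternionic side. Concretely, set $\Khat_0 \colonequals \SO(\widehat\Lambda)$ and $\Khat \colonequals \ker(\nu_\frakN) = \bigcap_{\frakp \mid \frakN} \ker(\nu_\frakp)$. Since each $\nu_\frakp$ factors through $\SO(\widehat\Lambda) \to \SO(\Lambda_\frakp)$ and surjects onto $\{\pm 1\}$ (already modulo $2\frakp$), the subgroup $\Khat \trianglelefteq \Khat_0$ is open and normal with abelian quotient $C \colonequals \Khat_0/\Khat \simeq \prod_{\frakp \mid \frakN} C_2$, and by \eqref{eqn:decomposechar} we have $M_{W_k}(\Khat) = \bigoplus_{\frakM} M_k(\SO(\widehat\Lambda), \nu_\frakM)$, the sum over squarefree $\frakM \mid \frakN$.

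The first real step is to transport this to the quaternionic side. Under the isomorphism $\SO(\widehat\Lambda) \xrightarrow{\sim} N_{\Bhat^\times}(\calOhat)/\Fhat^\times$ induced by the even Clifford map via \eqref{eqn:blang}, the local computation in the proof of \Cref{prop:A21}(a) shows that $\nu_\frakp$ is trivial on $\calO_\frakp^\times$ and descends to the projection $N_{B_\frakp^\times}(\calO_\frakp)/(F_\frakp^\times \calO_\frakp^\times) \simeq \langle w_\frakp \rangle \xrightarrow{\sim} \{\pm 1\}$. Hence $\Khat$ corresponds to $\Fhat^\times \calOhat^\times/\Fhat^\times$, the quotient $C$ is identified with the adelic Atkin--Lehner group $\AL(\calOhat) = N_{\Bhat^\times}(\calOhat)/(\Fhat^\times \calOhat^\times)$, and under this identification $\nu_\frakM = \prod_{\frakp \mid \frakM} \nu_\frakp$ becomes the character of $\AL(\calOhat)$ that is $-1$ exactly on $w_\frakq$ for $\frakp \mid \frakM$ --- that is, the sign character $\varepsilon_\frakM$.

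Next I would upgrade \Cref{thm:Heckeequiv} from level $\SO(\widehat\Lambda)$ to the finer level $\Khat$. Because the even Clifford map is functorial under oriented isometries and under localization (\Cref{thm:functorial} and \eqref{eqn:yupClO}), the same argument as in the proof of \Cref{thm:Heckeequiv} gives a Hecke-equivariant bijection $\SO(V)(F) \backslash \SO(V)(\Fhat)/\Khat \xrightarrow{\sim} B^\times \backslash \Bhat^\times/\Fhat^\times\calOhat^\times$, equivariant for the right-translation action of $C$ on the source and of $\AL(\calOhat)$ on the target. Passing to $W_k$-valued functions --- and noting that working with the oriented isometry group has already quotiented out $\Fhat^\times$, which is precisely the trivial-central-character condition cutting out $M_k(\calOhat^\times)$ --- this yields a Hecke- and $C$-equivariant isomorphism $M_{W_k}(\Khat) \xrightarrow{\sim} M_k(\calOhat^\times)$. (Alternatively, one twists the isomorphism of \Cref{thm:Heckeequiv} itself by the $\nu_\frakM$.) Applying \eqref{eqn:decomposechar} on the source and \eqref{eqn:decomposecharbig} on the target and using the matching $\nu_\frakM \leftrightarrow \varepsilon_\frakM$ from the previous step, the $\nu_\frakM$-component maps isomorphically onto $M_k(\calOhat^\times)^{\varepsilon_\frakM}$, which is the desired bijection \eqref{eqn:MWSW}.

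Finally, for the cuspidal statement: the isomorphism $M_{W_k}(\Khat) \xrightarrow{\sim} M_k(\calOhat^\times)$ carries the orthogonal Eisenstein subspace onto the quaternionic one (the functions factoring through $\nrd$) and preserves the natural inner products, exactly as in \Cref{thm:Heckeequiv}; since the Eisenstein subspace is stable under $\AL(\calOhat)$, this is compatible with the isotypic decomposition, and taking orthogonal complements yields a bijection on cuspidal subspaces. (For $\frakM \neq R$ both Eisenstein subspaces in fact vanish, as in the classical case.) I expect the main obstacle to be the bookkeeping in the two middle steps --- pinning down that $\ker \nu_\frakN$ corresponds exactly to $\Fhat^\times \calOhat^\times$, so that the radical character $\nu_\frakM$ is literally the Atkin--Lehner sign character $\varepsilon_\frakM$, and verifying that both the Hecke-equivariance and the Eisenstein/cuspidal splitting of \Cref{thm:Heckeequiv} persist after refining the level and decomposing by character. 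The delicate local input for the first of these is, however, already carried out in the proof of \Cref{prop:A21}.
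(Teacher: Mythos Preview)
Your proposal is correct and follows essentially the same approach as the paper: invoke the even Clifford isomorphism underlying \Cref{thm:Heckeequiv} and then use \Cref{prop:A21}(a) to match the radical characters $\nu_\frakM$ with the Atkin--Lehner sign characters $\varepsilon_\frakM$, so that the isotypic pieces on the two sides agree. The paper's proof is a two-line sketch of exactly this; your version simply unpacks it, making explicit the finer level $\Khat=\ker(\nu_\frakN)$, its identification with $\Fhat^\times\calOhat^\times/\Fhat^\times$, and the compatibility of the cuspidal/Eisenstein decomposition with the character decomposition.
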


\begin{proof}
We apply \Cref{thm:Heckeequiv} and identify the fixed subspace for the characters using \Cref{prop:A21}.
\end{proof}

\section{Newform theory for orthogonal modular forms} \label{sec:newforms}

In this section, we translate the degeneracy maps and newform theory provided in \cref{sec:hmf} to the orthogonal side.  This upgrades \Cref{cor:ALinterp} to a statement about newforms, and it explains the multiplicities of oldforms observed over $F=\Q$ by Birch \cite[pp.\ 202--203]{Birch}.

The degeneracy maps on the quaternion (and Hilbert) side are of the following form.  Whenever we have $\calO \subseteq \calO'$, equivalently $\calOhat \subseteq \calOhat'$, we get a natural surjective map of pointed sets 
\begin{equation}
\begin{aligned}
\xymatrix{ 
B^\times \backslash \widehat{B}^\times / N_{\widehat{B}^\times}(\calOhat) = \Typ(\calO) \ar[d] \\
B^\times \backslash \widehat{B}^\times / N_{\widehat{B}^\times}(\calOhat') = \Typ(\calO').
}
\end{aligned}
\end{equation}
Described in terms of type sets, given a order locally isomorphic to $\calO$, its adelization is $\alphahat^{-1} \calOhat \alphahat$; the map then makes the order $\alphahat^{-1} \calOhat' \alphahat$ which when intersected with $B$ gives an order locally isomorphic to $\calO'$.  

But we have already identified these double cosets (and type sets) using the even Clifford functor \eqref{eqn:yupClO}: corresponding to $\calOhat'$ is an adelic lattice $\Lambdahat' \subseteq \widehat{V}$ and lattice $\Lambda' = \Lambdahat' \cap V \subseteq V$, giving again a map of pointed sets
\begin{equation}
\begin{aligned}
\xymatrix{ 
\Cl(\Lambda) = \SO(V) \backslash \SO(\widehat{V}) / \SO(\widehat{\Lambda}) \ar@{<->}[r] \ar[d] & 
B^\times \backslash \widehat{B}^\times / N_{\widehat{B}^\times}(\calOhat) = \Typ(\calO) \ar[d] \\
\Cl(\Lambda') = \SO(V) \backslash \SO(\widehat{V}) / \SO(\widehat{\Lambda}') \ar@{<->}[r] & 
B^\times \backslash \widehat{B}^\times / N_{\widehat{B}^\times}(\calOhat') = \Typ(\calO')
}
\end{aligned}
\end{equation}
(We do not claim that $\Lambdahat \subseteq \Lambdahat'$, only that $\SO(\Lambdahat) \leq \SO(\Lambdahat')$.)  On the level of class sets, the map is described in a similar way to the quaternionic case: for a lattice which is locally isometric to $\Lambda$ its adelization is $\widehat{g} \Lambdahat$, we make the adelic lattice $\widehat{g} \Lambdahat'$ and then intersect with $V$ to get a lattice locally isometric to $\Lambda'$. 

Orthogonal and quaternionic modular forms are defined by equivariant maps on these sets.  The degeneracy operators on quaternionic modular forms commute with   
\begin{equation} 
\alpha_{\Lambda'} \colon M_W(\SO(\Lambdahat')) \hookrightarrow M_{W'}(\SO(\Lambdahat)) 
\end{equation}
via pullback.  These maps give an old and new subspace as usual.

%

\begin{defn}
The \defi{old subspace} $M_W^{\textup{old}}(\SO(\Lambdahat)) \subseteq M_W(\SO(\Lambdahat))$ is the span of the image of all degeneracy maps $\alpha_{\Lambda'}$ for lattices $\Lambda'$ corresponding to superorders $\calO' \supseteq \calO$.  The \defi{new subspace} $M_W^{\textup{new}}(\SO(\Lambdahat)) \subseteq M_W(\SO(\Lambdahat))$ is the orthogonal complement of the old subspace with respect to the Petersson inner product.
\end{defn}

%
%
%

\begin{thm}  \label{thm:mainthmo}
If $\calO$ is residually unramified, then there is a Hecke-equivariant bijection 
\[ M_k^{\textup{new}}(\SO(\Lambdahat),\nu_\frakM) \xrightarrow{\sim} M_k^{\textup{new}}(\calOhat)^{\eps_\frakM} \]
that restricts to the cuspidal subspaces.
\end{thm}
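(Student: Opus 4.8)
The plan is to upgrade \Cref{cor:ALinterp}, which already furnishes a Hecke-equivariant bijection $M_k(\SO(\Lambdahat),\nu_\frakM)\xrightarrow{\sim}M_k(\calOhat^\times)^{\eps_\frakM}$ on the full spaces, to one respecting the old/new decomposition. On each side the new subspace is by definition the orthogonal complement of the old subspace under the weighted dot product of \cref{sec:amfs}; the even Clifford bijection $\Cl\Lambda\leftrightarrow\Typ\calO$ of \eqref{eqn:yupClO} matches point stabilizers and hence these inner products, as was already used in \Cref{thm:Heckeequiv}. So it suffices to show the bijection of \Cref{cor:ALinterp} carries the old subspace of $M_k(\SO(\Lambdahat),\nu_\frakM)$ onto the old subspace of $M_k(\calOhat^\times)^{\eps_\frakM}$; the new subspaces then correspond by taking orthogonal complements.

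For this I would first check that the even Clifford correspondence intertwines the degeneracy maps. Given a residually unramified superorder $\calO'\supseteq\calO$ and the lattice $\Lambda'$ with $\SO(\Lambdahat)\leq\SO(\Lambdahat')$ attached to it via \eqref{eqn:yupClO}, the square of pointed sets displayed just before the theorem commutes: the identification $\Cl\Lambda\leftrightarrow\Typ\calO$ is natural with respect to the two degeneracy maps $\Cl\Lambda\to\Cl\Lambda'$ and $\Typ\calO\to\Typ\calO'$, each obtained by pushing a local class forward and intersecting with $V$, resp.\ $B$. This is the functoriality of $\Clf^0$ applied at each finite place (\Cref{thm:functorial}): the even Clifford algebra of a push-forward of $\widehat{g}\Lambda$ is the corresponding push-forward of $\widehat{\alpha}^{-1}\calOhat\widehat{\alpha}$. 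Dualizing, the orthogonal degeneracy map $\alpha_{\Lambda'}$ is intertwined with the quaternionic pullback along $\Typ\calO\to\Typ\calO'$, and one checks this is compatible with the radical character grading on the one side and the Atkin--Lehner grading on the other---here the input is the local description of the normalizer in \Cref{prop:A21} together with the parity bookkeeping for degeneracy maps in \Cref{prop:ALSK} and \Cref{prop:alltheAL}(b). Taking spans over all such $\calO'$, it follows that the bijection of \Cref{cor:ALinterp} restricts to one on old subspaces, hence (orthogonal complements) to $M_k^{\textup{new}}(\SO(\Lambdahat),\nu_\frakM)\xrightarrow{\sim}M_k^{\textup{new}}(\calOhat^\times)^{\eps_\frakM}$. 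Finally, since \Cref{thm:Heckeequiv} already shows the even Clifford bijection preserves the Eisenstein subspace and hence the cuspidal subspace, it preserves the intersection of the new and cuspidal subspaces, which gives the asserted restriction.

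The step I expect to be the main obstacle is the place-by-place verification that $\Clf^0$ genuinely intertwines the two degeneracy maps, because---as noted just before the theorem---one does not have $\Lambdahat\subseteq\Lambdahat'$ on the nose, only $\SO(\Lambdahat)\leq\SO(\Lambdahat')$; one must check that the local normalizer inclusions $N_{B_\frakp^\times}(\calO_\frakp)\leq N_{B_\frakp^\times}(\calO'_\frakp)$ underlying this correspond, after applying $\Clf^0$, to the lattice push-forward that defines $\alpha_{\Lambda'}$. This is precisely where the residually unramified hypothesis does its work: at each prime dividing $\frakN$ the relevant orders form a chain of residually unramified orders, each with a single Atkin--Lehner generator (\Cref{prop:alltheAL}(a)), so the normalizers behave monotonically and the explicit local picture of \Cref{prop:A21} applies; the remaining work is the parity condition on exponents, handled exactly as in the Hilbert-side newform theory recorded in \Cref{prop:ALSK}.
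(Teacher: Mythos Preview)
Your approach is correct and matches the paper's in spirit, but you are making it harder than necessary.  The key point you may have overlooked is that in the paper the orthogonal old subspace is \emph{defined} as the span of the images of the degeneracy maps $\alpha_{\Lambda'}$, where $\Lambda'$ ranges over lattices \emph{corresponding to superorders $\calO'\supseteq\calO$} via the even Clifford bijection \eqref{eqn:yupClO}.  The commutative square displayed immediately before the theorem already records that the map $\Cl\Lambda\to\Cl\Lambda'$ is identified with $\Typ\calO\to\Typ\calO'$ under this bijection, so pullback along one is literally pullback along the other.  Hence the orthogonal old subspace is, by construction, the image of the quaternionic old subspace under the isomorphism of \Cref{cor:ALinterp}; the paper's entire proof is the single sentence ``restrict \Cref{cor:ALinterp} to the new subspace on quaternionic modular forms; by definition, this corresponds to the new subspace on orthogonal modular forms.''

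In particular, your anticipated ``main obstacle''---a place-by-place verification that $\Clf^0$ intertwines the two degeneracy maps---dissolves: the orthogonal degeneracy map $\alpha_{\Lambda'}$ is \emph{defined} to be the pullback along the map making the square commute, so there is nothing further to check.  Your appeals to \Cref{prop:ALSK} and \Cref{prop:alltheAL}(b) are likewise unneeded here; those results govern the passage from quaternionic to Hilbert modular forms, not the orthogonal--quaternionic comparison in this theorem.
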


\begin{proof}
We restrict \Cref{cor:ALinterp} to the new subspace on quaternionic modular forms; by definition, this corresponds to the new subspace on orthogonal modular forms.
\end{proof}


\section{Orthogonal algorithms} \label{sec:orthalg}

A general reference for algorithms on orthogonal modular forms on lattices is Greenberg--Voight \cite{gv}.  Further practical improvements are given by Assaf--Fretwell--Ingalls--Logan--Secord--Voight \cite[\S 3]{defortho}.

We advance these algorithms with the following additions.  First, we construct a ternary quadratic space with cyclic discriminant module given invariants, as follows.

\begin{alg} \label{alg:ternary}
Given as input
\begin{itemize}
\item a totally real field $F$, 
\item a factored ideal $\frakN=\prod_{i=1}^r \frakp_i^{e_i}$ of $R=\Z_F$, 
\item a divisor $\frakN_{-} \mid \frakN$ such that if $\frakp^e \parallel \frakN_{-}$ then $e$ is odd, and
\item a squarefree divisor $\frakD \mid \frakN_{-}$ whose number of prime divisors has the same parity as $[F:\Q]$;
\end{itemize}
we return as output a residually unramified order $\calO$ in a suitable quaternion algebra of discriminant $\frakD$.    
\begin{enumalg}
\item Compute a totally definite quaternion algebra $B$ of discriminant $\frakD$ \cite[Algorithm 4.1]{gv}.
\item Compute a maximal order $\calO \subseteq B$ \cite[\S 7]{Voight:matring}.
\item Compute an Eichler order $\calO_1 \subseteq \calO$ of level $\frakN_+ \colonequals \frakN/\frakN_{-}$ by computing a $\frakp$-matrix ring for all $\frakp \mid \frakN_+$ \cite[Corollary 7.13]{Voight:matring} and taking the preimage of the standard Eichler order.
\item Compute $\calO_2 \subseteq \calO_1$ residually inert for $\frakp \mid \frakD$, for each prime $\frakp$:
\begin{enumalgalph}
\item Compute $J_\frakp \colonequals \rad(\calO_\frakp) = [\calO_\frakp,\calO_\frakp]$.
\item Lift a basis of the quotient $\calO_\frakp/J_\frakp$ to $S_\frakp$.
\item Take $\calO_{2,\frakp} = S_\frakp + J_\frakp^{(e+1)/2}$ for $\frakp^e \parallel \frakN_{-}$ (with $e$ odd).
\end{enumalgalph}
\item Compute $\calO_3 \subseteq \calO_2$ residually inert for $\frakp \mid \frakN_{-}$ but $\frakp \nmid \frakD$:
\begin{enumalgalph}
\item Compute a $\frakp$-matrix ring $\calO_\frakp \simeq \M_2(R_\frakp)$. 
\item Compute $S_\frakp$, the valuation ring in the quadratic unramified extension $K_\frakp \supseteq F_\frakp$.  
\item By the regular representation, compute $S_\frakp \hookrightarrow \M_2(R_\frakp)$ and identify $S_\frakp$ with its preimage in $\calO_\frakp$.  Compute $\calO_{\frakp}' = S_\frakp + \frakp \calO_\frakp$, let $J_\frakp = \rad(\calO_\frakp')$, and take $\calO_{3,\frakp}' = S_\frakp + J_\frakp^{e/2}$.
\end{enumalgalph}
\item Return $\calO_3$.
\end{enumalg}
\end{alg}

\begin{thm}
\Cref{alg:ternary} returns correct output and runs in probabilistic polynomial time.
\end{thm}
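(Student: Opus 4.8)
The plan is to verify correctness step-by-step against the structure theory recalled in \cref{sec:hmf}, and then to account for each subroutine's running time. Correctness reduces to three claims: (1) the algebra $B$ produced in Step~1 is suitable for $\frakN$ in the sense of the definition preceding \Cref{prop:unique}; (2) at each prime $\frakp \mid \frakN$, the completion $\calO_{3,\frakp}$ is residually unramified with $\discrd(\calO_{3,\frakp}) = \frakp^{e}$ where $\frakp^e \parallel \frakN$, and is residually inert exactly for $\frakp \mid \frakN_{-}$; and (3) the pieces glue to a global order $\calO_3$ with $\discrd(\calO_3)=\frakN$ and of the asserted genus. Claim~(1) is immediate from the hypotheses on the input: $\frakD$ has the same number of prime factors as the parity of $[F:\Q]$, so a totally definite $B$ with $\disc B = \frakD$ exists and is computed by \cite[Algorithm 4.1]{gv}, and every $\frakp \mid \frakD$ divides $\frakN_{-}$, hence has $v_\frakp(\frakN)$ odd — exactly suitability.

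For claim~(2) I would argue prime by prime, using the explicit local description recalled around \eqref{eqn:Opembed}. At primes $\frakp \mid \frakN_{+}$, the order is Eichler of the correct level by the standard theory of $\frakp$-matrix rings \cite[Corollary 7.13]{Voight:matring}, and Steps~4--5 leave it untouched, so it is residually split with the right discriminant. At primes $\frakp \mid \frakN_{-}$, I split into the cases $\frakp \mid \frakD$ (Step~4) and $\frakp \nmid \frakD$ (Step~5). In the division-algebra case, $\calO_\frakp$ is already the maximal order, $J_\frakp = \rad(\calO_\frakp)$ satisfies $J_\frakp^2 = \frakp\calO_\frakp$, and $S_\frakp = \Clf^0(U_\frakp)$ lifts the residue extension; the order $S_\frakp + J_\frakp^{(e+1)/2}$ is then exactly the order $\quat{S_\frakp,\pi^e}{R_\frakp}$ appearing before \Cref{prop:unique}, which \cite[24.3.7, 24.4.9]{Voight:quatbook} identifies as residually inert with $\discrd = \frakp^e$ (using that $e$ is odd, consistent with $B_\frakp$ a division algebra). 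In the split case, one first passes to the suborder $\calO_\frakp' = S_\frakp + \frakp\calO_\frakp$ — which is residually inert of reduced discriminant $\frakp^2$ — and then takes $S_\frakp + J_\frakp^{e/2}$ with $J_\frakp = \rad(\calO_\frakp')$; that $e$ is even here is forced because $B_\frakp$ is split, and again \cite[24.3.7]{Voight:quatbook} gives residual inertness with $\discrd = \frakp^e$. Claim~(3) follows from the local-global principle for orders \cite[\S 9.4, \S 24.3]{Voight:quatbook}: the successive orders $\calO \supseteq \calO_1 \supseteq \calO_2 \supseteq \calO_3$ differ from a maximal order only at the finitely many primes dividing $\frakN$, so the global $\calO_3$ obtained by intersection has the prescribed completions everywhere, hence $\discrd(\calO_3) = \frakN$ and the genus asserted by \Cref{prop:unique}. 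This is the step I expect to require the most care: one must check that each "take the preimage of a local order" operation is realized by an explicit finite computation (intersecting $\calO$ with a suitable local override), and that the intersections are taken in the right order so that no earlier modification is destroyed — in particular that Step~5's passage through $\calO_\frakp'$ is compatible with the Eichler structure installed in Step~3 at the \emph{other} primes.

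For the running-time claim, each subroutine is known to be probabilistic polynomial time: computing a totally definite quaternion algebra of prescribed discriminant \cite[Algorithm 4.1]{gv}, computing a maximal order \cite[\S 7]{Voight:matring}, and computing $\frakp$-matrix rings \cite[Corollary 7.13]{Voight:matring} are all polynomial-time (the probabilistic ingredient being the factorization or root-finding inside these, already accounted for). Computing radicals $\rad(\calO_\frakp) = [\calO_\frakp, \calO_\frakp]$ is linear algebra over the residue field; lifting a basis of $\calO_\frakp/J_\frakp$ to $S_\frakp$ is Hensel lifting; and forming $S_\frakp + J_\frakp^{m}$ and the global intersection are $R$-module operations of polynomial size. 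Since the number of primes $\frakp \mid \frakN$ is polynomial in the input and each is handled in polynomial time, and the exponents $e_i$ appear only through $J_\frakp^{\lceil(e+1)/2\rceil}$ (computed by repeated squaring), the whole algorithm runs in probabilistic polynomial time in the bit-size of $(F, \frakN, \frakN_-, \frakD)$. I would close by remarking that the only essential use of randomness is inside the cited subroutines, so the output is always correct when it terminates.
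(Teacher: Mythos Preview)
Your proposal is correct and follows the same approach as the paper---deferring Steps~1--3 to the cited references and reducing Steps~4--5 to linear algebra over residue fields plus a probabilistic ingredient---though you supply considerably more detail on correctness than the paper's terse three-sentence proof, which addresses almost exclusively the running time. The one point the paper makes explicit that you leave implicit is that the probabilistic step in Steps~4--5 is specifically finding an irreducible quadratic polynomial over $\F_\frakp$ (needed to construct the unramified quadratic extension $S_\frakp$).
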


\begin{proof}
Steps 1--3 run in probabilistic polynomial time by the references given.  (See in particular Voight \cite{Voight:matring} for a discussion of exact global representations for $\frakp$-adic computations.)  In steps 4 and 5 we have only linear algebra steps other than finding an irreducible quadratic polynomial over $\F_\frakp$, which can be found in probabilistic polynomial time. 
\end{proof}

To compute spaces of orthogonal modular forms with radical character, there is no trouble computing the radical character: part of the above suite of algorithms efficiently computes a $\frakp$-splitting as in \eqref{eqn:psplitting}, and from there we just compute the action on the radical generator $z$ (by matrix multiplication).  

In the special case where the radical character is given spinor norm (when $\ord_\frakp \frakN$ is odd, \Cref{cor:ALinterp}), we have the following extraordinary formula.

\begin{lemma} \label{lem:spinornorm}
Let $V$ be a finite-dimensional nondegenerate quadratic space over a field $k$ with $\opchar k \neq 2$.  Let $\sigma \in \SO(V)$ have $\det(1+\sigma) \neq 0$.  Then the spinor norm of $\sigma$ is
\[ \theta(\sigma)=\det((1+\sigma)/2) = 2^n \det(1+\sigma) \in k^{\times}/k^{\times 2}. \]
If further $n=3$, then $\theta(\sigma)=1+\tr(\sigma)$.  
\end{lemma}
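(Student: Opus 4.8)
The plan is to use the Cayley transform to reduce the computation of the spinor norm to an explicit determinant. Given $\sigma \in \SO(V)$ with $\det(1+\sigma) \neq 0$, set $\tau \colonequals (1-\sigma)(1+\sigma)^{-1}$, the Cayley transform of $\sigma$. A standard computation shows $\tau$ is skew-symmetric with respect to the bilinear form $T$, and conversely $\sigma = (1-\tau)(1+\tau)^{-1}$. The key classical fact (see e.g.\ the treatment in Zassenhaus or in O'Meara's book on quadratic forms) is that for such $\sigma$, writing $\sigma$ as a product of reflections, the spinor norm can be read off: one has $\theta(\sigma) = \det\bigl((1+\sigma)/2\bigr) \bmod k^{\times 2}$. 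The cleanest route is to verify this directly for the generic element and then note that elements with $\det(1+\sigma) \neq 0$ are Zariski dense, so the identity of the two maps $\sigma \mapsto \theta(\sigma)$ and $\sigma \mapsto \det((1+\sigma)/2)$ (both valued in $k^\times/k^{\times 2}$, the latter visibly a morphism on the open set where it is defined) extends.

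Concretely, I would first record that if $\sigma = \tau_v$ is the reflection in an anisotropic vector $v$, then $1+\sigma$ acts as $2$ on $v^\perp$ and as $0$ on $kv$—so that reflection case is degenerate, and instead one composes two reflections $\tau_v \tau_w$ to get a typical element with $\det(1+\sigma)\neq 0$; then a direct $2 \times 2$ computation on the span of $v,w$ (the map is the identity on the orthogonal complement) gives $\det\bigl((1+\tau_v\tau_w)/2\bigr) \equiv Q(v)Q(w) \equiv \theta(\tau_v\tau_w) \pmod{k^{\times 2}}$. Since every $\sigma$ with $\det(1+\sigma)\neq 0$ is a product of an even number of reflections and both sides are multiplicative (the left side because $\theta$ is a homomorphism, the right side on the dense open locus by a short determinant manipulation using $1+\sigma_1\sigma_2 = \sigma_1(\sigma_1^{-1} + \sigma_2) = \cdots$), the general identity follows. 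The rescaling $\det((1+\sigma)/2) = 2^{-n}\det(1+\sigma)$ and the observation that $2^{-n} \equiv 2^n \pmod{k^{\times 2}}$ then give the first displayed formula.

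For the final clause with $n=3$: here $1+\sigma$ is a $3\times 3$ matrix, and I would compute its determinant via the characteristic polynomial of $\sigma$. Since $\sigma \in \SO(V)$ with $\dim V = 3$, we have $\det\sigma = 1$ and $\sigma$ is conjugate (over $\bar k$) to an element of $\SO_3$ whose eigenvalues are $1, \zeta, \zeta^{-1}$; hence $\det(1+\sigma) = (1+1)(1+\zeta)(1+\zeta^{-1}) = 2(2 + \zeta + \zeta^{-1}) = 2(1 + \tr\sigma)$, using $\tr\sigma = 1 + \zeta + \zeta^{-1}$. Therefore $\det((1+\sigma)/2) = 2^{-3}\cdot 2(1+\tr\sigma) = (1+\tr\sigma)/4 \equiv 1 + \tr\sigma \pmod{k^{\times 2}}$, which is the claim.

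The main obstacle is the core identity $\theta(\sigma) = \det((1+\sigma)/2)$ in $k^\times/k^{\times 2}$: making the Cayley-transform / reflection argument fully rigorous requires care that the reduction to products of two reflections really covers all $\sigma$ with $\det(1+\sigma)\neq 0$ (it does, since such $\sigma$ has no eigenvalue $-1$, hence in the orthogonal decomposition into $2$-dimensional rotation blocks and a $\pm 1$ part the $-1$ part is absent, and each rotation block is a product of two reflections), and that the multiplicativity of $\sigma \mapsto \det(1+\sigma) \bmod k^{\times 2}$ genuinely holds on the relevant locus. Everything after that—the normalization constants and the $n=3$ trace identity—is a short eigenvalue computation.
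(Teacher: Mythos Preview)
Your handling of the general formula $\theta(\sigma)=\det((1+\sigma)/2)$ is at the same level as the paper's: both ultimately defer to Zassenhaus. Your additional sketch via multiplicativity of $\sigma\mapsto\det(1+\sigma)\bmod k^{\times 2}$ is not quite complete---the identity $1+\sigma_1\sigma_2=\sigma_1(\sigma_1^{-1}+\sigma_2)$ does not by itself yield $\det(1+\sigma_1\sigma_2)\equiv\det(1+\sigma_1)\det(1+\sigma_2)$ mod squares, and the block decomposition into $2$-dimensional rotation planes can fail over a general field when $\sigma$ is not semisimple (e.g.\ unipotent rotations on an isotropic plane). You correctly flag this as the main obstacle, so no harm done; just be aware that this multiplicativity is essentially the content of Zassenhaus's theorem rather than a short lemma.

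For $n=3$ the two arguments are genuinely different. The paper gives a \emph{self-contained} proof that does not use the general formula: writing $\sigma=\tau_u\tau_v$ via Cartan--Dieudonn\'e, it computes $\sigma$ explicitly on the basis $\{u,v,w\}$ (with $w\perp u,v$) and reads off $\det(1+\sigma)=T(u,v)^2/(Q(u)Q(v))$ and $\tr(\sigma)=T(u,v)^2/(Q(u)Q(v))-1$, so $1+\tr(\sigma)\equiv Q(u)Q(v)=\theta(\sigma)$ mod squares directly. Your route instead observes that the characteristic polynomial of $\sigma\in\SO(V)$ in dimension $3$ is palindromic, forcing eigenvalues $1,\zeta,\zeta^{-1}$, whence $\det(1+\sigma)=2(1+\tr\sigma)$; this is slicker but, as you have written it, depends on the general Zassenhaus identity to finish. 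Either way the $n=3$ claim is established; the paper's version has the advantage of being independent of the cited reference, while yours exhibits the $\det(1+\sigma)=2(1+\tr\sigma)$ relation cleanly and would work equally well once the general formula is in hand.
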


\begin{proof}
See Zassenhaus \cite[Theorem, Corollary, p.~446]{Zassenhaus}, who actually shows one you can \emph{define} the spinor norm this way (without the Clifford algebra).  For completeness, we give a direct proof in the case $n=3$.  

    The case $\sigma=\id_V$ is immediate.  By the Cartan--Dieudonn\'e theorem, 
    $\sigma=\tau_u\tau_v$ is a product of reflections where $u,v \in V$ are not colinear, where
    \[
        \tau_u(x) = x - \frac{T(x,u)}{Q(u)}u
    \]
    (and similarly for $v$).  Let $w\in V$ be orthogonal to
    $u$ and $v$, so that $\set{u,v,w}$ is a basis of $V$.

    We compute $\sigma$ in this basis:
    \begin{align*}
        \sigma(u) &= \tau_u\left(u-\frac{T(u,v)}{Q(v)}v\right)
        = -u - \frac{T(u,v)}{Q(v)}\left(v-\frac{T(v,u)}{Q(u)}u\right) \\
        & = \left(\frac{T(u,v)^2}{Q(u)Q(v)}-1\right)u
          - \frac{T(u,v)}{Q(v)}v
        \\
        \sigma(v) &= \tau_u(-v) = \frac{T(v,u)}{Q(u)}u -v
        \\
        \sigma(w) &= w
    \end{align*}
    Hence 
    \[ \det(1+\sigma) = \frac{T(u,v)^2}{Q(u)Q(v)} \]
    and
    \[ \tr(\sigma) = \frac{T(u,v)^2}{Q(u)Q(v)}-1, \] 
    and the
    claim follows since $\theta(\sigma)=Q(u)Q(v) \in k^\times/k^{\times 2}$.  
\end{proof}

For completeness, when $\tr(\sigma)=-1$, we can still do something nice.  

\begin{cor} \label{lem:trminus1}
If $\sigma \in \SO(V)$ with $\dim_k V =3$ has $\tr(\sigma)=-1$, then $\sigma^2=\id_V$ and $\ker(\sigma-1)=kw$ for some nonzero $w \in V$ and $\theta(\sigma)=Q(w)$.
\end{cor}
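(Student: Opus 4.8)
The plan is to combine a short eigenvalue computation for an element of $\SO(V)$ in dimension $3$ with the Cartan--Dieudonn\'e factorization already exploited in the proof of \Cref{lem:spinornorm}.

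First I would determine the characteristic polynomial of $\sigma$. Since $\sigma\in\OO(V)$, it is conjugate via the Gram matrix to its inverse-transpose, so its characteristic polynomial is self-reciprocal; using $\det(\sigma)=1$ and the resulting identity $\tr(\wedge^2\sigma)=\det(\sigma)\tr(\sigma^{-1})=\tr(\sigma)=-1$, the characteristic polynomial is $t^3+t^2-t-1=(t-1)(t+1)^2$. In particular the eigenvalue $1$ has algebraic multiplicity $1$, so $\ker(\sigma-1)$ is a line; write it as $kw$ with $w\neq 0$.

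Next I would prove $\sigma^2=\id_V$ via a two-reflection factorization. By Cartan--Dieudonn\'e write $\sigma=\tau_u\tau_v$ with $u,v\in V$ anisotropic and not colinear. The trace computation in the proof of \Cref{lem:spinornorm} gives $\tr(\sigma)=T(u,v)^2/(Q(u)Q(v))-1$, so the hypothesis $\tr(\sigma)=-1$ forces $T(u,v)=0$, i.e.\ $u\perp v$. Reflections along orthogonal vectors commute, hence $\sigma^2=\tau_u^2\tau_v^2=\id_V$. Taking $w$ to be a nonzero vector in the (nondegenerate) orthogonal complement of the nondegenerate plane $\langle u,v\rangle$, the matrix of $\sigma$ in the basis $\{u,v,w\}$ is $\diag(-1,-1,1)$; since $\opchar k\neq 2$ this confirms $\ker(\sigma-1)=kw$ and $\ker(\sigma+1)=\langle u,v\rangle=w^\perp$, matching the previous paragraph. (Alternatively, one can avoid Cartan--Dieudonn\'e: the generalized $(-1)$-eigenspace $W=\ker(\sigma+1)^2$ is $2$-dimensional and orthogonal to $kw$ because $T(x,y)=T(\sigma x,\sigma y)$ and $\sigma-1$ is invertible on $W$, forcing $w$ anisotropic; then $\sigma|_W\in\OO(W)$ has characteristic polynomial $(t+1)^2$ and must equal $-\id_W$ since $\OO_2$ has no nontrivial unipotent element when $\opchar k\neq 2$.)

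Finally, for the spinor norm the factorization $\sigma=\tau_u\tau_v$ gives $\theta(\sigma)=Q(u)Q(v)\in k^\times/k^{\times 2}$. Under the identification $\SO(V)\simeq B^\times/F^\times$ used to define $\theta$, with $Q=\nrd|_{B^0}$, the map $\tau_u\tau_v$ is conjugation by $uv$, and a short computation in $B^0$ (using $u\perp v$, $\trd(u)=\trd(v)=0$) shows $uv$ is orthogonal to both $u$ and $v$, hence $uv\in kw$; so $\sigma$ is conjugation by $w$ and $\theta(\sigma)=\nrd(w)=Q(w)$. Equivalently, staying on the orthogonal side, $\{u,v,w\}$ is an orthogonal basis, so $Q(u)Q(v)Q(w)\equiv\disc(V)$, which is a square here, giving $\theta(\sigma)=Q(u)Q(v)\equiv Q(w)^{-1}\equiv Q(w)$. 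I expect this last step --- reconciling the product-of-reflections value $Q(u)Q(v)$ with the claimed value $Q(w)$ --- to be the only genuine subtlety: it is exactly the place where one uses that the ambient ternary form is the reduced norm on $B^0$ (equivalently, has trivial discriminant), so that the spinor-norm character on $\SO(V)\simeq B^\times/F^\times$ really records $\nrd(w)$. Everything else is routine linear algebra over a field of characteristic not $2$.
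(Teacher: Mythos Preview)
Your argument is correct and matches the paper's proof almost exactly: both use Cartan--Dieudonn\'e together with the trace formula from \Cref{lem:spinornorm} to force $T(u,v)=0$, obtain $\sigma=\diag(-1,-1,1)$ in the orthogonal basis $\{u,v,w\}$, and then conclude $\theta(\sigma)=Q(u)Q(v)\equiv Q(w)$ via the discriminant relation $\disc Q=4Q(u)Q(v)Q(w)$. Your characteristic-polynomial opening and the parenthetical alternative avoiding Cartan--Dieudonn\'e are extra (and correct) embellishments not in the paper, and you are right to flag that the last step needs $\disc Q$ to be a square---the paper invokes the same discriminant identity and leaves that hypothesis implicit.
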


\begin{proof}
If $\tr(\sigma)=-1$ then $\sigma=\tau_u\cdot\tau_v$ with $T(u,v)=0$, so taking the orthogonal complement we find an orthogonal basis $\beta=\{u,v,w\}$ for $V$ such that $[\sigma]_\beta=\diag(-1,-1,1)$ is diagonal, and in particular $\sigma^2=1$ and $\ker(\sigma-1)=k w$.  Since the spinor norm of $\sigma$ is by definition $\theta(\sigma)=Q(u)Q(v) \in k^{\times}/k^{\times 2}$ and the discriminant of $Q$ is $\disc Q = 4 Q(u)Q(v)Q(w)$, the result holds.  
\end{proof}

To find a vector $w$ with $\sigma(w)=w$ as in \Cref{lem:trminus1}, better than computing a kernel, we see that since $\sigma$ is an involution, we can take $w \colonequals v+\sigma(v)$ for any $v \in V$ whenever $w \neq 0$.  Since $\sigma \neq -1$, we need only try $v$ in a basis for $V$.  

%
%
%
%
%
%
%

We conclude with a runtime analysis, which we state in more generality.  

\begin{thm}
There exists an explicit algorithm that, given as input a totally positive definite ternary quadratic space $V$ over a totally real number field $F$ with $n=[F:\Q]$, an $R$-lattice $\Lambda \subset V$, a nonzero prime ideal $\frakp \subset R$, computes the Hecke operator $T_{\frakp}$ using 
\[ \widetilde{\mathcal{O}}(\Nm(\frakp) H_{3n}(\|\Lambda\| d^2)) \]
bit operations, where $\|\Lambda\|$ denotes the number of bits required to encode $\Lambda$, $d=\#\Cl \Lambda$, and $H_{3n}$ is a polynomial that determines the number of bit operations to compute a Hermite normal form of a matrix in $\M_{3n}(\Z)$.
\end{thm}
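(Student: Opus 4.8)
The plan is to make the running time of Kneser's $\frakp$-neighbour method explicit over $R$, via restriction of scalars, exactly as sketched for $F=\Q$ in \cref{sec:classical}; I will treat the generic case $\frakp\nmid\frakN$ (the case $\frakp\mid\frakN$ is handled by the analogous local construction at no greater cost). The setup is to fix reduced representatives $\Lambda=\Lambda_1,\dots,\Lambda_d$ of $\Cl\Lambda$, each stored as the Gram matrix of the rank-$3n$ positive definite $\Z$-lattice obtained by restriction of scalars, together with its $R$-module structure, and kept in a canonical form — over $\Q$ the Eisenstein--Schiemann reduction of ternary forms, in general a canonical form for positive definite $\Z$-lattices of rank $3n$. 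To compute $[T_\frakp]$ one then runs, for each column index $i$: enumerate the $\frakp$-neighbours $\Pi$ of $\Lambda_i$; for each, determine the class $[\Lambda_j]$ with $\Pi\simeq\Lambda_j$; and add to the entry $[T_\frakp]_{ij}$ the appropriate contribution ($1$ in trivial weight; in general the value in $\End W$, or for a radical character in $\{\pm1\}$, obtained from the transition isometry by a matrix multiplication and, for $\nu_\frakp$, by its action on the radical generator $z$). Correctness is exactly the $\frakp$-neighbour description of $T_\frakp$ together with \cref{thm:Heckeequiv}.

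\textbf{Enumeration.} The $\frakp$-neighbours of $\Lambda_i$ are parametrised by the $\F_\frakp$-points of the smooth conic cut out by the reduction $\overline{Q}$ on $\Lambda_i/\frakp\Lambda_i\simeq\F_\frakp^3$, of which there are exactly $\Nm(\frakp)+1$; running over these lines, lifting a generator $v$ so that $Q(v)\equiv 0\psmod{\frakp^2}$, and forming $\Pi=\frakp^{-1}v+\{w\in\Lambda_i:T(v,w)\in\frakp R_\frakp\}$ each take $\widetilde{O}(\|\Lambda\|)$ bit operations of linear algebra over $R/\frakp R$ together with $\widetilde{O}(\log\Nm(\frakp))$ of residue-field arithmetic. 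Thus all neighbour lists are produced in $\widetilde{O}(d\,\Nm(\frakp)\,\|\Lambda\|)$, which is dominated by the identification step.

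\textbf{Identification and the count.} Given a neighbour $\Pi$ — a rank-$3n$ positive definite $\Z$-lattice whose Gram entries have bit-size $\widetilde{O}(\|\Lambda\|)$, bounded because $\Pi$ lies in the single fixed genus, whose discriminant and reduced invariants are controlled by $\|\Lambda\|$ — I would reduce $\Pi$ (and, where needed, the candidates $\Lambda_j$) to canonical form and compare; the integer linear algebra involved (passing between bases, taking intersections, tracking transition matrices) is dominated by Hermite normal form computations on matrices in $\M_{3n}(\Z)$, at cost $H_{3n}(\widetilde{O}(\|\Lambda\|))$ each. The crux is then to bound the number of such HNF invocations: there are $d$ columns, each with $\Nm(\frakp)+1$ neighbours, and identifying one neighbour's class costs, in the worst case, $\widetilde{O}(d)$ reductions or comparisons against the $d$ known classes, for a total of $\widetilde{O}(d^2\,\Nm(\frakp))$ Hermite normal forms. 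Since HNF is superlinear in its input size, $d^2\cdot H_{3n}(x)\le H_{3n}(d^2 x)$, so the whole computation costs $\widetilde{O}\!\bigl(\Nm(\frakp)\,H_{3n}(\|\Lambda\|\,d^2)\bigr)$ bit operations after absorbing logarithmic factors into $\widetilde{O}$. The representatives $\Lambda_1,\dots,\Lambda_d$ are computed by iterated neighbours plus reduction, which is of the same shape and either folds into this bound or is treated as a one-time precomputation.

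\textbf{Main obstacle.} The points I expect to need genuine care are (i) the uniform bit-size bound on the lattices arising along the neighbour chains and their reductions, since it is precisely this bound that fixes the argument $\|\Lambda\|\,d^2$ of $H_{3n}$ and hence the final exponent of $d$; and (ii) justifying that the isometry decision, or canonical-form reduction, for these rank-$3n$ $\Z$-lattices is genuinely dominated by $\M_{3n}(\Z)$ Hermite normal form computations rather than by, say, shortest-vector enumeration, so that $H_{3n}$ is the right complexity parameter. With these in hand, the bound follows from the bookkeeping above.
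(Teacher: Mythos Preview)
Your approach is essentially the same as the paper's: enumerate isotropic lines on the conic over $\F_\frakp$, build each neighbour via a hyperbolic complement, put it into Hermite normal form over $\Z$ via restriction of scalars, and identify its class. Your accounting for the $d^2$ factor (via the $d$ columns times the $d$-way class lookup, then the superlinearity inequality $d^2\,H_{3n}(x)\le H_{3n}(d^2 x)$) is in fact more explicit than the paper's sketch.

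The one substantive difference is precisely your obstacle (ii). The paper does \emph{not} claim that isometry testing or canonical-form reduction is dominated by HNF; instead it invokes the lattice isomorphism algorithm of Haviv--Regev, which decides isometry of rank-$m$ positive definite $\Z$-lattices in $m^{O(m)}$ bit operations. Here $m=3n$ is fixed, so this cost is a constant absorbed into the $\widetilde{O}$, and the HNF term survives as the dominant bit-size-dependent contribution. This resolves your concern (ii) directly and sidesteps any need for a canonical form in rank $3n$. Your concern (i) about uniform bit-size along neighbour chains is legitimate but routine once the lattices are reduced (they lie in a fixed genus), and the paper does not dwell on it.
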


\begin{proof}
The algorithm is specified in \cite[\S 6]{gv}; we focus on the ternary case.  We first enumerate the points of the projective conic $Q_{\Lambda}(x,y,z) \equiv 0 \pmod{\frakp}$ corresponding to isotropic lines, which can done in time $\widetilde{\mathcal{O}}(\Nm(\frakp))$ by looping over values of $x \in \F_\frakp$ and computing roots.  In a fixed number of operations we can compute a hyperbolic complement, compute generators for the neighbor, and then we compute a Hermite normal form over $\Z$ which by assumption has complexity $\mathcal{O}(H_{3n}(\|\Lambda\|))$.  Finally, we account for the isometry testing for the lattice projected to $\Z$ which occurs at a complexity of $\mathcal{O}((3n)^{O(3n)})$ by work of Haviv--Regev \cite{HR}: we treat $V$ as a $\Q$-vector space with additional bilinear forms.  
\end{proof}


\section{Implementation and examples} \label{sec:examples}

This algorithm has been implemented in Magma \cite{Magma} over a general totally real field (in arbitrary rank), as reported by Assaf--Fretwell--Ingalls--Logan--Secord--Voight \cite{defortho}.  


We also implemented an optimized version restricted to $F=\Q$ and $N$ squarefree, written in C++ and available online \cite{code}.  On a standard desktop we can compute all rational newforms with squarefree conductor $N < 15\,000$ and their Hecke eigenvalues for all $p < 1000$ in just 25 minutes!

\begin{exm} \label{exm:blang}
For level $N=1\,062\,347=11\cdot 13\cdot 17 \cdot 19 \cdot 23=N_-$ so $N_+=1$ and hence we directly get the space of newforms, using \Cref{alg:ternary} we find the ternary form
\[ Q(x,y,z)=x^2+187y^2+1467z^2-187xz \]
with $\#\Cl(\Lambda)=2016$.  

Given $Q$, we can compute the Hecke matrices $[T_2],[T_3],[T_5],[T_7]$ for \emph{all} radical (spinor) characters, giving all newforms, in 4 seconds on a standard desktop machine.  Then $1$ minute of linear algebra computing kernels with sparse matrices in Magma following methods in Cremona \cite[\S 2.7]{Cremona} gives that there are exactly $5$ elliptic curves with conductor $N$.  

The same computation with modular symbols Magma crashed after consuming all 24 GB of available memory! 
\end{exm}

The excellent performance in \Cref{exm:blang} suggests that this method would be an effective method to get a ``random'' isogeny class of elliptic curves of moderately large conductor.  


\appendix

\section{A functorial inverse to the even Clifford map} \label{AppendixA}

In \cref{sec:ternarycliff}, we saw that the even Clifford functor associates to a nondegenerate ternary quadratic module a quaternion $R$-order, giving a bijection from twisted isometry classes of lattices to isomorphism classes of orders and inducing a bijection on class sets.  In this appendix, we upgrade this to an equivalence of categories extending work of Voight \cite[Theorem B]{Voight:char}.

Throughout the appendix, let $R$ be a (commutative) domain with $F \colonequals \Frac R$.  (The argument generalize to an arbitrary base, but we keep this hypothesis in line with the paper and to simplify a few arguments.)  The even Clifford functor associates to a nondegenerate ternary quadratic module a quaternion $R$-order.  We show how to do the converse, furnishing a functorial inverse to the Clifford functor.  The construction is due to Voight \cite[\S 2]{Voight:char}, following Bhargava \cite{BhargavaQuartic} (who considered the case of commutative rings of rank $4$) and a footnote of Gross--Lucianovic \cite[Footnote 2]{GrossLucianovic}.

Let $\calO \subset B$ be an $R$-order in a quaternion algebra $B$ over $F$ which is projective as an $R$-module.

\begin{prop} \label{prop:xwedgey}
$\calO/R$ is projective of rank $3$ as an $R$-module, and there exists a unique quadratic map 
\[ \psi=\psi_\calO:\tbigwedge^2 (\calO/R) \to \tbigwedge^4\,\calO \]
with the property that
\begin{equation} \label{eqn:xwedgey}
\psi(x \wedge y) = 1 \wedge x \wedge y \wedge xy 
\end{equation}
for all $x,y \in \calO$.
\end{prop}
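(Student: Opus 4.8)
First I would establish that $\calO/R$ is projective of rank $3$. Since $\calO$ is projective of rank $4$ over $R$ and contains $R\cdot 1$ as a direct summand — indeed the reduced trace $\trd\colon\calO\to R$ splits the inclusion $R\hookrightarrow\calO$ after inverting $2$, but more robustly one uses that $R$ is a direct summand because $1\in\calO$ generates a rank-one free submodule with a complement (this is where the domain hypothesis, or local triviality, helps) — the quotient $\calO/R$ is projective of rank $3$, and hence $\tbigwedge^2(\calO/R)$ is projective (in fact invertible) and $\tbigwedge^4\calO$ is invertible.

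Next, for the existence and uniqueness of $\psi$: the key point is that the map on generators is forced, so uniqueness is immediate once existence is known, provided $\calO/R$ is spanned (as an $R$-module) by images of elements of $\calO$ — which it is, tautologically. For existence, I would work locally: since projectivity and the assertion are local on $\Spec R$, I may assume $\calO$ is free, say $\calO=R\oplus Rx\oplus Ry\oplus Rz$ with $\{\bar x,\bar y,\bar z\}$ a basis of $\calO/R$. Then $\tbigwedge^2(\calO/R)$ is free of rank $3$ on $\bar x\wedge\bar y$, $\bar y\wedge\bar z$, $\bar z\wedge\bar x$, and $\tbigwedge^4\calO$ is free of rank $1$ on $1\wedge x\wedge y\wedge z$. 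I would define $\psi$ by specifying its values on these basis wedges via \eqref{eqn:xwedgey} and checking it extends to a quadratic map; the associated symmetric bilinear form is then pinned down by the polarization identity $\psi(v+w)-\psi(v)-\psi(w)$, and one verifies directly that the formula $\psi(x\wedge y)=1\wedge x\wedge y\wedge xy$ is consistent — i.e., that it respects the relations in $\tbigwedge^2(\calO/R)$, namely alternation ($x\wedge x=0$, which forces $\psi(x\wedge x)=1\wedge x\wedge x\wedge x^2$, and this vanishes since the wedge has a repeated factor) and bilinearity of $\wedge$ in each slot modulo $R$.

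The heart of the verification is showing that the expression $1\wedge x\wedge y\wedge xy$ depends only on $x\wedge y\in\tbigwedge^2(\calO/R)$ and is quadratic in that argument. Two sub-checks: (a) replacing $x$ by $x+r$, $y$ by $y+s$ for $r,s\in R$ does not change $1\wedge x\wedge y\wedge xy$ — because $1\wedge(x+r)\wedge(y+s)\wedge((x+r)(y+s))$ expands, using multilinearity of the $4$-fold wedge and the fact that any term containing two copies of $1$ vanishes, to $1\wedge x\wedge y\wedge xy$ plus terms like $1\wedge x\wedge y\wedge(sx)$, $1\wedge x\wedge y\wedge(ry)$, etc., each of which has a repeated argument up to an $R$-scalar and hence vanishes; and (b) the map $(x,y)\mapsto 1\wedge x\wedge y\wedge xy$ is additive-quadratic: $1\wedge(x_1+x_2)\wedge y\wedge((x_1+x_2)y) = 1\wedge x_1\wedge y\wedge x_1y + 1\wedge x_2\wedge y\wedge x_2y + \bigl(1\wedge x_1\wedge y\wedge x_2 y + 1\wedge x_2\wedge y\wedge x_1 y\bigr)$, identifying the cross term as the value of the polar bilinear form, and similarly in $y$; one then checks these cross terms assemble into a genuine symmetric $R$-bilinear form on $\tbigwedge^2(\calO/R)$.

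\textbf{Main obstacle.} The genuinely delicate part is (b) — confirming that the "cross terms" give a well-defined \emph{symmetric bilinear} map on $\tbigwedge^2(\calO/R)$ rather than just on $(\calO/R)\times(\calO/R)$, and in particular that they too descend modulo $R$ and are compatible with the alternating structure. This is a bookkeeping argument with $4$-fold exterior products, repeatedly using that a wedge with a repeated slot (even up to an $R$-scalar, after reducing mod $R$) is zero; there are no conceptual difficulties but the combinatorics must be handled carefully, and gluing the local definitions into a global $\psi$ requires noting that the local formulas agree on overlaps, which is automatic since they are all characterized by the universal property \eqref{eqn:xwedgey}. I would then remark that one recovers the even Clifford quadratic form (up to the canonical twist by $\tbigwedge^4\calO$) and cite Voight \cite[\S 2]{Voight:char} for the fact that this $\psi$, suitably interpreted, is the inverse construction.
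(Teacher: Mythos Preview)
Your proposal is correct and essentially reconstructs the argument that the paper defers to: the paper's proof consists solely of the citation ``See Voight \cite[Lemma 2.1]{Voight:char},'' and your sketch --- local freeness via the primitivity of $1$ in $\calO/\frakm\calO$, then the multilinear bookkeeping to check that $(x,y)\mapsto 1\wedge x\wedge y\wedge xy$ descends to a quadratic map on $\tbigwedge^2(\calO/R)$ --- is the content of that lemma. One small sharpening: your aside about the reduced trace splitting $R\hookrightarrow\calO$ after inverting $2$ is unnecessary, since the argument you give next (that $1$ has nonzero image in the $4$-dimensional $k(\frakp)$-algebra $\calO/\frakp\calO$, hence is locally part of a basis) works uniformly and is the cleaner route.
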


\begin{proof}
See Voight \cite[Lemma 2.1]{Voight:char}.
\end{proof}

The quadratic module $\psi_\calO \colon \tbigwedge^2 (\calO/R) \to \tbigwedge^4\,\calO$ in \Cref{prop:xwedgey} is called the \defi{canonical exterior form} of $\calO$.

\begin{prop} \label{prop:extcanfunct}
The association $\calO \mapsto \psi_\calO$ yields a functor from the category of 
\begin{center}
\emph{projective quaternion orders over $R$, under isomorphisms}
\end{center}
to the category of
\begin{center}
\emph{ternary quadratic modules, under similarity}.
\end{center}
\end{prop}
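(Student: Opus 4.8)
The plan is to construct the required similarities purely formally, out of the functoriality of the three operations $\calO \mapsto \calO/R$, $\tbigwedge^2(-)$, and $\tbigwedge^4(-)$, and then to reduce the one substantive point --- compatibility with the quadratic maps --- to the uniqueness clause of \Cref{prop:xwedgey}. Concretely, given an isomorphism $\phi\colon \calO \xrightarrow{\sim} \calO'$ of projective quaternion $R$-orders, first I would observe that $\phi$, being an $R$-algebra isomorphism, fixes $R\cdot 1$ and hence descends to an $R$-module isomorphism $\overline{\phi}\colon \calO/R \xrightarrow{\sim} \calO'/R$; then I would set $f \colonequals \tbigwedge^2 \overline{\phi}$ and $g \colonequals \tbigwedge^4 \phi$, which are isomorphisms $\tbigwedge^2(\calO/R) \xrightarrow{\sim} \tbigwedge^2(\calO'/R)$ and $\tbigwedge^4\,\calO \xrightarrow{\sim} \tbigwedge^4\,\calO'$. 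The assertion to prove is that $(f,g)$ is a morphism of quadratic modules, i.e.\ $g \circ \psi_\calO = \psi_{\calO'} \circ f$; since these are quadratic modules valued in invertible $R$-modules, such a pair is exactly a similarity $\psi_\calO \to \psi_{\calO'}$, invertible because $f$ and $g$ are.

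For the compatibility I would not compute on a general element of $\tbigwedge^2(\calO/R)$ --- not every element is a pure wedge $\overline{x}\wedge\overline{y}$, so that route is awkward --- but instead argue by uniqueness. Set $\psi' \colonequals g^{-1} \circ \psi_{\calO'} \circ f$; this is again a quadratic map $\tbigwedge^2(\calO/R) \to \tbigwedge^4\,\calO$, being a quadratic map composed with linear maps. The computation I would then run is: for $x,y \in \calO$, using that $\phi$ is multiplicative with $\phi(1)=1$ and that $g = \tbigwedge^4\phi$,
\[
\psi'(x \wedge y) = g^{-1}\bigl(\psi_{\calO'}(\phi(x) \wedge \phi(y))\bigr) = g^{-1}\bigl(1 \wedge \phi(x) \wedge \phi(y) \wedge \phi(xy)\bigr) = 1 \wedge x \wedge y \wedge xy .
\]
Thus $\psi'$ satisfies the defining property \eqref{eqn:xwedgey} of $\psi_\calO$, so \Cref{prop:xwedgey} forces $\psi' = \psi_\calO$, which is exactly $g \circ \psi_\calO = \psi_{\calO'} \circ f$.

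It then remains to dispatch the formalities. The target is genuinely a ternary quadratic module: $\tbigwedge^2(\calO/R)$ is projective of rank $\binom{3}{2}=3$ and $\tbigwedge^4\,\calO$ is projective of rank $1$ (both by \Cref{prop:xwedgey}, together with $\calO$ being projective of rank $4$), and the nondegeneracy of $\psi_\calO$, if it is required in the definition, I would cite from Voight \cite[\S 2]{Voight:char} or obtain by localizing. Functoriality is immediate: $\phi = \id_\calO$ gives $f = g = \id$, and since $\overline{\phi' \circ \phi} = \overline{\phi'} \circ \overline{\phi}$, the identities $\tbigwedge^2\overline{\phi' \circ \phi} = \tbigwedge^2\overline{\phi'} \circ \tbigwedge^2\overline{\phi}$ and $\tbigwedge^4(\phi' \circ \phi) = \tbigwedge^4\phi' \circ \tbigwedge^4\phi$ show composites map to composites. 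The only genuine content is the quadratic-form compatibility above, and the main --- though minor --- obstacle is organizing that argument via uniqueness rather than by an ill-fated attempt to evaluate $\psi'$ on non-pure-wedge elements.
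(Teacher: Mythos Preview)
Your proposal is correct and essentially the same as the paper's proof: both define the induced maps $\tbigwedge^2\overline{\phi}$ and $\tbigwedge^4\phi$ and verify the commuting square via the computation $1\wedge\phi(x)\wedge\phi(y)\wedge\phi(xy)=1\wedge\phi(x)\wedge\phi(y)\wedge\phi(x)\phi(y)$ on pure wedges. The only difference is that you make explicit the appeal to the uniqueness clause of \Cref{prop:xwedgey} to pass from pure wedges to all elements, whereas the paper leaves this implicit; your version is slightly more careful on that point, and you also spell out the identity and composition checks that the paper omits.
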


\begin{proof}
An isomorphism $\phi \colon \calO \to \calO'$ of quaternion $R$-orders induces a similarity
\[
\xymatrix{
\tbigwedge^2(\calO/R) \ar[r]^(.55){\psi} \ar[d]^{\wedge^2 \phi}_{\wr} & \tbigwedge^4\,\calO \ar[d]^{\wedge^4 \phi}_{\wr} \\
\tbigwedge^2(\calO'/R) \ar[r]^(.55){\psi'} & \tbigwedge^4\,\calO'
} \]
because for all $x,y \in \calO$ we have
\begin{equation} \label{eqn:wedge4makesitgo}
\begin{aligned}
(\wedge^4 \phi)(\psi(x \wedge y)) &= 1 \wedge \phi(x) \wedge \phi(y) \wedge \phi(xy) \\
&= 1 \wedge \phi(x) \wedge \phi(y) \wedge \phi(x)\phi(y) \\
&= \psi'(\phi(x) \wedge \phi(y)) = \psi'((\wedge^2 \phi)(x \wedge y))
\end{aligned}
\end{equation}
as desired.
\end{proof}

\begin{exm} \label{p:canonextfree}
Suppose that $\calO$ is free with a good basis $1,i,j,k$ \cite[22.4.7]{Voight:quatbook} and multiplication laws
\begin{equation} \label{eqn:Qeqns} 
\begin{aligned}
i^2 &= ui-bc & jk &= a\overline{i} \\ 
j^2 &= vj-ac &  ki &= b\overline{j} \\
k^2 &= wk-ab \quad & ij &= c\overline{k}.
\end{aligned}
\end{equation}
We now compute the canonical exterior form \cite[Example 2.5]{Voight:char}
\[ \psi=\psi_\calO:\tbigwedge^2(\calO/R) \to \tbigwedge^4\,\calO. \]
We choose bases, with
\[ \tbigwedge^2(\calO/R) \xrightarrow{\sim} R(j \wedge k) \oplus R(k \wedge i) \oplus R(i \wedge j) = R e_1 \oplus Re_2 \oplus Re_3 \]
and the generator $-1\wedge i \wedge j \wedge k$ for $\tbigwedge^4\,\calO$.  

With these identifications, the canonical exterior form $\psi \colon R^3 \to R$ has
\[ \psi(e_1)=\psi(j \wedge k)=1 \wedge j \wedge k \wedge jk = 1 \wedge j \wedge k \wedge (-ai)=a(-1\wedge i \wedge j \wedge k) \]
and 
\begin{align*} 
\psi(e_1+e_2)-\psi(e_1)-\psi(e_2) &= \psi(k\wedge (i-j)) - \psi(j \wedge k) - \psi(k \wedge i) \\
&= -1 \wedge k \wedge j \wedge ki - 1 \wedge k \wedge i \wedge kj \\
&= -w(1 \wedge k \wedge i \wedge j)=w(-1\wedge i \wedge j \wedge k).
\end{align*}
Continuing in this way, we see that 
\[ \psi(x(j \wedge k) + y(k \wedge i) + z(i \wedge j))=Q(xe_1+ye_2+ze_3)=Q(x,y,z) \]
with 
\begin{equation}  \label{eqn:Qabcuvwagain}
Q(x,y,z)=ax^2+by^2+cz^2+uyz+vxz+wxy, 
\end{equation}
so that 
\begin{equation}
\Clf^0(\psi_\calO) \simeq \calO.
\end{equation}

Therefore $\psi$ furnishes an inverse to the map in \Cref{thm:functorial} when $R$ is a PID.  In particular, we have obtained the \emph{same} quadratic form as constructed from the reduced norm \cite[Proposition 22.4.12]{Voight:quatbook}, so
\begin{equation} \label{eqn:nrdosharppsi}
N\nrd(\calO^\sharp) \sim \psi_{\calO}.
\end{equation}
\end{exm}

\begin{exm}
Suppose that $R$ is a Dedekind domain with field of fractions $F$.  Then there is a good pseudobasis for $\calO$
\begin{equation} \label{eqn:goodpseudo}
\calO = R \oplus \fraka i \oplus \frakb j \oplus \frakc k.
\end{equation}

The canonical exterior form of $\calO$, by the same argument as in \Cref{p:canonextfree} but keeping track of scalars, is given by
\[ \psi_\calO \colon \frakb\frakc e_1 \oplus \fraka\frakc e_2 \oplus \fraka\frakb e_3 \to \fraka\frakb\frakc \]
under the identification 
\begin{equation}
\begin{aligned}
\tbigwedge^4\,\calO \xrightarrow{\sim} \fraka\frakb\frakc \\
1 \wedge i \wedge j \wedge k \mapsto -1;
\end{aligned}
\end{equation} 
we again have 
\[ \psi_\calO(xe_1+ye_2+ze_3)=ax^2+by^2+cz^2+uyz+vxz+wxy \]
as in \eqref{eqn:Qabcuvwagain}, but now with $x,y,z$ are restricted to their respective coefficient ideals.  
\end{exm}

The even Clifford algebra and the canonical exterior form carry with them one global property (Steinitz class) that must be taken into account before we obtain an equivalence of categories.  Briefly, in addition to similarities one must also take into account twisted similarities, obtained not by a global map but by twisting by an invertible module.  

\begin{defn} \label{def:twistingdef}
A quadratic module $d\colon P \to I$ with $P,I$ projective of rank $1$ is called a \defi{twisting} quadratic module if the associated bilinear map $P \otimes P \to I$ is an $R$-module isomorphism.
\end{defn}

\begin{exm}
The quadratic module $d\colon R \to R$ by $z \mapsto z^2$ is a (trivial) twisting.

If $P$ is an invertible $R$-module, then the quadratic module
\begin{equation}
\begin{aligned}
P &\to P^{\otimes 2} \\
z &\mapsto z \otimes z
\end{aligned}
\end{equation}
is twisting.  
\end{exm}

\begin{defn}
Let $Q\colon M \to L$ be a quadratic module and let $d\colon P \to I$ be a twisting quadratic module.  The \defi{twist} of $Q$ by $d$ is the quadratic module
\begin{align*}
Q \otimes d \colon M \otimes P &\to L \otimes I \\
x \otimes z &\mapsto Q(x) \otimes d(z).
\end{align*}

A \defi{twisted similarity} between quadratic modules $Q\colon M \to L$ and $Q' \colon M' \to L'$ is tuple $(f,h,d)$ where $d\colon P \to I$ is a twisting quadratic module and $(f,h)$ is a similarity between $Q \otimes d$ and $Q'$.  
\end{defn}

\begin{exm} \label{exm:twistinvideal}
Let $Q\colon M \to L$ be a quadratic module and let $\fraka \subseteq R$ be an invertible fractional ideal of $R$.  Then $d\colon \fraka \to \fraka^2$ is twisting, and the twist of $Q$ by $\fraka$ can be identified with
\begin{align*}
Q \otimes d \colon \fraka M &\to \fraka^2 L \\
zx &\mapsto z^2 Q(x).
\end{align*}
If $\fraka=aR$ is principal, then $Q$ is similar to $Q \otimes d$ via the similarity $(f,h)$ obtained by scaling by $a$.  However, if $\fraka$ is not principal, then $Q$ may not be similar to $Q \otimes d$.
\end{exm}

\begin{lem}
Let $Q\colon M \to L$ be a quadratic module and let $d\colon P \to I$ be twisting.  Then there is a canonical isomorphism of $R$-algebras 
\[ \Clf^0(Q) \xrightarrow{\sim} \Clf^0(Q \otimes d). \]
\end{lem}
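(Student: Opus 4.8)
The plan is to prove this directly from the explicit construction of the even Clifford algebra, using the canonical isomorphism carried by the twisting datum; one can equivalently argue through the universal property of $\Clf^0$, but the explicit version makes the bookkeeping visible. First I would record what the hypothesis on $d$ buys us. A twisting quadratic module $d\colon P\to I$ (\Cref{def:twistingdef}) has an associated bilinear map $\beta\colon P\otimes_R P\to I$ which is an isomorphism and which restricts to $d$ on the diagonal, $\beta(z\otimes z)=d(z)$; this last property is exactly what distinguishes a twisting datum from an arbitrary unimodular pairing. For each $n\geq 0$, applying $\beta$ to the $n$ consecutive pairs of factors in $P^{\otimes 2n}$ and then contracting the resulting $I^{\otimes n}$ against $(I^{\vee})^{\otimes n}$ produces a canonical isomorphism
\[ P^{\otimes 2n}\otimes_R (I^{\vee})^{\otimes n}\xrightarrow{\ \sim\ } R. \]

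Next I would compare graded presentations. Recall that $\Clf^0(Q)$ is obtained as a quotient of $\bigoplus_{n\geq 0} M^{\otimes 2n}\otimes_R (L^{\vee})^{\otimes n}$ (with $L$ the value module, invertible in the cases of interest; multiplication by concatenation of tensors, $L\otimes_R L^{\vee}$ contracted to $R$) by the two-sided ideal generated by the Clifford relations for $Q$ — the ones replacing $x\otimes x$ by its value $Q(x)\in L$ and hence $x\otimes y+y\otimes x$ by $T(x,y)$. The degree-$n$ piece of the analogous presentation of $\Clf^0(Q\otimes d)$ is $(M\otimes_R P)^{\otimes 2n}\otimes_R\bigl((L\otimes_R I)^{\vee}\bigr)^{\otimes n}$; regrouping tensor factors rewrites this as $\bigl(M^{\otimes 2n}\otimes_R(L^{\vee})^{\otimes n}\bigr)\otimes_R\bigl(P^{\otimes 2n}\otimes_R(I^{\vee})^{\otimes n}\bigr)$, and composing with the isomorphism above yields a graded $R$-module isomorphism $\Psi$ between the two presenting modules.

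It then remains to check that $\Psi$ is multiplicative and respects the defining ideals, which I expect to be routine. Concatenation of tensors regroups the $M$-factors and the $P$-factors separately, and $\beta$ operates on consecutive pairs, so $\Psi$ commutes with the pre-quotient products and is a map of graded $R$-algebras. For the relations, the Clifford relation for $Q\otimes d$ replaces $(x\otimes z)\otimes(x\otimes z)$ by $(Q\otimes d)(x\otimes z)=Q(x)\otimes d(z)=Q(x)\otimes\beta(z\otimes z)$; under $\Psi$ the factor $\beta(z\otimes z)$ is absorbed into the $I$--$I^{\vee}$ contraction and what remains is exactly the Clifford relation for $Q$, and similarly for the polar relation. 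Hence $\Psi$ carries the defining ideal of $\Clf^0(Q\otimes d)$ onto that of $\Clf^0(Q)$ and induces an isomorphism of $R$-algebras, canonical because $\beta$, the regroupings, and the contractions are. The one point requiring care — the main, if modest, obstacle — is that the relation being transported is quadratic rather than merely bilinear, so one genuinely uses $\beta(z\otimes z)=d(z)$ and not just unimodularity of $\beta$; a useful sanity check is \Cref{p:canonextfree}, where in a good (pseudo)basis the twist multiplies the coefficients of $Q$ by the ``norm'' of the twisting generator, and the classical fact $\Clf^0(aQ)\simeq\Clf^0(Q)$ recovers the claim locally.
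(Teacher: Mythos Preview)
Your proposal is correct and follows essentially the same route as the paper's proof: both identify the graded tensor algebra $\Ten^0(M\otimes P;L\otimes I)$ with $\Ten^0(M;L)$ via the canonical contraction $P^{\otimes 2}\otimes I^{\vee}\simeq R$ coming from the twisting datum, and then check that the Clifford ideals correspond. The paper states this in three sentences (regroup, use $d\colon P\otimes P\to I$ and evaluation, observe $I^0(Q\otimes d)\xrightarrow{\sim} I^0(Q)$), whereas you spell out the multiplicativity and the transport of the quadratic relation explicitly; your remark that one genuinely needs $\beta(z\otimes z)=d(z)$ and not merely unimodularity is a nice point the paper leaves implicit.
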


\begin{proof}
First, we have a canonical isomorphism
\begin{equation} \label{eqn:MMLP}
(M \otimes P) \otimes (M \otimes P) \otimes (L \otimes I)\spcheck \xrightarrow{\sim} M \otimes M \otimes L\spcheck
\end{equation}
coming from rearranging, the canonical map $d\colon P \otimes P \to I$ followed by the evaluation map $I \otimes I\spcheck \xrightarrow{\sim} R$.  Now recall the definition of the even Clifford algebra $\Clf^0(Q)$ \cite[22.2.1]{Voight:quatbook}: 
\[ \Clf^0(Q)=\Ten^0(M;L)/I^0(Q). \]
The canonical isomorphism \eqref{eqn:MMLP} induces an isomorphism $\Ten^0(M \otimes P;L \otimes I)$ that maps $I^0(Q \otimes d) \xrightarrow{\sim} I^0(Q)$, and the result follows.
\end{proof}

\begin{thm} \label{mthm:bigthmcliffisom}
Let $R$ be a noetherian domain.  Then the associations
\begin{equation} \label{eqn:mapsofsetscliffisom}
\begin{aligned}
\left\{ \begin{minipage}{28ex} 
\begin{center}
\textup{Nondegenerate ternary quadratic modules over $R$ \\ up to twisted similarity}
\end{center} 
\end{minipage}
\right\} &\leftrightarrow \left\{ \begin{minipage}{30ex} 
\begin{center}
\textup{Projective quaternion orders over $R$ up to isomorphism}
\end{center} 
\end{minipage}
\right\} \\
Q &\mapsto \Clf^0(Q) \\
\psi_\calO &\mapsfrom \calO
\end{aligned}
\end{equation}
are mutually inverse, discriminant-preserving bijections that are also functorial with respect to $R$.
\end{thm}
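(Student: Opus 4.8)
The plan is to check that both associations descend to well-defined maps between the indicated sets of isomorphism/twisted-similarity classes, that they are discriminant-preserving and functorial in $R$, and then that the two round-trip composites are naturally isomorphic to the respective identities; the one genuinely delicate point will be a global Steinitz-class obstruction in a single direction.

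First I would address well-definedness. A similarity of quadratic modules induces an isomorphism of even Clifford algebras, and the lemma preceding the theorem provides a canonical isomorphism $\Clf^0(Q) \xrightarrow{\sim} \Clf^0(Q \otimes d)$ for any twisting $d$; hence $Q \mapsto \Clf^0(Q)$ descends to a map from twisted-similarity classes of nondegenerate ternary quadratic modules to isomorphism classes of projective quaternion orders. (Here one must allow the codomain of $Q$ to be an arbitrary invertible module, not merely the submodule spanned by the values; with the latter normalization one recovers only the Gorenstein orders, as in \Cref{thm:functorial}.) Conversely, by \Cref{prop:extcanfunct} an isomorphism of projective orders induces a similarity of canonical exterior forms, so $\calO \mapsto \psi_\calO$ descends to a map in the reverse direction. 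Both are compatible with base change $R \to R'$: for the even Clifford functor this is part of \Cref{thm:functorial}, and for $\psi$ it is immediate from the defining identity \eqref{eqn:xwedgey}, which is preserved by $- \otimes_R R'$, giving $\psi_{\calO \otimes_R R'} \simeq \psi_\calO \otimes_R R'$.

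Next I would treat the composite $\calO \mapsto \Clf^0(\psi_\calO)$ together with discriminant preservation. Over any localization $R_\frakp$ every finitely generated projective module is free, so $\calO_\frakp$ admits a good basis \cite[22.4.7]{Voight:quatbook}; the computation in \Cref{p:canonextfree} then recovers the structure constants \eqref{eqn:Qeqns} from the form \eqref{eqn:Qabcuvwagain} and furnishes a canonical $R_\frakp$-algebra isomorphism $\Clf^0(\psi_{\calO_\frakp}) \xrightarrow{\sim} \calO_\frakp$, arising from the universal property of $\Clf^0$. Since $\psi$ and $\Clf^0$ commute with localization and this isomorphism is the canonical one, the local isomorphisms patch to a global isomorphism $\Clf^0(\psi_\calO) \xrightarrow{\sim} \calO$, natural in $\calO$. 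The same local computation shows that on a good basis the discriminant of $Q$ in \eqref{eqn:Qabcuvwagain} equals $\discrd \calO$; being a base-change-stable identity, discriminant preservation then follows in general.

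The hard part will be the composite $Q \mapsto \psi_{\Clf^0(Q)}$. Running \Cref{p:canonextfree} in the other direction shows $\psi_{\Clf^0(Q)} = Q$ whenever $\Clf^0(Q)$ has a good basis, hence $\psi_{\Clf^0(Q_\frakp)} \simeq Q_\frakp$ for every prime $\frakp$; but a \emph{global} similarity need not exist, because $\psi_{\Clf^0(Q)}$ naturally takes values in the invertible module $\tbigwedge^4 \Clf^0(Q)$, which differs from the codomain of $Q$ by the Steinitz class of $\Clf^0(Q)/R$. The plan is to package exactly this discrepancy as a twisting quadratic module $d \colon P \to I$ in the sense of \Cref{def:twistingdef} (twisting by the invertible module measuring the failure of $\tbigwedge^4 \Clf^0(Q)$ to be canonically identified with the intended codomain, as in \Cref{exm:twistinvideal}) and to verify that with this choice $\psi_{\Clf^0(Q)} \simeq Q \otimes d$, i.e.\ that $\psi_{\Clf^0(Q)}$ and $Q$ are twisted-similar; this is precisely the twisted-similarity ambiguity already present in \Cref{thm:evencliffbij} and \eqref{eqn:nrdosharppsi}. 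Combining this with the previous step shows the associations in \eqref{eqn:mapsofsetscliffisom} are mutually inverse, discriminant-preserving bijections that are functorial in $R$, completing the proof.
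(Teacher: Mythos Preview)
Your overall strategy matches the paper's, and you correctly locate the crux in the composite $Q \mapsto \psi_{\Clf^0(Q)}$ and the Steinitz obstruction. But the step you label ``the plan'' is where the real content lies, and your sketch does not carry it out. You propose to package the discrepancy between $\tbigwedge^4\Clf^0(Q)$ and the codomain $L$ of $Q$ as a twist; however, a twisted similarity requires a \emph{single} invertible module $P$ such that simultaneously $\tbigwedge^2(\calO/R) \simeq M \otimes P$ on the domain side and $\tbigwedge^4\calO \simeq L \otimes P^{\otimes 2}$ on the codomain side. Knowing only that $\psi_{\Clf^0(Q)}$ and $Q$ agree after every localization does not produce such a $P$, nor does it show that one $P$ handles both identifications at once; your description addresses only the codomain.

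The paper supplies exactly this via the canonical identification $\calO/R \simeq \tbigwedge^2 M \otimes L\spcheck$ (from the construction of $\Clf^0$) together with the exterior-algebra identities of \Cref{lem:wedgewedge}, namely $\tbigwedge^2(\tbigwedge^2 M) \simeq M \otimes \tbigwedge^3 M$ and $\tbigwedge^3(\tbigwedge^2 M) \simeq (\tbigwedge^3 M)^{\otimes 2}$. These give $\tbigwedge^2(\calO/R)\simeq M\otimes P$ and $\tbigwedge^4\calO\simeq P^{\otimes 2}\otimes L$ for the explicit module $P = \tbigwedge^3 M \otimes (L\spcheck)^{\otimes 2}$, so twisting $\psi_\calO$ by $P\spcheck$ yields a form with domain and codomain canonically $M$ and $L$; agreement with $Q$ is then checked over $F$ via the good-basis computation, just as you suggest. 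Your separate patching argument for $\Clf^0(\psi_\calO) \simeq \calO$ is fine in outline, but the canonicity you invoke (so that local good-basis isomorphisms glue) is again most transparently seen through these same wedge identities; the paper in fact verifies only the one composite and deduces bijectivity from it.
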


Before we begin the proof of the theorem, we need one preliminary lemma.  

\begin{lem} \label{lem:wedgewedge}
Let $M$ be a projective $R$-module of rank $3$.  Then there are canonical isomorphisms
\begin{align}
\tbigwedge^3 \bigl(\tbigwedge^2 M\bigr) &\xrightarrow{\sim} \bigl(\tbigwedge^3 M\bigr)^{\otimes 2} \label{eqn:wedgewedge1} \\
\tbigwedge^2 \bigl(\tbigwedge^2 M\bigr) &\xrightarrow{\sim} M \otimes \tbigwedge^3 M. \label{eqn:wedgewedge2}
\end{align}
\end{lem}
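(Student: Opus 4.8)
The plan is to construct both isomorphisms canonically from the exterior-product pairings attached to $M$, and to verify that each constituent map is an isomorphism by localizing at the primes of $R$, where a finitely generated projective module becomes free and the claim is a short computation on a basis. Write $\delta \colonequals \tbigwedge^3 M$, an invertible $R$-module. First I would record two standard facts. (a) For a projective $R$-module $N$ of rank $3$, the wedge pairing $\tbigwedge^2 N \otimes N \to \tbigwedge^3 N$, $\omega \otimes n \mapsto \omega \wedge n$, is perfect: it induces a canonical isomorphism $\tbigwedge^2 N \xrightarrow{\sim} \Hom_R\bigl(N,\tbigwedge^3 N\bigr) \cong N\spcheck \otimes \tbigwedge^3 N$, since on a local basis $e_1,e_2,e_3$ of $N$ it sends $e_2 \wedge e_3 \mapsto e_1\spcheck \otimes (e_1 \wedge e_2 \wedge e_3)$ and cyclically. (b) For an invertible $R$-module $L$, a projective $R$-module $N$, and $k \geq 0$, there is a canonical isomorphism $\tbigwedge^k(N \otimes L) \xrightarrow{\sim} \bigl(\tbigwedge^k N\bigr) \otimes L^{\otimes k}$, determined by $(x_1 \otimes \ell_1) \wedge \cdots \wedge (x_k \otimes \ell_k) \mapsto (x_1 \wedge \cdots \wedge x_k) \otimes (\ell_1 \otimes \cdots \otimes \ell_k)$ and again an isomorphism because it is one locally (note that the swap on $L \otimes L$ is the identity, so the assignment is well defined).

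Applying (a) with $N = M$ gives $\tbigwedge^2 M \cong M\spcheck \otimes \delta$, and applying (a) with $N = M\spcheck$---using the canonical identifications $(M\spcheck)\spcheck \cong M$ and $\tbigwedge^3(M\spcheck) \cong \delta\spcheck$---gives $\tbigwedge^2(M\spcheck) \cong M \otimes \delta\spcheck$. Then for \eqref{eqn:wedgewedge2},
\begin{align*}
\tbigwedge^2\bigl(\tbigwedge^2 M\bigr)
&\cong \tbigwedge^2\bigl(M\spcheck \otimes \delta\bigr)
\cong \tbigwedge^2\bigl(M\spcheck\bigr) \otimes \delta^{\otimes 2} \\
&\cong \bigl(M \otimes \delta\spcheck\bigr) \otimes \delta^{\otimes 2}
\cong M \otimes \delta,
\end{align*}
the last step being the evaluation isomorphism $\delta\spcheck \otimes \delta \cong R$; and for \eqref{eqn:wedgewedge1},
\[ \tbigwedge^3\bigl(\tbigwedge^2 M\bigr) \cong \tbigwedge^3\bigl(M\spcheck \otimes \delta\bigr) \cong \tbigwedge^3\bigl(M\spcheck\bigr) \otimes \delta^{\otimes 3} \cong \delta\spcheck \otimes \delta^{\otimes 3} \cong \delta^{\otimes 2}. \]
Every arrow is canonical and compatible with base change along $R \to R'$, so the composites furnish the asserted canonical isomorphisms.

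Once (a) and (b) are in place the argument is entirely formal, so the only real work is bookkeeping the natural identifications---most delicately the sign conventions in the wedge pairing---together with the localization check behind (a), namely that $\tbigwedge^2 N \otimes N \to \tbigwedge^3 N$ is perfect in rank $3$, which reduces immediately to the free case. An equally short alternative route is to pick a local basis $e_1,e_2,e_3$ of $M$ and verify directly that the assignments $(e_2 \wedge e_3) \wedge (e_3 \wedge e_1) \wedge (e_1 \wedge e_2) \mapsto (e_1 \wedge e_2 \wedge e_3)^{\otimes 2}$ and the analogous one for $\tbigwedge^2$ are equivariant under the change-of-basis action of $\GL_3$---using $\det\bigl(\tbigwedge^2 g\bigr) = (\det g)^2$ for $g \in \GL_3$---hence glue to the global isomorphisms.
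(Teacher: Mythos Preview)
Your argument is correct. The two standard inputs you isolate are exactly right: the perfect wedge pairing in rank~$3$ giving $\tbigwedge^2 N \cong N\spcheck \otimes \tbigwedge^3 N$, and the compatibility $\tbigwedge^k(N \otimes L) \cong (\tbigwedge^k N) \otimes L^{\otimes k}$ for $L$ invertible. From these the two identities fall out formally, and canonicity is transparent because every step is a composition of natural maps.

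The paper proceeds differently. Rather than passing through duality, it cites \cite[Lemma~3.7]{Voight:char} and illustrates by taking $M$ completely decomposable, $M=\fraka e_1 \oplus \frakb e_2 \oplus \frakc e_3$, then computing both sides by hand---e.g.\ $\tbigwedge^3(\tbigwedge^2 M) \simeq (\fraka\frakb\frakc)^2\,(e_1\wedge e_2)\wedge(e_1\wedge e_3)\wedge(e_2\wedge e_3)$ and matching this with $(\tbigwedge^3 M)^{\otimes 2}$ ``by rearranging the tensor in a canonical way''---and then appealing to the splitting principle for the general case. This is essentially the ``alternative route'' you sketch in your last paragraph. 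What your duality approach buys is that canonicity and base-change compatibility are automatic from the construction, with no need to invoke the splitting principle or track $\GL_3$-equivariance by hand; what the paper's approach buys is an explicit formula on a basis, which is convenient for the concrete computations with good (pseudo)bases that follow in the appendix.
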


\begin{proof}
The proof is a bit of fun with multilinear algebra \cite[Lemma 3.7]{Voight:char}.  To illustrate, we give the proof in the special case where $M$ is completely decomposable $M=\fraka e_1 \oplus \frakb e_2 \oplus \frakc e_3$---so in particular the result holds when $R$ is a Dedekind domain (and, more generally using the splitting principle).  In this case, we have 
\[ \tbigwedge^2 M \simeq \fraka\frakb(e_1 \wedge e_2) \oplus \fraka\frakc(e_1 \wedge e_3) \oplus \frakb\frakc (e_2\wedge e_3) \]
and so
\[ \tbigwedge^3 \bigl(\tbigwedge^2 M\bigr) \simeq (\fraka\frakb\frakc)^2 (e_1 \wedge e_2) \wedge (e_1 \wedge e_3) \wedge (e_2 \wedge e_3) \]
agreeing with
\[ \bigl(\tbigwedge^3 M\bigr)^{\otimes 2} \simeq (\fraka\frakb\frakc)^2 (e_1 \wedge e_2 \wedge e_3)^{\otimes 2} \]
by rearranging the tensor in a canonical way.  The second isomorphism follows similarly.
\end{proof}


\begin{proof}[Proof of \Cref{mthm:bigthmcliffisom}]
We have two functors, by \Cref{thm:functorial} and \Cref{prop:extcanfunct}, that are functorial with respect to the base ring $R$.  

We now compose them.  Let $Q\colon M \to L$ be a quadratic module with $\calO=\Clf^0(Q)$, and consider its canonical exterior form $\psi \colon \tbigwedge^2(\calO/R) \to \tbigwedge^4\,\calO$.  As $R$-modules, we have canonically 
\begin{equation} \label{eqn:canori}
\calO/R \simeq \tbigwedge^2 M \otimes L\spcheck. 
\end{equation}
By the isomorphism \eqref{eqn:wedgewedge2}, we have as the domain of $\psi$ the $R$-module 
\begin{equation} \label{eqn:domain1}
\begin{aligned} 
\tbigwedge^2 (\calO/R) \simeq \tbigwedge^2(\tbigwedge^2 M \otimes L\spcheck) \simeq \tbigwedge^2(\tbigwedge^2 M) \otimes (L\spcheck)^{\otimes 2} \\
\simeq M \otimes \tbigwedge^3 M \otimes (L\spcheck)^{\otimes 2}
\end{aligned}
\end{equation}
and as codomain we have by the canonical $R$-module isomorphism 
\[ \tbigwedge^3(\calO/R) \xrightarrow{\sim} \tbigwedge^4\,\calO \]
and the isomorphism \eqref{eqn:wedgewedge1}
\begin{equation} \label{eqn:codomain2}
\begin{aligned}
\tbigwedge^4\,\calO \simeq \tbigwedge^3 (\calO/R) &\simeq \tbigwedge^3 (\tbigwedge^2 M \otimes L\spcheck) \simeq \tbigwedge^3 (\tbigwedge^2 M) \otimes (L\spcheck)^{\otimes 3} \\
&\simeq
(\tbigwedge^3 M)^{\otimes 2} \otimes (L\spcheck)^{\otimes 3}.
\end{aligned}
\end{equation}

Now we twist.  Let 
\[ P \colonequals \tbigwedge^3 M \otimes (L\spcheck)^{\otimes 2} \] 
and let $d\colon P\spcheck \to (P\spcheck)^{\otimes 2}$ be the natural twisting quadratic module.  Then the twist $\psi \otimes d$ has domain and codomain canonically isomorphic to
\begin{equation} \label{eqn:tbigwedgealign}
\begin{aligned} 
\tbigwedge^2(\calO/R) \otimes P\spcheck &\simeq M \\
\tbigwedge^4\,\calO \otimes (P\spcheck)^{\otimes 2} &\simeq L 
\end{aligned}
\end{equation}
by \eqref{eqn:domain1}--\eqref{eqn:codomain2} so we have a quadratic form $\psi_{\Clf^0(Q)} \otimes d\colon M \to L$.  

We show that the composition $Q \mapsto \Clf^0(Q)=\calO \mapsto \psi_\calO$ is naturally isomorphic to the identity, via the twist $d$.  But to do this (and show the induced maps are similarities), since the above construction is canonical, we can base change to $F$ and check within the quadratic space $Q_F \colon V \to F$ where $V \colonequals M \otimes_R F$.  Choosing a basis, we find that the composition is the identity by \Cref{p:canonextfree}.

We may then conclude that the map of sets in \eqref{eqn:mapsofsetscliffisom} is a well-defined bijection: functoriality shows that the map is well-defined and that both maps are injective, and the composition shows that it is surjective.
\end{proof}



We now shift to isometries and upgrade the bijection to an equivalence of categories; this has \Cref{cor:bij} as an immediate corollary.

\begin{rmk}
Over a field, we used orientations \cite[\S 4.5]{Voight:quatbook} to get an equivalence of categories: the crux of the problem being that the isometry $-1:V \to V$ maps to the identity map on $\Clf^0 V$ under the Clifford functor.  However, $-1$ acts nontrivially on the \emph{odd} Clifford module $\Clf^1 V$, and the point of an orientation is to keep track of this action on piece of $\Clf^1 V$ coming from the center.  Restricting an orientation of $V$ would also work here; but we add a different and more functorial structure on quaternion orders to allow for these extra morphisms.
\end{rmk}


\begin{defn}
Let $N$ be an invertible $R$-module.  A \defi{parity factorization} of $N$ is a pair of invertible $R$-modules $P,L$ and an isomorphism $p:N \xrightarrow{\sim} P^{\otimes 2} \otimes L$.  If $N$ has a parity factorization, we call it \defi{paritized}.

An \defi{isomorphism}\index{parity factorization!isomorphism} $(N,p)$ to $(N',p')$ of paritized  invertible $R$-modules is a pair of isomorphisms $N \simeq N'$, $P \simeq P'$ such that the diagram
\begin{equation}
\begin{gathered}
\xymatrix{
N \ar[r]^(0.40){p} \ar[d]^{\wr} & P^{\otimes 2} \otimes L \ar[d]^{\wr} \\
N' \ar[r]^(0.37){p'} & P'^{\otimes 2} \otimes L
} 
\end{gathered}
\end{equation}
commutes.  
\end{defn}

We \emph{do} want to fix the module $L$ in order to work with isometries; this serves as an anchor for our construction.

\begin{defn}
Let $\calO$ be a quaternion $R$-order.  Then $\calO$ is \defi{paritized} if $\calO$ is equipped with a parity factorization of $\tbigwedge^4\,\calO$.  An isomorphism of paritized quaternion orders is an isomorphism $\phi \colon \calO \simeq \calO'$ and an isomorphism of parity factorizations with the isomorphism $\tbigwedge^4\,\calO \simeq \tbigwedge^4\,\calO'$ given by $\wedge^4 \phi$.  
\end{defn}

\begin{exm}
Every invertible $R$-module $N$ has the identity parity factorization, with $P=R$ and $L=N$; up to isomorphism, every other differs by a choice of isomorphism class of $P$.  So parity factorizations can be thought of as factorizations according to parity inside $\Pic R$.
\end{exm}

\begin{exm}
The main example of a parity factorization comes from the even Clifford construction.  Let $Q\colon M \to L$ be a nondegenerate ternary quadratic module.  Let $\calO = \Clf^0(Q)$, and let $P=\tbigwedge^3 M \otimes (L\spcheck)^{\otimes 2}$.  Then by \eqref{eqn:codomain2}, we have a canonical parity factorization
\begin{equation} \label{eqn:pQDef} 
p_Q\colon \tbigwedge^4 \calO \simeq (\tbigwedge^3 M)^{\otimes 2} \otimes (L\spcheck)^{\otimes 3} \simeq P^{\otimes 2} \otimes L. 
\end{equation}
\end{exm}

\begin{lem} \label{lem:autroh1}
Let $(\calO,p)$ be a paritized quaternion $R$-order.  Then 
\[ \Aut_R(\calO,p) \simeq \Aut_R(\calO) \times \{\pm 1\}. \]
\end{lem}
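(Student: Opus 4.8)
The plan is to unwind the definition of a morphism of paritized orders and reduce everything to bookkeeping with invertible modules. An element of $\Aut_R(\calO,p)$ is a pair $(\phi,\psi)$ consisting of an $R$-algebra automorphism $\phi\in\Aut_R(\calO)$ together with an automorphism $\psi$ of the invertible module $P$ (where $p\colon\tbigwedge^4\calO\xrightarrow{\sim}P^{\otimes 2}\otimes L$ is the fixed parity factorization) such that the square defining an isomorphism of parity factorizations commutes, i.e.
\[
p\circ(\tbigwedge^4\phi)=(\psi^{\otimes 2}\otimes\id_L)\circ p\colon \tbigwedge^4\calO\to P^{\otimes 2}\otimes L .
\]
Since $P$ is invertible, $\End_R(P)=R$ canonically, so $\psi=u\cdot\id_P$ for a unique $u\in R^\times$, and under this identification $\psi\mapsto\psi^{\otimes 2}$ is $u\mapsto u^2$. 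The group law on $\Aut_R(\calO,p)$ is componentwise composition; the maps $(\phi,\psi)\mapsto\phi$ and $(\phi,\psi)\mapsto u$ are then group homomorphisms, with evident splittings $\phi\mapsto(\phi,\id_P)$ and $u\mapsto(\id_\calO,u\cdot\id_P)$, so the whole statement follows once I show that the constraint above is equivalent to $u^2=1$.

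The crux is therefore to prove that $\tbigwedge^4\phi=\id$ on $\tbigwedge^4\calO$ for every $\phi\in\Aut_R(\calO)$. Granting this, the constraint reads $p=(\psi^{\otimes 2}\otimes\id_L)\circ p$, hence $\psi^{\otimes 2}=\id_{P^{\otimes 2}}$, i.e. $u^2=1$, which in the domain $R$ forces $u\in\{\pm1\}$ (with the convention $\{\pm1\}=\{1\}$ in characteristic $2$, since $x^2-1=(x-1)^2$ there). To establish $\tbigwedge^4\phi=\id$ I would extend scalars: $\phi\otimes_R F$ is an $F$-algebra automorphism of $B\colonequals\calO\otimes_R F$, hence inner by the Skolem--Noether theorem, say conjugation by $\alpha\in B^\times$. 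Conjugation by $\alpha$ is the composite of left multiplication by $\alpha$ and right multiplication by $\alpha^{-1}$ on the four-dimensional $F$-vector space $B$, with determinants $\nrd(\alpha)^2$ and $\nrd(\alpha)^{-2}$, so $\det(\phi\otimes F)=1$ and $\tbigwedge^4(\phi\otimes F)=\id$ on $\tbigwedge^4_F B$. Finally $\tbigwedge^4_R\calO$ is projective, hence torsion-free over the domain $R$, so the canonical map $\tbigwedge^4_R\calO\hookrightarrow\tbigwedge^4_R\calO\otimes_R F=\tbigwedge^4_F B$ is injective and intertwines $\tbigwedge^4\phi$ with $\tbigwedge^4(\phi\otimes F)=\id$; therefore $\tbigwedge^4\phi=\id$.

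The main obstacle is precisely this last point: everything rests on an $R$-algebra automorphism of $\calO$ acting trivially on the top exterior power, and the cleanest route is to lift to the central simple algebra $B$ over $F$, apply Skolem--Noether, and compute the determinant of a conjugation via left/right multiplication operators, descending back by torsion-freeness. (Alternatively, one could use the canonical identification $\tbigwedge^4\calO\cong\tbigwedge^3(\calO/R)$ together with the fact, visible from the exact sequence \eqref{eqn:blang}, that conjugation by $B^\times$ lands in $\SO(\nrd|_{B^0})$; but the multiplication-operator argument is uniform in the characteristic.) Everything else—the identification $\Aut_R(P)=R^\times$, the formula $\psi^{\otimes 2}\leftrightarrow u^2$, and the assembly of the splittings into the product decomposition—is routine.
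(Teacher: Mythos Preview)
Your proof is correct, and it takes a genuinely different route from the paper in the key step. Both arguments reduce to understanding the action of $\phi\in\Aut_R(\calO)$ on $\tbigwedge^4\calO$. The paper localizes, chooses a good basis, and invokes the explicit description $\phi=\adj(\rho)$ from the even Clifford correspondence \cite[22.3.15]{Voight:quatbook} to obtain $\det\phi=(\det\rho)^2$ with $\rho\in\GL_3(R)$ the induced action on the ternary module. You instead extend scalars to $F$, apply Skolem--Noether to realize $\phi\otimes F$ as conjugation by some $\alpha\in B^\times$, and compute $\det(\text{conj}_\alpha)=\nrd(\alpha)^2\cdot\nrd(\alpha)^{-2}=1$ via the left/right multiplication operators. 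Your approach is more self-contained---it needs neither the ternary--quaternion dictionary nor a good basis---and it delivers $\tbigwedge^4\phi=\id$ outright, so the splitting $\phi\mapsto(\phi,\id_P)$ and hence the product decomposition are immediate. The paper's argument, as literally written, shows only that $\det\phi$ is a square and so identifies the fiber over $\phi$ as a $\{\pm1\}$-coset; the product structure is then implicit (it follows once one notes that $\rho$ is an isometry, so $\det\rho=\pm1$ and $\det\phi=1$). What the paper's route buys is that it makes transparent how the extra $\{\pm1\}$ is exactly the sign ambiguity $\rho\leftrightarrow-\rho$ in recovering the ternary action from $\phi$, which ties the lemma directly back into the equivalence of categories of \Cref{thm:invassocfunc}.
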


\begin{proof}
By definition, an automorphism of $(\calO,p)$ is a pair of automorphisms $(\phi,h)$ with $\phi \in \Aut_R(\calO)$ and $h \in \Aut_R(P)$ such that the diagram
\[ 
\xymatrix{
\tbigwedge^4 \calO \ar[r]^(0.40){p} \ar[d]_{\wedge^4 \phi}^{\wr} & P^{\otimes 2} \otimes L \ar[d]^{\wr} \\
\tbigwedge^4 \calO  \ar[r]^(0.40){p} & P^{\otimes 2} \otimes L
} \]
commutes.  The choice $\phi=\id_\calO$ and $h=-1$ gives us an automorphism we denote by $-1$.  Since $L$ is fixed, it plays no role in this part.

We claim that if $\phi \in \Aut_R(\calO)$ is an $R$-algebra isomorphism, then there is a unique $h \in \Aut_R(P)$ up to $-1$ such that $(\phi,h) \in \Aut_R(\calO,p)$.  We will prove this over each localization $R_{(\frakp)}$, including $R_{(0)}=F$: once we choose such an $h$ over $F$, it follows that $\pm h \in \Aut_{R_{(\frakp)}}(P_{(\frakp)})$ and hence by intersecting $h \in \Aut_R(P)$.  
So we may suppose $\calO$ is free with good basis and $P \simeq R$, and $\tbigwedge^4 \phi$ acts by $\det \phi$; we want to show that $\det \phi=h^2$ is a square.  This follows from \cite[22.3.15]{Voight:quatbook}: we have $\phi=\adj(\rho)$ where $\rho \in \GL_3(R)$ is the action on the ternary quadratic module $M \simeq R^3$ associated to $\calO$, and $\det \phi = \det \adj(\rho) = (\det \rho)^2$.  
\end{proof}

\begin{exm}
We recall the twist construction employed in \eqref{eqn:codomain2} that we now define in the above terms.  Let $(\calO,p)$ be a paritized quaternion $R$-order, with the parity factorization 
\[ p \colon \tbigwedge^4\,\calO \xrightarrow{\sim} P^{\otimes 2} \otimes L. \]
Let $\psi_{\calO,p}:\tbigwedge^2(\calO/R) \to \tbigwedge^4\,\calO$ be the canonical exterior form.  We then define the quadratic module 
\begin{equation} 
\psi_{\calO,p} \colonequals p \circ \psi_\calO \otimes P\spcheck\colon \tbigwedge^2(\calO/R) \otimes P\spcheck \to L
\end{equation}
where $p$ induces an isomorphism $\tbigwedge^4\,\calO \otimes (P\spcheck)^{\otimes 2} \simeq L$.
\end{exm}

We now have the following theorem.

\begin{thm} \label{thm:invassocfunc}
The associations
\begin{align*}
Q &\mapsto (\Clf^0(Q),p_{Q}) \\
\psi_{\calO,p} &\mapsfrom (\calO,p)
\end{align*}
are functorial and provide a discriminant-preserving equivalence of categories between
\begin{center}
\emph{nondegenerate ternary quadratic modules over $R$ \\ under isometries}
\end{center}
and
\begin{center}
\emph{paritized projective quaternion $R$-orders under isomorphisms}
\end{center}
that is functorial in $R$.
\end{thm}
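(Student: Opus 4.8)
The plan is to take \Cref{mthm:bigthmcliffisom} as the backbone and promote its object-level bijection to a genuine equivalence of categories now that the parity data is available. Concretely I would establish three things: (i) that $Q \mapsto (\Clf^0(Q),p_Q)$ and $(\calO,p)\mapsto \psi_{\calO,p}$ are honest functors between the two categories in the statement; (ii) that there is a natural isometry $\psi_{\Clf^0(Q),p_Q} \xrightarrow{\sim} Q$ and a natural isomorphism $(\Clf^0(\psi_{\calO,p}),p_{\psi_{\calO,p}}) \xrightarrow{\sim} (\calO,p)$ of paritized orders; and then (iii) conclude that the two functors are quasi-inverse. Discriminant-preservation and functoriality in $R$ would then be inherited from \Cref{mthm:bigthmcliffisom}, since the parity factorization $p_Q$ of \eqref{eqn:pQDef} is assembled (as in \eqref{eqn:codomain2} and \Cref{lem:wedgewedge}) from canonical multilinear-algebra isomorphisms that are themselves functorial in $R$.

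For (i), given an isometry $f \colon Q \to Q'$ between nondegenerate ternary quadratic modules $Q \colon M \to L$ and $Q' \colon M' \to L$ with the \emph{same} target $L$ (similitude factor $1$), I would note that \Cref{thm:functorial} gives an $R$-algebra isomorphism $\Clf^0(f)$ and that $\tbigwedge^3 f$ gives an isomorphism of the parity modules $P = \tbigwedge^3 M \otimes (L\spcheck)^{\otimes 2}$, then check that the square relating $p_Q$, $p_{Q'}$, $\tbigwedge^4\Clf^0(f)$ and $(\tbigwedge^3 f)^{\otimes 2}\otimes\id_L$ commutes---this is immediate because $p_Q$ is built from isomorphisms natural in $(M,L)$. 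So $f \mapsto (\Clf^0(f),\tbigwedge^3 f)$ is a morphism of paritized orders, giving a functor $\mathcal F$. In the reverse direction, an isomorphism $(\phi,h)$ of paritized orders induces, via $\tbigwedge^2(\phi \bmod R)$ on $\tbigwedge^2(\calO/R)$ and $h$ on $P$, a similarity between $\psi_{\calO,p}$ and $\psi_{\calO',p'}$; the commuting square in the definition of a paritized isomorphism pins $L$ down on the nose, so this similarity has similitude factor $1$---it is an isometry. This yields the reverse functor $\mathcal G$.

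For (ii), I would recall that in the proof of \Cref{mthm:bigthmcliffisom} the comparison of $Q$ with $\psi_{\Clf^0(Q)}$ was realized by twisting by precisely $P\spcheck$, with $P = \tbigwedge^3 M \otimes (L\spcheck)^{\otimes 2}$; but the canonical exterior form $\psi_{\calO,p}$ of a paritized order is defined with exactly this twist built in, so composing the two constructions returns $Q$ on the nose and the canonical similarity of \Cref{mthm:bigthmcliffisom} upgrades to a canonical isometry $\psi_{\Clf^0(Q),p_Q} \xrightarrow{\sim} Q$. Naturality in $Q$ reduces, exactly as there, to base changing to $F$, choosing a good basis, and invoking the explicit computation in \Cref{p:canonextfree}, after which one checks the naturality square for an arbitrary isometry. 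The reverse composite is symmetric: $\Clf^0(\psi_{\calO,p}) \simeq \calO$ by \Cref{p:canonextfree} (base changed and glued along a good pseudobasis), and one checks that the parity factorization \eqref{eqn:pQDef} attached to $\psi_{\calO,p}$ matches the given $p$ under this isomorphism, again by canonicity.

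I expect the genuine obstacle to be concentrated in a single point, the faithfulness of $\mathcal F$: one must see that $-1 \in \OO(Q)(R)$ is sent by $\mathcal F$ to the canonical ``$-1$'' automorphism of $(\calO,p)$ from \Cref{lem:autroh1} and not to the identity. Here $\Clf^0(-1) = \id_\calO$, since multiplication in the even Clifford algebra is unchanged by $v \mapsto -v$, so the nontriviality must live entirely in the parity component $\tbigwedge^3(-1) = -\id_P$, which is odd because $\dim_F M = 3$; this is exactly the phenomenon flagged earlier in the appendix (the isometry $-1$ acting nontrivially on the odd Clifford module), and \Cref{lem:autroh1} is what guarantees there is nothing further to track. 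Granting this, full faithfulness is formal: by \Cref{mthm:bigthmcliffisom} the functor is a bijection on isomorphism classes, so $\Hom(Q,Q')$ and $\Hom(\mathcal F Q,\mathcal F Q')$ are simultaneously empty or nonempty; when nonempty they are torsors under $\OO(Q)(R) \simeq \{\pm1\}\times\SO(Q)(R)$ and $\Aut_R(\calO,p) \simeq \Aut_R(\calO)\times\{\pm1\}$, and $\mathcal F$ matches these torsor structures, using Skolem--Noether to identify $\SO(Q)(R)\simeq N_{B^\times}(\calO)/F^\times$ with $\Aut_R(\calO)$ and the computation above to identify the two copies of $\{\pm1\}$. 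At that point $\mathcal F$ is an equivalence with quasi-inverse $\mathcal G$, which is the assertion.
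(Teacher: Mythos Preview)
Your proposal is correct and follows essentially the same route as the paper: establish that both associations are functors, then show each composite is naturally isomorphic to the identity by unwinding the canonical multilinear-algebra isomorphisms of \Cref{lem:wedgewedge} and \eqref{eqn:domain1}--\eqref{eqn:codomain2}, reducing the algebra verification to the free case via base change to $F$ and \Cref{p:canonextfree}. Your separate faithfulness discussion via $-1$ and the torsor argument from \Cref{lem:autroh1} is strictly redundant once the natural isomorphisms in (ii) are in hand---naturality of $\mathcal G\mathcal F\simeq\id$ and $\mathcal F\mathcal G\simeq\id$ already forces full faithfulness---but it does make explicit where the parity datum earns its keep, something the paper leaves implicit in the word ``canonical.''
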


\begin{proof}
First, we show the associations are functorial.  If $Q\colon M \to L$ and $Q' \colon M' \to L$ are isometric (nondegenerate ternary) quadratic modules under $f\colon M \to M'$, then by functoriality of the even Clifford algebra, this induces an isomorphism $f\colon \calO \simeq \calO'$ and thereby an isomorphism 
\[ P=\tbigwedge^3 M \otimes (L\spcheck)^{\otimes 2} \simeq P' = \tbigwedge^3 M' \otimes (L\spcheck)^{\otimes 2} \] 
and then of parity factorizations
\begin{equation}
\begin{gathered}
\xymatrix{
\tbigwedge^4\,\calO \ar[r]^(0.40){p_{Q}} \ar[d]^{\wr} & P^{\otimes 2} \otimes L \ar[d]^{\wr} \\
\tbigwedge^4\,\calO' \ar[r]^(0.37){p_{Q'}} & P'^{\otimes 2} \otimes L
}. 
\end{gathered}
\end{equation}
Conversely, if $(\calO,p)$ and $(\calO',p')$ are isomorphic paritized quaternion $R$-orders under $\phi \colon \calO \to \calO'$ and $P \simeq P'$, then we get an isometry 
\begin{equation}
\begin{gathered}
\xymatrix{
\tbigwedge^2 \calO/R \otimes P\spcheck \ar[r]^(0.7){\psi_{\calO,p}} \ar[d]^{\wr} & L \ar@{=}[d] \\
\tbigwedge^2 \calO'/R \otimes (P')\spcheck \ar[r]^(0.7){\psi_{\calO',p'}} & L
} 
\end{gathered}
\end{equation}
by \Cref{prop:extcanfunct} (see \eqref{eqn:wedge4makesitgo}): the similitude factor is the identity by construction.  

We now tackle the two compositions and show they are each naturally isomorphic to the identity.  Let $(\calO,p)$ be a paritized quaternion $R$-order.  We first associate $(\psi_{\calO,p},\id)$, and let $M=\tbigwedge^2 \calO/R \otimes P\spcheck$ be the domain of $\psi_{\calO,p}$.  We then associate its even Clifford algebra.  As $R$-modules, we have canonical isomorphisms
\begin{equation} \label{eq:firstclif}
\begin{aligned} 
\Clf^0(\psi_{\calO,p}) &= R \oplus \tbigwedge^2 M \otimes L\spcheck
=R \oplus \tbigwedge^2(\calO/R \otimes P\spcheck) \otimes L\spcheck \\
&\simeq R \oplus \tbigwedge^2(\calO/R) \otimes (P^{\otimes 2} \otimes L)\spcheck \\
&\simeq R \oplus \calO/R \otimes \tbigwedge^3(\calO/R) \otimes (P^{\otimes 2} \otimes L)\spcheck \\
&\simeq R \oplus \calO/R
\end{aligned}
\end{equation}
where we have used \eqref{eqn:wedgewedge2} and in the last step we used the parity factorization $p$ giving a natural isomorphism of the last piece to $R$.  To check that the corresponding map is an $R$-algebra homomorphism, by functoriality we can do so over $F$, and we suppose that $B$ is given by a good basis, and then the verification is as in \eqref{p:canonextfree}.  To finish, we show that the parity factorization is also canonically identified: we have
\begin{align*} 
\tbigwedge^3 M \otimes (L\spcheck)^{\otimes 2} &= \tbigwedge^3 (\tbigwedge^2(\calO/R) \otimes P\spcheck) \otimes (L\spcheck)^{\otimes 2} \\
&\simeq \tbigwedge^3 (\tbigwedge^3 \calO/R)^{\otimes 2} \otimes (P\spcheck)^{\otimes 3} \otimes (L\spcheck)^{\otimes 2} \simeq P
\end{align*}
where now we use \eqref{eqn:wedgewedge1} and then again (twice) the parity factorization.  Therefore we have a natural isomorphism of parity factorizations
\begin{equation}
\begin{gathered}
\xymatrix{
\tbigwedge^4 \Clf^0(\psi_{\calO,p}) \ar[r]^(0.45){p_{\psi_{\calO,p}}} \ar[d]^{\wr} & (\tbigwedge^3 M \otimes L\spcheck)^{\otimes 2} \otimes L \ar[d]^{\wr} \\
\tbigwedge^4\,\calO \ar[r]^{p} & P^{\otimes 2} \otimes L
} 
\end{gathered}
\end{equation}
This completes the verification that the composition in this order is naturally isomorphic to the identity.  

Now for the second composition.  Let $Q\colon M \to L$ be a (nondegenerate ternary) quadratic module over $R$.  We associate $\calO=\Clf^0 Q$ and $p_{Q}$ its parity factorization, and then $\psi_{\calO,p_Q}$.  In \eqref{eqn:tbigwedgealign}, using \eqref{eqn:domain1}--\eqref{eqn:codomain2}, we showed that we had an natural isometry
\begin{equation} \label{eqn:thebigisometry}
\begin{gathered}
\xymatrix{
\tbigwedge^2 \calO/R \otimes P\spcheck \ar[r]^(0.7){\psi_{\calO,p_Q}} \ar[d]_f^{\wr} & L \ar@{=}[d] \\
M \ar[r]^{Q} & L
} 
\end{gathered}
\end{equation}
and this completes the proof.  
\end{proof}

\end{document}